\newtheorem{theorem}{Theorem}[section]
\newtheorem{lemma}[theorem]{Lemma}
\newtheorem{proposition}[theorem]{Proposition}
\newtheorem{corollary}[theorem]{Corollary}
\newtheorem{remark}[theorem]{Remark}
\newtheorem{definition}[theorem]{Definition}
\newtheorem*{claim*}{Claim}
\newfont\bbf{msbm10 at 12pt}
\def\eps{\varepsilon}
\def\N{{\mathbb N}}
\def\J{{\mathcal J}}
\def\R{{\mathcal R}}
\def\blue{\color{blue}}
\def\red{\color{red}}
\def\supp{\mbox{\rm supp}}
\def\diam{\mbox{\rm diam} }
\def\sgn{\mbox{\rm sgn} }
\def\le{\leqslant}
\def\ge{\geqslant}
\def\per{\mathrm{Per}}
\def\1{ {\hbox{{\it 1}} \!\! I} }
\def\Int{\mathrm{Int}}
\def\CP{\overline{CP}}
\newcommand{\fix}{\mathrm{Fix}}
\def\gh{\hat{g}}
\begin{document}

\title[Interval maps with dense periodicity]
{Interval maps with dense periodicity}
\author{Jozef Bobok}
\author{Jernej \v Cin\v c}
\author{Piotr Oprocha}
\author{Serge Troubetzkoy}

\date{\today}

	\begin{abstract}
We consider the class of interval maps with dense set of periodic points $CP$ and its closure $\overline{CP}$ equipped with the metric of uniform convergence. Besides studying basic topological properties and density results in the spaces $CP$ and $\overline{CP}$ we prove that $\overline{CP}$ is dynamically characterized as the set of interval maps for which every point is chain-recurrent. Furthermore, we prove that a strong topological expansion property called topological exactness (or leo property) is attained on the open dense set of maps in $CP$ and on a residual set in $\CP$. Moreover, we show that every second category set in $CP$ and $\CP$ is rich in a sense that it contains uncountably many conjugacy classes. 
 An analogous conclusion also holds in the setting of interval maps preserving any fixed non-atomic probability measure with full support.
Finally, we give a detailed description of the structure of periodic points of generic maps in $CP$ and $\CP$ and show that generic maps in  $CP$ and $\CP$ satisfy the shadowing property.
	\end{abstract}

\address[J.\ Bobok]{Department of Mathematics of FCE, Czech Technical University in Prague,
Th\'akurova 7, 166 29 Prague 6, Czech Republic}
\email{jozef.bobok@cvut.cz}

\address[J.\ \v{C}in\v{c}]{University of Maribor, Koro\v ska 160, 2000 Maribor, Slovenia
	-- and --
	Abdus Salam International Centre for Theoretical Physics (ICTP), Trieste, Italy}
\email{jernej.cinc@um.si}

\address[P.\ Oprocha]{AGH University of Krakow, Faculty of Applied Mathematics,
al.\ Mickiewicza 30, 30-059 Krak\'ow, Poland. -- and --
Centre of Excellence IT4Innovations - Institute for Research and Applications of Fuzzy Modeling, University of Ostrava, 30. dubna 22, 701 03 Ostrava 1, Czech Republic}
\email{oprocha@agh.edu.pl}

\address[S.\ Troubetzkoy]{Aix Marseille Univ, CNRS, I2M, Marseille, France}
\email{serge.troubetzkoy@univ-amu.fr}

\subjclass[2020]{37E05, 37B40, 46B25}
\keywords{Interval map, dense set of periodic points,  conjugacy classes, topologically exact, leo, chain recurrent, topological entropy, shadowing}

\maketitle
\section{Introduction}
Many definitions of chaos in a compact metric space $X$ involve an assumption that the set of periodic points is dense in $X$.  In the current article we put ourselves in the context of $X$ being a unit interval and study dynamical, topological and dimension properties  of the set of interval maps with dense set of periodic points $CP$ and its closure $\CP$ using the metric of uniform convergence. Our line of study can be thought as a continuation of Barge and Martin's study \cite{BM}, where the authors gave a nice dynamical characterization of the maps from $CP$. 

Let $I:=[0,1]$ and let $C(I)$ denote the set of all continuous interval maps. We call $f\in C(I)$ a chain recurrent map if $f$ is chain recurrent in every point of $I$. The notion of a chain recurrent map is one of the standard notions in topological dynamics. In \cite{BC} Block and Coven provided a dynamical description of chain recurrent maps. In complement to these results, 
our first main result shows that a map $f \in C(I)$ is chain-recurrent if and only if $f\in \overline{CP}$ (Theorem~\ref{ChainRecurrentThenCPClosure}). 
Theorem~\ref{ChainRecurrentThenCPClosure} implies that the maps $\mathrm{id}$ and up to conjugacy the map $1-\mathrm{id}$  are the only maps with zero topological entropy from $\overline{CP}$ (Corollary~\ref{cor:turbulence}). A map $f\in C(I)$ is called \textit{turbulent} if there exist nondegenerate closed intervals $J, K \subset I$ with at most one point in common, such that $J\cup K\subset f(J)\cap f(K)$.

 Note that Barge and Martin in \cite{BM} proved that if $f$ has a dense set of periodic points, then either $f^2$ is the identity map or $f^2$ is turbulent.
Theorem~\ref{ChainRecurrentThenCPClosure} together with results from \cite{BC} also implies that if $f\in \overline{CP}$ such that $h_{\mathrm{top}}(f)>0$ then $f^2$ is turbulent. Therefore
$h_{top}(f)\geq\log{2}/2$ for every map
$f \in \CP$ except for the $id$ or maps conjugate to $1-id$.

Our second main result is that 
$CP$ and $\overline{CP}$ are uniformly locally arcwise connected (see the beginning of Subsection~\ref{subsec:locallyconnected} and Theorem \ref{thm1}). In particular, uniform local arcwise connectedness implies local connectedness. 
In the same section we also prove that the set of Lebesgue measure-preserving interval maps $C_{\lambda}$ is  arcwise connected (Corollary~\ref{cor:ac}), 
which, in particular, answers on a question of Micha\l\/  Misiurewicz asked during a conference in Barcelona in 2020. These results fit in the context of study initiated by Kolyada, Misiurewicz and Snoha \cite{KMS} who proved that the spaces of transitive, piecewise linear transitive and piecewise monotone transitive interval maps are locally connected and arcwise connected.  Some of these results were recently strengthened by He, Li and Yang in \cite{HLY}.

Next we turn our attention towards studying dynamical properties which hold on dense sets in the sets $CP$ and $\CP$. We prove another fundamental result that locally eventually onto (or leo for short, sometimes also called topologically exact) maps form an open dense collection of maps in $CP$ (Theorem~\ref{thm:leo}). This, in particular, gives another aspect to the dynamical characterization of maps from $CP$ from the paper of Barge and Martin \cite{BM}. Theorem~\ref{thm:leo} implies that there is a residual (but not open) set of leo maps in $\CP$ as well. To put these results in the context, it was proven by the first and the last author in \cite{BT} that generic interval maps of $C_{\lambda}$ are also leo. The proof of Theorem~\ref{thm:leo} also yields that leo maps are open and dense in $C_{\lambda}$.  As an application we get that maps satisfying the periodic specification property form an open dense collection in $CP$ (Corollary~\ref{cor:psp}).

 The study of the group of homeomorphisms and conjugacy class dates back to Rokhlin.
 Let $\mathcal{X}$ be a standard Borel space and $\nu$ a non-atomic probablity measure on $\mathcal{X}$.
Let $\mathrm{Aut}(\nu)$ denote the topological group of all measurable, measure-preserving bijections of $\mathcal{X}$.
A classical theorem of Rokhlin  says that the conjugacy classes on $\mathrm{Aut}(\nu)$ are meager (see \cite[Theorem 2.5]{Kech}). Moreover, Foreman and Weiss \cite{FW} proved that equivalence relation of conjugacy among the generic element in $\mathrm{Aut}(\nu)$ is non-classifiable. This line of research culminated recently in the paper of Solecki \cite{Sol} who answered the question of Glasner and Weiss about the structure of the closure of subgroups of $\mathrm{Aut}(\nu)$ generated by iterates of elements $T\in \mathrm{Aut}(\nu)$.

 We study analogous questions in the setting of interval map where the conjugating maps are homeomorphisms. For $f \in C(I)$ we consider its {\it conjugacy class}  defined by 
$$\{\psi^{-1} \circ f \circ \psi \in C(I) : \psi \text{ is a homeomorphism of } I\}.$$ 
Before studying other typical properties of the set $\CP$ we ask ourselves a fundamental question motivated by the preceding paragraph about the number of conjugacy classes in the residual sets in $\CP$. A beautiful result of Kechris and Rosendal \cite{KR} shows that there is a residual set in the space of all homeomorphisms of the Cantor set such that each pair of its elements are conjugate (so-called \textit{residual conjugacy class}). Moreover, Bernardes and Darji \cite{BD} proved there is a residual conjugacy class in the space of all self-maps of the Cantor set. In the case of $\CP$ all the standard invariants (entropy (see Proposition~\ref{prop:entropyinfty}), mixing,...) do not exclude the possibility that such a class exists.

Our fourth main result, Theorem~\ref{conj-nowhere}, is that every conjugacy class in $\CP$ is nowhere dense.
This implies that a residual conjugacy class cannot exist in $\CP$, it also implies that 
every second category set $G\subset \CP$ contains uncountably many conjugacy classes (Corollary~\ref{cor:conjugacyclasses}).   An analogous conclusion also holds in the setting of interval maps preserving any fixed non-atomic probability measure with full support and in $CP$.
Corollary~\ref{cor:conjugacyclasses} also 
has an application to the results in \cite{CO}; it
implies that there are uncountably many dynamically non-equivalent attractors in the neighborhood of any attractor 
in the construction of a
parameterized family of planar homeomorphisms with pseudo-arc attractors in \cite{CO}.

Finally we study typical properties of maps from  $CP$ as well as maps from $\CP$. A result in this direction was given in the literature recently in \cite{CO}, where the second and third author proved that a very strong condition on the values of interval maps called the $\delta$-crookedness is satisfied for some iterate of any generic map from $\CP$. This result can be interpreted through the notion of inverse limits; namely, the inverse limit of a generic map in $\CP$ is the pseudo-arc (see \cite[Theorem 1.3]{CO}). Similarly as in the context of $C_{\lambda}$ in \cite{BCOT} we obtain that the shadowing property is typical in $CP$ and $\CP$. Furthermore, we prove that these results are optimal in $CP$ and $\CP$ since there is a dense set of maps in $CP$ resp. $\CP$ which do not have the shadowing property (see Theorem~\ref{thm:shadowing}). We also provide a study of the structure and dimension of periodic points of generic maps in  $CP$ and $\CP$ (related results in the setting of interval continuous map have been given by Agronsky, Bruckner and Laczkovicz \cite{ABL}). Namely, in Theorem~\ref{thm:periodicpoints} we completely describe the structure of fixed points, periodic points and the union of all periodic points as well as their Hausdorff, upper box and lower box dimension. Finally, we also show in Theorem~\ref{t-PP} that the set of leo maps in $CP$ whose periodic points have full Lebesgue measure and whose periodic points of period $k$ have positive measure for each $k \ge 1$ is dense in $CP$ to give a counterpart of Theorem~\ref{thm:periodicpoints}. 

Let us give a brief outline of the paper. In Section~\ref{subsec:preliminaries} we start with introducing the basic notions that are used throughout the article.
In Section~\ref{sec:characterizationCP} we prove Theorem~\ref{ChainRecurrentThenCPClosure} which gives a dynamical characterization of $\CP$.
We start Section~\ref{sec:structure} with Subsection~\ref{subsec:basicproperties} where we prove structure statements that $CP$ is a residual  subset of $\CP$ and that $\CP\setminus CP$ is dense in $\CP$. We continue with Subsection~\ref{subsec:locallyconnected} where we prove the results about topological structure of spaces $CP$, $\CP$ and $C_{\lambda}$, namely Theorem~\ref{thm:ac}, Corollary~\ref{cor:ac} and Theorem~\ref{thm1}. In Section~\ref{sec:densness} we address denseness properties of $CP$ and $\CP$. First, in Subsection~\ref{subsec:leo} we prove in Theorem~\ref{thm:leo} that the set of leo maps is open and dense in $CP$ and then in Subsection~\ref{subsec:conjugacyclasses} we show through Corollary~\ref{cor:conjugacyclasses} the result about conjugacy classes in $\CP$. In Subsection~\ref{subsec:windowperturbations} we introduce the technique called window perturbations which is used in the rest of the article. We finish the article with Subsections~\ref{subsec:shadowing} and \ref{subsec:periodicpoints} where we prove that shadowing is a typical property in $\CP$ (Theorem~\ref{thm:shadowing}) and describe the structure and dimension of periodic points of generic maps in $\CP$ (Theorem~\ref{thm:periodicpoints}).

\section{Preliminaries}\label{subsec:preliminaries}

Let $I := [0,1]$ denote the unit interval, $d$ denote the 
Euclidean distance on $I$, and  $C(I)$ denote the set of all surjective continuous interval maps.  A point $x$ is called \emph{periodic of period $N\in \mathbb{N}$}, if $f^N(x)=x$ and $f^i(x)\neq x$ for $1\leq i< N$. Periodic points of $f\in C(I)$ of period $N$ are denoted by $\per(f,N)$ and the set of periodic points of $f\in C(I)$ by $\per(f)$. Let $\lambda$ denote the Lebesgue measure on the unit interval $I:=[0,1]$. We denote by $C_{\lambda}\subset C(I)$ the family of all continuous Lebesgue measure-preserving functions of $I$  and more generally by $C_{\mu}\subset C(I)$ the family of all continuous interval maps preserving a non-atomic probability measure $\mu$ on $I$ with full support. For $\mu$ a non-atomic probability measure on $I$ with full support the map $\psi\colon~I\to I$ defined as 

$$\psi(x)=\mu([0,x])$$ 

is a homeomorphism of $I$; moreover, if $f$ preserves $\mu$ then $\psi\circ f\circ \psi^{-1}\in C_{\lambda}$ (see Remark in \cite{BCOT}). 

\begin{remark}\label{rem:psihomeo}
 The map $f\mapsto \psi\circ f\circ \psi^{-1}$ is a homeomorphism of $C_{\mu}$ with $C_{\lambda}$.
\end{remark}

Let $CP \subset C(I)$ denote the set of continuous maps with a dense set of periodic points, i.e., all maps $f\in C(I)$ such that the closure $\overline{\per(f)}=I$.
We consider the uniform metric on $C(I)$ defined by 
$$\rho (f,g) := \sup_{x \in [0,1]} |f(x) - g(x)|.$$

Let $B(x,\xi)$  denote the open ball of radius $\xi$ centered at the point $x$ in a metric space $X$ and for a set $U\subset X$ we shall denote 
$$B(U,\xi):=\bigcup_{x\in U}B(x,\xi).$$

We call a set containing a dense $G_{\delta}$ set {\em residual} and call a property {\em generic} if it is attained on at least a residual set of the Baire space on which we work.

 \begin{definition}
 We say a map $f\in C(I)$ is
\begin{itemize}
	\item {\em transitive} if for all nonempty open sets $U,V\subset I$ there exists $n\ge 0$ so that $f^n(U)\cap V\neq\emptyset$,
	\item {\em topologically mixing} if for all nonempty open sets $U,V\subset I$ there exists $n_0\geq0$ so that $f^n(U)\cap V\neq\emptyset$ for every $n\ge
	n_0$,
	\item   {\em locally eventually onto (leo)} (also known as {\em topologically exact}) if for every nonempty open set $U\subset I$ there exists $n\in{\mathbb N}$ so that $f^n(U)=I$.
\end{itemize}
\end{definition}

\section{A dynamical characterization of $\overline{CP}$}\label{sec:characterizationCP}
 For $f \in C(I)$, $x, y \in I$ and $\eps>0$ an \textit{$\eps$-chain} from $x$ to $y$ is a finite sequence
$x = x_0,\ldots, x_n = y$ where $n>0$ and $|f(x_i)-x_{i+1}| < \eps$ for $0<i< n-1$. We say
that $x$ is \textit{chain-recurrent (for $f$)} if for every $\eps>0$ there is an $\eps$-chain from $x$ to itself. We will call $f$ {\it chain-recurrent} if every point in $I$ is chain-recurrent for $f$. A point $x\in I$ is called {\it non-wandering} if for any open set $U$ containing $x$  there exists $n>0$ such that $f^n(U)\cap U\neq \emptyset$.  It follows from the definitions that every non-wandering point is chain-recurrent and that any chain-recurrent map is onto.  

\begin{theorem}\label{ChainRecurrentThenCPClosure}
A map $f \in C(I)$ is chain-recurrent if and only if $f\in \overline{CP}$.
\end{theorem}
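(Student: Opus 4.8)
The plan is to prove the two implications separately, with the forward direction ($f\in\overline{CP}\Rightarrow f$ chain-recurrent) being the easy one and the converse being where all the work lies.

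For the easy direction, suppose $f\in\overline{CP}$ and fix $x\in I$ and $\eps>0$. Choose $g\in CP$ with $\rho(f,g)<\eps/2$. Since periodic points of $g$ are dense, pick a periodic point $p$ of $g$ within $\eps/2$ of $x$; its $g$-orbit $p=p_0,p_1,\dots,p_N=p$ satisfies $g(p_i)=p_{i+1}$, hence $|f(p_i)-p_{i+1}|<\eps/2$. Prepending $x_0=x$ (note $|f(x)-p_1|$ need not be small, but we may instead insert $x$ at both ends using that the definition of an $\eps$-chain only constrains interior indices, or simply perturb: $|f(x)-f(p_0)|$ is controlled once $x$ is close to $p_0$ and $f$ is uniformly continuous — shrink the neighbourhood of $x$ accordingly) yields an $\eps$-chain from $x$ to $x$. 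So every point is chain-recurrent.

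For the converse, suppose $f$ is chain-recurrent; I must produce, for each $\eps>0$, a map $g\in CP$ (better yet, just a map with a periodic point in every $\eps$-ball, then iterate/diagonalize) with $\rho(f,g)<\eps$. The natural strategy: since $f$ is chain-recurrent, for a fine-enough partition $0=a_0<a_1<\dots<a_m=1$ with mesh $<\delta$, the "discretized" dynamics on the partition points forms a graph in which every vertex lies on a cycle (this is exactly the combinatorial content of $\delta$-chain recurrence, and is the place to invoke the Block--Coven \cite{BC} description if convenient). One then builds $g$ piecewise linearly (or piecewise affine on a refinement) so that $g$ follows these cycles: on each interval $[a_i,a_{i+1}]$ set $g$ close to $f$ but redirect endpoints to exactly hit partition points, arranging that the induced interval map has each $[a_i,a_{i+1}]$ eventually periodic under the symbolic cycle structure, which forces a genuine periodic point in each partition interval by the intermediate value theorem (a covering interval $J$ with $g^k(J)\supseteq J$ contains a fixed point of $g^k$). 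Uniform continuity of $f$ guarantees that for $\delta$ small the modification moves values by less than $\eps$, giving $\rho(f,g)<\eps$. Finally, letting $\eps\to 0$ exhibits $f$ as a limit of maps in $CP$, hence $f\in\overline{CP}$.

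The main obstacle is the converse construction: one must turn a $\delta$-chain through the partition into an \emph{honest} orbit segment of a nearby continuous map while keeping the $C^0$-perturbation small and while ensuring the resulting map genuinely has dense periodic points (or at least a periodic point arbitrarily near every point — then take a countable dense set of target points, handle them on finer and finer scales, and diagonalize). The delicate points are: (i) making the redirection at partition endpoints consistent so that $g$ is well-defined and continuous (the value $g(a_i)$ is shared by two intervals, so the chosen "next partition point" must agree); (ii) guaranteeing that every partition interval, not just some, participates in a cycle of the symbolic system — this is where chain recurrence of \emph{every} point, rather than mere chain recurrence of the non-wandering set, is used; and (iii) verifying the covering-graph cycles really yield periodic points of $g$, which is the standard Sharkovskii-type argument that a loop $J_0\to J_1\to\dots\to J_k=J_0$ of $g$-covering intervals contains a periodic point. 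I expect these to be routine but technical; the cleanest exposition may well route through the Block--Coven characterization of chain-recurrent interval maps to avoid reproving the combinatorics from scratch.
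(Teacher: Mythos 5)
Your "easy" direction is essentially the paper's argument (take $g\in CP$ with $\rho(f,g)<\eps/2$, follow the $g$-orbit of a periodic point $p$ near $x$, and control the first and last links by uniform continuity of $f$), and your overall plan for the converse — discretize chain recurrence into cycles through a fine partition and build a nearby map realizing those cycles, with a covering-loop/IVT argument producing periodic points — is also the paper's starting point. However, there is a genuine gap at the decisive step: what you construct is a map $g$ with \emph{one} periodic point in each partition cell (a $\delta$-dense set of periodic points), not a map with a \emph{dense} set of periodic points. If $g$ follows the cycles affinely (or nearly so) between partition points, the return map of a cell along its loop is close to affine, so the cell typically carries a single periodic orbit and most small subintervals contain no periodic points at all; such a $g$ is not in $CP$. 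Your fallback "just a map with a periodic point in every $\eps$-ball, then iterate/diagonalize" does not repair this: $\overline{CP}$ is the closure of the set of maps whose periodic points are genuinely dense, and a uniform limit of maps whose periodic sets are merely $\eps_n$-dense is not obviously in $\overline{CP}$ — establishing that implication is essentially the theorem you are proving (or else requires a multi-scale inductive perturbation with persistence of the previously created, transverse periodic orbits, which you do not describe).

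The paper closes exactly this gap with a second perturbation. After the connect-the-dots map through finitely many $\eps/6$-chains (your "follow the cycles" map, shown to be $5\eps/6$-close to $f$), it replaces it by a regular piecewise affine map $h$ that agrees with it on the chain points but whose branches are steep zigzags, arranged so that every nondegenerate interval either covers some partition cell or has its length multiplied by a factor $\alpha>1$, and so that the image of each cell covers its neighboring cells. Iterating, every interval eventually covers a cell, and the cells then spread to all of $I$, so $h$ is locally eventually onto, hence in $CP$, while $\rho(f,h)<2\eps$. Some expansion/mixing mechanism of this kind (or an explicit scheme creating robust periodic orbits at all scales) is the missing ingredient in your proposal; without it the constructed map need not lie in $CP$ and letting $\eps\to 0$ does not place $f$ in $\overline{CP}$. (Incidentally, the paper does not need Block--Coven for this theorem; that result is only invoked later for the turbulence corollary.)
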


\begin{proof}
$(\Rightarrow)$ Fix such a map $f$. The map $f$ has to be onto. Fix $\eps>0$. 
 By our assumption on $f$, there exist points $0=x^1_0,\dots,x^m_0=1\in I$ and  finitely many disjoint $\eps/6$-chains $\mathcal A_{\ell}:=\{x^{\ell}_0,x^{\ell}_1,x^{\ell}_2,\ldots,x^{\ell}_{n(\ell)-1},x^{\ell}_{n(\ell)}=x^{\ell}_0\}$ from $x^{\ell}_0$ to itself.  Since $\mathcal A_{\ell}$ are disjoint, the set $\mathcal A=\bigcup_{1\le \ell\le m}\mathcal A_{\ell}$ can be written spatially as $\mathcal I=\{0=i_0<i_1<\cdots<i_{n-1}=1\}$.
We can assume that 
\begin{itemize}
\item[(i)] $\vert f(u)-f(v)\vert<\eps/6$ for each $k\in \{0,\dots,n-2\}$ and points $i_k\le u<v\le i_{k+1}$.
 \item[(ii)] $\vert i_k-i_{k+1}\vert<2\eps/3$ for each $k\in \{0,\dots,n-2\}$.
\end{itemize}
 Let $\varphi\colon~\mathcal A\to \mathcal A$ satisfy $\varphi(x^{\ell}_i)=x^{\ell}_{i+1 \pmod{n(\ell)}}$. Consider the extension $g\colon~I\to I$ defined as the connect-the-dots map of $(\mathcal A,\varphi)$. Clearly, $\mathcal A$ is a union of $n(\ell)$-cycles for $g$, $1\le \ell\le m$. From (i) and the fact that $\mathcal A$ is a union of $\eps/6$-chains for $f$ we obtain for each $x \in (i_k,i_{k+1})$
\begin{align}&\label{a:1}\vert g(i_k)-g(x)\vert\le\vert g(i_k)-g(i_{k+1})\vert\le \vert g(i_k)-f(i_k)\vert+\vert f(i_k)-f(i_{k+1})\vert+ \\
&+\vert
f(i_{k+1})-g(i_{k+1})\vert<\eps/6+\eps/6+\eps/6<\eps/2\nonumber,
\end{align}
hence
\begin{align*}&\vert f(x)-g(x)\vert\le \vert f(x)-f(i_k)\vert+\vert f(i_k)-g(i_k)\vert+ \vert
g(i_k)-g(x)\vert<\\&<\eps/6+\eps/6+\eps/2=5\eps/6;
\end{align*}
it means that 
\begin{equation}\label{e:dist}\rho(f,g) <5\eps/6.
    \end{equation}

Denote $I_k :=[i_k,i_{k+1}]$, $0\le k\le n-2$. We can consider a regular piecewise affine perturbation $h\colon~I\to I$ of $g$ satisfying (see Figure \ref{fig:perturbation}).

\begin{itemize}
\item[(iii)] $g\vert_{\mathcal A}=h\vert_{\mathcal A}$,
\item[(iv)] for every non-empty interval $J$, either $h(J)\supset I_k$ for some $k$ or $\lambda(h(J))>\alpha\lambda(J)$ for some $\alpha>1$, 
\item[(v)] for each $k$, $h(I_k)$ contains $\mathcal I$-neighbors of $g(I_k)$,
\item[(vi)] $\vert g(x)-h(x)\vert<7\eps/6,~x\in I.$
\end{itemize}
The property (vi) can be fulfilled since (ii) and (\ref{a:1}). We get from (\ref{e:dist}) and (vi) 
\begin{equation}
\vert f(x)-h(x)\vert\le \vert f(x)-g(x)\vert+\vert g(x)-h(x)\vert<5\eps/6+7\eps/6=2\eps,~x\in I.
    \end{equation}
    
    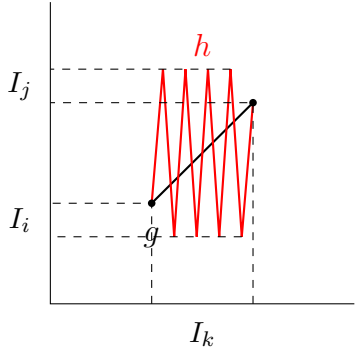
\begin{figure}
       \begin{tikzpicture}[scale=4]
           \draw (0,1)--(0,0)--(1,0);
           \draw[thick] (1/3,1/3)--(2/3,2/3);
           \draw[thick, red] (1/3,1/3)--(10/27,7/9)--(11/27,2/9)--(12/27,7/9)--(13/27,2/9)--(14/27,7/9)--(15/27,2/9)--(16/27,7/9)--(17/27,2/9)--(2/3,2/3);           	\node[circle,fill, inner sep=1] at (1/3,1/3){};
           	\node[circle,fill, inner sep=1] at (2/3,2/3){};
	        \node at (1/3,2/9) {$g$};
	        \node at (1/2,6/7) {\red $h$};
	        \draw[dashed] (1/3,0)--(1/3,1/3)--(0,1/3);
	        \draw[dashed] (2/3,0)--(2/3,2/3)--(0,2/3);
	        \draw[dashed] (17/27,2/9)--(0,2/9);
	        \draw[dashed] (16/27,7/9)--(0,7/9);
	        \node at (-0.1,5/18) {\small $I_i$};
	        \node at (-0.1,13/18) {\small $I_j$};
	        \node at (1/2,-0.1) {\small $I_k$};
       \end{tikzpicture}
       \caption{The perturbation $h$ of $g$.}\label{fig:perturbation}
    \end{figure}
    
    Let us prove that $h$ is leo. Fix a non-empty interval $J$, Property (iv) implies that after a finite number $m$ of iterations $h^m(J) \supset I_k$ for some $k$.  
 Remembering that $n=\Pi_{\ell}n(\ell)$ is the common period of our cycles, properties (iii) and (v) imply that applying $h^n$ to this $I_k$ must contain $I_k$ and its $\mathcal{I}$-neighbors. 
     Since there are finitely many intervals we eventually  cover the whole space.
    The claim is proven,  $h$ is a leo map, in particular $h$ is a map from the set $CP$. Since $\eps$ is arbitrary we have $f \in \overline{CP}$.  
    
   $(\Leftarrow)$ Fix a map $f\in \overline{CP}$ and $\eps>0$. Since $f$ is uniformly continuous, there is  $\delta>0$ such that $\vert f(u)-f(v)\vert<\eps/2$ whenever $\vert u-v\vert<\delta$. Fix an arbitrary point $x\in I$. Taking a map $g\in CP$ such that $\rho(f,g)<\eps/2$ and a $g$-periodic point $p$ of period $n$ satisfying $\vert x-p\vert<\delta$, we put  
$$x_0=x,~x_i=g^i(p),~ 1\le i\le n-1,~x_n=x.$$ Then 
$$\vert f(x_0)-x_1\vert\le \vert f(x)-f(p)\vert +\vert f(p)-g(p)\vert<\eps/2+\eps/2=\eps,$$
for each $1\le i\le n-2$, 
$$\vert f(x_i)-x_{i+1}\vert=\vert f(g^i(p))-g(g^i(p))\vert<\eps/2<\eps;$$
choosing $\delta<\eps/2$, since $p=g^n(p)$ and $x_n=x$, also 
$$\vert f(x_{n-1})-x_n\vert\le \vert f(x_{n-1})-p\vert +\vert p-x_n\vert<\eps/2+\eps/2=\eps.$$
Thus, the finite sequence $x_0=x,x_1=g(p),\dots,x_{n-1}=g^{n-1}(p),x_n$ is an $\eps$-chain from $x$ to itself.
\end{proof}

A map $f\in C(I)$ is called \textit{turbulent} if there exist nondegenerate closed intervals $J, K \subset I$ with at most one point in common, such that $J\cup K\subset f(J)\cap f(K)$. 

\begin{corollary}\label{cor:turbulence} The following hold:
\begin{enumerate}
\item The maps $\mathrm{id}$ and up to conjugacy the map $1-\mathrm{id}$  are the only maps with zero topological entropy from $\overline{CP}$.
\item
$f\in \overline{CP} \text{ such that } h_{\mathrm{top}}(f)>0 \implies f^2 \text{ turbulent.}$ Therefore, either $h_{top}(f)=0$ or $h_{top}(f)\geq\log{2}/2$. In the latter case $f$ has a periodic point of period $6$.
\end{enumerate}
\end{corollary}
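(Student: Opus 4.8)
The plan is to translate the statement, via Theorem~\ref{ChainRecurrentThenCPClosure}, into a statement about chain recurrent interval maps, and then to read off the conclusions from the structural description of such maps due to Block and Coven~\cite{BC}, sharpened by the dichotomy of Barge and Martin~\cite{BM}. The single structural fact I will use is the following trichotomy for a chain recurrent $f\in C(I)$: either (a) $f=\mathrm{id}$; or (b) $f\neq\mathrm{id}$ and $f^2=\mathrm{id}$; or (c) $f^2$ is turbulent. Since a turbulent map carries a two‑interval horseshoe, in case (c) one has $h_{\mathrm{top}}(f^2)\ge\log 2$, whence $h_{\mathrm{top}}(f)=\tfrac12 h_{\mathrm{top}}(f^2)\ge\tfrac12\log 2$; in cases (a) and (b) the map has finite order, so $h_{\mathrm{top}}(f)=0$. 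Combined with Theorem~\ref{ChainRecurrentThenCPClosure} this already yields the dichotomy ``$h_{\mathrm{top}}(f)=0$ or $h_{\mathrm{top}}(f)\ge\log 2/2$'' for every $f\in\overline{CP}$.

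For part (1), let $f\in\overline{CP}$ with $h_{\mathrm{top}}(f)=0$. By Theorem~\ref{ChainRecurrentThenCPClosure} the map $f$ is chain recurrent, so one of (a)--(c) holds; case (c) is impossible because it forces positive entropy, hence $f=\mathrm{id}$ or $f^2=\mathrm{id}$. If $f^2=\mathrm{id}$ and $f\neq\mathrm{id}$, then $f$ is a continuous involution of $I$; being invertible it is strictly monotone, and a strictly increasing involution is the identity, so $f$ is strictly decreasing. Such an $f$ satisfies $f(0)=1$, $f(1)=0$, has a unique fixed point $c\in(0,1)$, maps $[0,c]$ homeomorphically onto $[c,1]$, and $f|_{[c,1]}=(f|_{[0,c]})^{-1}$. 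Fixing an increasing homeomorphism $\psi_0\colon[0,c]\to[0,\tfrac12]$ and defining $\psi$ on $[c,1]$ by $\psi(x)=1-\psi_0(f(x))$ gives an increasing homeomorphism $\psi$ of $I$ with $\psi\circ f\circ\psi^{-1}=1-\mathrm{id}$. Conversely, $\mathrm{id}$, $1-\mathrm{id}$ and all conjugates of $1-\mathrm{id}$ have every point periodic, hence lie in $CP\subseteq\overline{CP}$, and have zero entropy; so these are exactly the zero‑entropy maps in $\overline{CP}$.

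For part (2), let $f\in\overline{CP}$ with $h_{\mathrm{top}}(f)>0$. Then $f$ is chain recurrent, and since (a) and (b) give zero entropy, case (c) holds: $f^2$ is turbulent, and the bound $h_{\mathrm{top}}(f)\ge\log 2/2$ follows as in the first paragraph. Finally, a turbulent map has periodic points of every period, so $f^2$ has a point $y$ of period $3$ under $f^2$; that is, $f^6(y)=y$ while $f^2(y)\neq y$ and $f^4(y)\neq y$. The $f$‑period of $y$ divides $6$ and equals neither $1$ nor $2$ (either would force $f^2(y)=y$), so it is $3$ or $6$. If it is $6$ we are done; if it is $3$, then $f$ has a periodic point of odd period greater than $1$, and Sharkovskii's theorem produces a periodic point of period $6$. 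In all cases $f$ has a periodic point of period $6$.

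The main obstacle is the first paragraph: extracting the trichotomy (a)--(c) in exactly this form from~\cite{BC}, whose description of chain recurrent interval maps is phrased differently, so one has to translate it (and invoke~\cite{BM} for the identity/turbulent alternative on the closure). Note that a naive approximation argument from $CP$ to $\overline{CP}$ does not suffice here, since neither positive entropy nor turbulence is a closed condition, so the structural information about chain recurrent maps is genuinely needed; the rest (the entropy estimate and the period‑$6$ bookkeeping via Sharkovskii) is routine.
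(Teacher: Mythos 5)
Your proposal is correct and follows essentially the same route as the paper: Theorem~\ref{ChainRecurrentThenCPClosure} plus the Block--Coven dichotomy ($f^2=\mathrm{id}$ or $f^2$ turbulent), the $\log 2$ entropy bound for turbulent maps, and the classification of involutions as $\mathrm{id}$ or conjugates of $1-\mathrm{id}$. You merely spell out details the paper leaves implicit (the explicit conjugacy to $1-\mathrm{id}$ and the Sharkovskii bookkeeping for the period-$6$ point), which is fine.
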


\begin{proof}
 The statements follow from \cite{BC}, where the authors prove that interval map $f$ with chain-recurrent set $I$ is either such that $f^2=\mathrm{id}$  or $f^2$ is turbulent. Furthermore,  turbulent maps have  topological entropy at least   $\log 2$. But if $f^2=\mathrm{id}$ then either $f=id$ or   $f$ is topologically conjugate to $1-\mathrm{id}$. By the definition of turbulence, the second iterate of $f$ has a horseshoe and thus also the final of the claims above follows.
\end{proof}

 \begin{theorem}\label{thm:semi-conj}
If a map $f\in C(I)$ satisfies $\psi \circ f= g\circ \psi  \in C(I)$ where  map $g\in CP$ and where $\psi$ is (not necessarily strictly) monotone, but the $\psi$-preimages of $g$-periodic points  are singletons, then $f$ is chain recurrent. In particular, $f\in \CP$.
\end{theorem}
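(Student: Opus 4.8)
The plan is to show that $f$ is chain recurrent; by Theorem~\ref{ChainRecurrentThenCPClosure} this gives $f\in\CP$. First I would normalise: since $g\circ\psi$ is onto, so is $\psi$, and by replacing $\psi$ with $1-\psi$ and $g$ with $(1-\id)\circ g\circ(1-\id)$ if necessary (this preserves all hypotheses, as $CP$ is conjugacy-invariant and periodic points are merely relabelled) one may assume $\psi$ is nondecreasing. The key starting observation is that periodic orbits lift: if $q\in\per(g)$ has period $N$, then for $y\in\psi^{-1}(g^i(q))$ one has $\psi(f(y))=g(\psi(y))=g^{i+1}(q)$, so $f$ maps the singleton $\psi^{-1}(g^i(q))$ into the singleton $\psi^{-1}(g^{i+1}(q))$ for $0\le i<N$; hence $\psi^{-1}(q)$ is a single period-$N$ point of $f$. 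Write $P:=\psi^{-1}(\per(g))\subseteq\per(f)$.

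Next I would pin down $\overline P$. Let $\mathcal J$ be the family of maximal nondegenerate intervals of constancy of $\psi$ and $Q:=I\setminus\bigcup_{J\in\mathcal J}\Int J$. A value of $\psi$ on which it is locally constant is never $g$-periodic (its fibre is not a singleton), so $P\subseteq Q$; and $P$ is dense in $Q$, because any neighbourhood of a point of $Q$ contains a subinterval on which $\psi$ is nonconstant, hence maps under $\psi$ onto a set with nonempty interior, which meets the dense set $\per(g)$, whose singleton $\psi$-preimage then lies in the neighbourhood. Thus $\overline P=Q$, and since $\mathrm{CR}(f)$ (the chain recurrent set) is closed and contains $\per(f)\supseteq P$, one gets $Q\subseteq\mathrm{CR}(f)$. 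If $\psi$ is injective this already finishes the proof, with $Q=I$ --- indeed then $f$ is conjugate to $g\in CP$.

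The remaining and substantive case is $x$ in the interior of some $J=[\alpha,\beta]\in\mathcal J$, say $\psi|_J\equiv c$; fix $\eps>0$ with $B(x,\eps)\subseteq(\alpha,\beta)$. I would build a tower of preimages inside $\mathcal J$: put $W_0:=B(x,\eps)$, $c_0:=c$; inductively, $f^{-1}(W_{m-1})$ is nonempty and open (as $f$ is onto) and, since $W_{m-1}$ lies in a single fibre of $\psi$, is contained in $\psi^{-1}(g^{-1}(c_{m-1}))$; because $g\in CP$ is constant on no interval, $g^{-1}(c_{m-1})$ has empty interior, so the interior of $\psi^{-1}(g^{-1}(c_{m-1}))$ is the union of the interiors of those members of $\mathcal J$ whose value is a $g$-preimage of $c_{m-1}$; choose one, $\Int J_m$, met by $f^{-1}(W_{m-1})$, with value $c_m$ (so $g(c_m)=c_{m-1}$), and set $W_m:=f^{-1}(W_{m-1})\cap\Int J_m$, so $f(W_m)\subseteq W_{m-1}$. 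The $c_m$ are pairwise distinct (a coincidence would make some $c_m$ a $g$-periodic constancy value, impossible), so the $J_m$ are pairwise disjoint, $|J_m|\to0$, and I fix $M\ge2$ with $|J_m|<\eps$ for all $m\ge M-1$. It then suffices to build an $\eps$-chain for $f$ from $x$ to some $z\in J_{M+1}$: then $f(z)\in\psi^{-1}(c_M)=J_M$, and since $|J_M|<\eps$ one may jump to $w\in W_M\subseteq J_M$; applying $f$ gives $f(w)\in W_{M-1},\dots,f^{M-1}(w)\in W_1$ and then $f^{M}(w)\in W_0=B(x,\eps)$, so one final step lands on $x$; concatenation yields an $\eps$-chain from $x$ to $x$.

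Finally, to reach $J_{M+1}$ from $x$ I would follow the exact $f$-orbit of $x$ (whose $\psi$-coordinates are $g^i(c)$): only finitely many members of $\mathcal J$ have length $\ge\eps$, and the $g$-orbit of $c$ cannot be trapped in the finite set of their (non-periodic) values, so for a suitable $n$ the fibre $\psi^{-1}(g^n(c))$ has diameter $<\eps$, allowing a jump, after that many steps, to the singleton lift $\psi^{-1}(q_1)$ of a periodic point $q_1\in\per(g)$ close to $g^n(c)$. Then, using that $g$ is chain recurrent and hence chain transitive (a standard fact for interval maps), there is a $g$-chain from $q_1$ to $c_{M+1}$; after refining it through the dense set $\per(g)$ so that its interior vertices are periodic, one lifts each periodic vertex $d$ to $\psi^{-1}(d)$ and terminates in $J_{M+1}=\psi^{-1}(c_{M+1})$ just as in the ``jump into $W_M$'' step, the $\eps$-slack of the $f$-chain absorbing the small errors of the lift. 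The step I expect to be the main obstacle is precisely this last lift: a monotone but non-injective $\psi$ can be extremely flat on the long members of $\mathcal J$, so a single $\eps$-jump of $f$ cannot lift a step of the $g$-chain whose endpoints straddle such a long value; one must route the $g$-chain to avoid crossing the (finitely many) members of $\mathcal J$ of length $\ge\eps$ --- which chain transitivity and the density of $\per(g)$ should permit --- and control the error accumulated across the remaining short plateaus. Making this routing precise is the technical heart of the proof; the rest is bookkeeping.
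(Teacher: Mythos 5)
Your reductions are fine as far as they go: the lift of $g$-periodic points to $f$-periodic points through singleton fibres, the density of these lifts off the plateaus (so that every point not interior to a plateau is chain recurrent), and the backward plateau tower $W_m\subset\Int J_m$ with the exact-iteration suffix $z\to w\to f(w)\to\dots\to x$ are all correct. But the proof is not complete, and the missing piece is exactly the step you defer to ``routing'': producing an $\eps$-chain of $f$ from (the forward orbit of) $x$ to $J_{M+1}$. The proposed mechanism --- take a $g$-chain from $q_1$ to $c_{M+1}$, refine it through $\per(g)$, and lift each periodic vertex $d$ to the singleton $\psi^{-1}(d)$ --- does not work as stated, and the obstruction is worse than the one you acknowledge. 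The required estimate for a lifted step is $|\psi^{-1}(g(d_i))-\psi^{-1}(d_{i+1})|<\eps$, i.e.\ the \emph{length of the $\psi$-preimage} of the value interval between $g(d_i)$ and $d_{i+1}$ must be small; this length is not controlled by $|g(d_i)-d_{i+1}|$ at all. It can be large not only because a single plateau of length $\ge\eps$ lies in between, but because of an accumulation of short plateaus (or simply a stretch where $\psi$ is very steep), so refining the $g$-chain does not help, and on the interval you cannot ``route around'' a value region: any $g$-chain from $q_1$ to $c_{M+1}$ whose endpoints lie on opposite sides of such a region must place vertices inside it, and their lifts sit at uncontrolled positions inside a long preimage interval. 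In effect, the existence of the connecting $f$-chain is what you are trying to prove, so this step is circular as sketched; also note that ``chain recurrent $\Rightarrow$ chain transitive for interval maps'', which you invoke as standard, is itself left unproved.

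The paper avoids any such traversal. For $x$ in a nondegenerate fibre $J=\psi^{-1}(z)$ (so $z\notin\per(g)$), it takes an \emph{exact} backward orbit of $x$ itself, $w_0=x$, $f(w_{-i})=w_{-i+1}$; the values $\psi(w_{-i})$ form a backward $g$-orbit of the non-periodic point $z$, hence are pairwise distinct, so their fibres are disjoint and all but finitely many have diameter $<\eps$; the same holds forward along $g^i(z)$. Choosing $k$ so that both $\psi^{-1}(g^k(z))$ and $\psi^{-1}(\psi(w_{-k}))$ have diameter $<\eps$, one picks $g$-periodic points $q_n\to\psi(w_{-k})$; their singleton lifts $p_n$ are $f$-periodic and converge to an endpoint of the small backward fibre, hence lie within $\eps$ of $w_{-k}$, while by continuity $f^{2k}(p_n)$ lies within $\eps$ of the small forward fibre containing $f^k(x)$. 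The $2\eps$-chain is then: $x,f(x),\dots,f^{k-1}(x)$, one jump onto $f^{2k}(p_n)$, the exact periodic orbit of $p_n$ ridden around until it returns to $p_n$, one jump onto $w_{-k}$, and the exact backward-orbit points $w_{-k},\dots,w_{-1}$, ending at $f(w_{-1})=x$. So the single idea you are missing is to anchor the periodic orbit at the \emph{small backward fibre of an exact preimage chain of $x$} and let continuity of $f^{2k}$ place its forward image next to the forward orbit of $x$; this replaces the entire lifted-$g$-chain routing, which is the unproved (and, as set up, genuinely problematic) heart of your proposal.
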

\begin{proof}
Fix any $x\in I$. If $x$ is in a singleton fibre of $\psi$ then it is approximated by periodic points since $g\in CP$ and hence belongs to a chain recurrent set. So assume that $x\in J=\psi^{-1}(z)$ for some $z$ where $\diam (J)>0$. Since $z$ is not a periodic point, for any $\eps>0$ there is $k$ and sets $J_i\subset I$, $i=-k,\ldots,k$ such that $\diam(J_{-k})<\eps$, $\diam(J_{k})<\eps$ and $f(J_i)=J_{i+1}$ for $i=-k,\ldots, k-1$ and $x\in J_0\subset J$. But then there is also a sequence of periodic points $p_n$ such that $\lim_{n}p_n\in J_{-k}$. Then it is not hard to see that there is a $2\eps$ chain from $x$ to $x$. Indeed, we iterate $x$ until reaching $J_{k}$, then we move onto orbit of $f^{2k}(p_n)$ and then return from $p_n$ to $J_{-k}$ hitting a point in $f^{-k}(x)\cap J_{-k}\neq \emptyset$. This can be done for every $\eps>0$ showing that $x$ is chain-recurrent.
\end{proof}

\section{Topological structure of the sets $CP$ and $\CP$}\label{sec:structure}

\subsection{Basic properties of $CP$, $\CP$ and $\CP\setminus CP$}\label{subsec:basicproperties}

 By definition $\CP$ is a complete metric space, thus a Baire space.
 The next proposition implies that $CP$ equipped with uniform metric is also a Baire space, see \cite[p. 10]{HM}.

\begin{lemma}\label{l1}
$CP$ is a residual subset of $\CP$.
\end{lemma}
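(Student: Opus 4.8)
The plan is to exhibit $CP$ as a dense $G_\delta$ subset of the complete metric space $\overline{CP}$. Density is immediate: $CP$ is dense in its own closure $\overline{CP}$ by definition, so the whole content is the $G_\delta$ part together with re-checking density against the sets we build.

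First I would write $CP$ as a countable intersection of open sets. The natural candidates are
$$
U_n := \{ f \in \overline{CP} : \per(f) \text{ is } \tfrac1n\text{-dense in } I \},
$$
i.e. $f \in U_n$ iff for every $x \in I$ there is a periodic point $p$ of $f$ with $|x-p| < 1/n$; equivalently $B(\per(f), 1/n) = I$. Then $\bigcap_{n \ge 1} U_n$ is exactly the set of $f \in \overline{CP}$ whose periodic points are dense, which is $CP$. So it remains to show each $U_n$ is open in $\overline{CP}$ and each $U_n$ is dense in $\overline{CP}$.

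For openness of $U_n$: the key tool is stability of periodic points under $C^0$-perturbation when the periodic orbit is ``transverse'' in the sense of a graph crossing. Concretely, if $p$ is a periodic point of period $k$ for $f$, then $p$ is a fixed point of $f^k$; if the graph of $f^k$ genuinely crosses the diagonal at $p$ (changes side), then every $g$ $C^0$-close to $f$ has $g^k$ crossing the diagonal nearby, hence a fixed point of $g^k$ within a prescribed distance of $p$. The subtlety is that not every periodic point crosses transversally — $f^k$ could touch the diagonal without crossing. To handle this I would argue that if $f \in U_n$ then one may already select, for the finitely many ``witness'' points needed to certify $1/n$-density (a finite $\tfrac1{2n}$-net of $I$ suffices, each requiring one nearby periodic point), periodic points that one can perturb to crossing ones — or, more cleanly, use that $f^k$ maps $I$ onto $I$ (since $f$ is onto) and by continuity/compactness the ``reach'' of periodic points is an open condition: if $B(\per(f),1/n) = I$ then by compactness there is $\eps > 0$ with $B(\per_{\le k_0}(f), 1/n - \eps) = I$ for some finite period bound $k_0$, and each such short-period periodic point persists (possibly after an arbitrarily small perturbation inside $\overline{CP}$, but for openness we want persistence without perturbation). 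The honest fix: intersect with the diagonal-crossing condition, noting that for $f \in CP$ itself the periodic points accumulate from both sides, so crossing witnesses are available; for general $f \in U_n \subset \overline{CP}$ this needs the observation that a periodic point of $f\in\overline{CP}$ which is isolated and non-crossing can be ignored because $\overline{CP}$ consists of chain-recurrent maps (Theorem \ref{ChainRecurrentThenCPClosure}), forcing enough recurrence nearby. \emph{I expect this — making ``periodic point of prescribed maximal period persists under small $C^0$ perturbation'' precise — to be the main obstacle}, and the cleanest route is probably to prove openness of the slightly smaller set obtained by also requiring transverse crossings, then show that set is still dense.

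For density of $U_n$ in $\overline{CP}$: since $CP$ is dense in $\overline{CP}$ and $CP \subset U_n$ for every $n$ (a map with dense periodic points is trivially $1/n$-dense in periodic points), every $U_n$ contains the dense set $CP$ and is therefore dense. This is the easy half and requires no construction. Assembling: $CP = \bigcap_n U_n$ with each $U_n$ open and dense in the Baire space $\overline{CP}$, so $CP$ is residual; in particular it is comeager, and as noted in the text this makes $CP$ itself a Baire space in the subspace topology. I would present the argument in the order: (1) set up $U_n$ and verify $\bigcap U_n = CP$; (2) prove openness via the persistence-of-transverse-periodic-points lemma, dispatching non-transverse orbits using chain recurrence / the leo maps constructed in Theorem \ref{ChainRecurrentThenCPClosure}; (3) observe density is inherited from $CP$; (4) conclude with Baire.
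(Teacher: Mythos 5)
Your framework is the right one (Baire category: exhibit a family of open dense subsets of $\CP$ whose intersection lies in $CP$, with openness coming from persistence of transverse periodic points), and you correctly identify the obstruction: the sets $U_n=\{f:\per(f)\text{ is }1/n\text{-dense}\}$ are not open, because non-transverse periodic points need not survive $C^0$-perturbation. But the proposal stops exactly where the real work begins. Once you replace $U_n$ by a smaller set $U_n'$ whose witnesses are required to be transverse (or, equivalently, by unions of balls around maps possessing a transverse periodic $1/n$-net), density of $U_n'$ is no longer ``inherited from $CP$'': a map in $CP$, or even your finite net of witnesses for a map in $U_n$, may consist entirely of non-transverse periodic points, so $CP\not\subset U_n'$ and the ``easy half'' of your argument evaporates. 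Your suggested remedies --- ``periodic points accumulate from both sides, so crossing witnesses are available'' and ``dispatch non-transverse orbits using chain recurrence / Theorem~\ref{ChainRecurrentThenCPClosure}'' --- are not arguments: accumulation of periodic points of unbounded period at a point gives no transverse fixed point of $f^k$ for any fixed $k$, and chain recurrence alone produces no periodic points at all.

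What is missing is precisely the construction the paper supplies: given any map in a dense subset of $\CP$ and a prescribed interval, produce an arbitrarily small perturbation, still in $CP$, having a \emph{transverse} periodic point in that interval. The paper gets this by taking $f_n\in CP$ dense in $\CP$, using a full-support invariant measure to conjugate $f_n$ into $C_\lambda$, invoking the piecewise affine perturbation of \cite[Lemma 12]{BCOT} to plant a transversal periodic point in the image of a prescribed rational interval $J_m$, and conjugating back; transversality then makes ``has a periodic point in $J_m$'' hold on a whole ball $V_{m,n}$, and $V_m=\bigcup_n V_{m,n}$ is open and dense, with $\bigcap_m V_m\subset CP$. (Note also that the paper only shows $CP$ \emph{contains} a dense $G_\delta$; it does not claim $CP$ itself is a countable intersection of open sets, which your plan implicitly aims for and which is harder.) To complete your version you would need an analogue of this transverse-planting lemma --- e.g.\ via the dense piecewise affine leo maps of Lemma~\ref{l5'}, after arranging that no branch of the relevant iterate has slope exactly $1$ --- applied simultaneously to the finitely many intervals of a $1/(2n)$-net; as it stands, the openness of your corrected sets is routine but their density is unproven, and that density is the entire content of the lemma.
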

\begin{proof}
Fix a sequence $(J_m)_m$ of rational subintervals of $[0,1]$
and a sequence $(f_n)_n$ of maps from $CP$ dense in $\CP$. 
By \cite[Remark p.~2]{BCOT} each map $f_n$ from $CP$ has an invariant measure $\mu_n$ with the full support $[0,1]$. 
In this proof we will use the notation $B_{\CP}(\cdot,\cdot)$ to denote an open ball in $\CP$.
Using the increasing homeomorphism $\psi_n$ of $[0,1]$ defined by $\psi_n(x)=\mu_n([0,x])$, we get that $g_n=\psi_n\circ f_n\circ \psi_n^{-1}\in C_{\lambda}$ and
$$
 U_n:=\{\psi_n\circ g\circ \psi_n^{-1}: g\in B_{\CP}(f_n,1/n)\}
$$
 is an open neighborhood of $g_n$ in $\CP$.
Since $C_\lambda \subset CP$ the set $U_n \cap CP$ contains a small  $C_\lambda$ neighborhood  of $g_n $.
 Repeating the construction from \cite[Lemma 12]{BCOT}, we can consider a piecewise affine map $g\in U_n\cap C_{\lambda}$ having a transversal periodic point in $\psi_n(J_m)$. Then for a sufficiently small $\delta$ positive, all maps in the ball $B_{\CP}(g,\delta)\subset U_n$ have a periodic point in $\psi_n(J_m)$. Put
$$
V_{m,n}:=\{\psi_n^{-1}\circ g'\circ \psi_n: g' \in B_{\CP}(g,\delta)\}.
$$
Clearly $V_{m,n}\subset B_{\CP}(f_n,1/n)$ is open and all maps in $V_{m,n}$ have a periodic point in $J_m$.  Moreover, the set $V_m :=\bigcup_n V_{m,n}$ is open and dense in $\CP$ containing maps with periodic points in $J_m$. It follows that any map from $\bigcap_mV_m$ contains periodic points in all $J_m$, and thus is in  $CP$.
At the same time $\bigcap_mV_m$ is a dense $G_{\delta}$ in $\CP$.  
\end{proof}

\begin{corollary}\label{cor:resinCPresinCPclosure}
If $A$ is a residual subset in $CP$, then $A$ is a residual subset of $\CP$.
\end{corollary}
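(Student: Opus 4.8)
The plan is to derive this from Lemma~\ref{l1} by elementary Baire-category bookkeeping. Since $A$ is residual in $CP$, I would fix open dense subsets $U_n$ of $CP$ with $\bigcap_n U_n\subseteq A$. Each $U_n$ is relatively open in $CP$, so I can write $U_n=W_n\cap CP$ with $W_n$ open in $\CP$. Because $\CP=\overline{CP}$, the subspace $CP$ is dense in $\CP$; and since $U_n$ is dense in $CP$, it is dense in $\CP$, hence so is the larger set $W_n$. Thus each $W_n$ is open and dense in $\CP$.

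Next I would invoke Lemma~\ref{l1}: $CP$ is residual in $\CP$, so there exist open dense subsets $V_m$ of $\CP$ with $\bigcap_m V_m\subseteq CP$. The countable family $\{W_n\}_n\cup\{V_m\}_m$ consists of open dense subsets of the complete (hence Baire) metric space $\CP$, so
$$G:=\bigcap_n W_n\cap\bigcap_m V_m$$
is a dense $G_\delta$ subset of $\CP$.

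Finally one checks $G\subseteq A$: any point of $G$ lies in $\bigcap_m V_m\subseteq CP$, hence lies in $W_n\cap CP=U_n$ for every $n$, hence in $\bigcap_n U_n\subseteq A$. Therefore $A$ contains the dense $G_\delta$ set $G$, so $A$ is residual in $\CP$.

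There is essentially no serious obstacle here; all the real content is contained in Lemma~\ref{l1}. The only step that needs a moment's care is the passage from ``$U_n$ dense in $CP$'' to ``$W_n$ dense in $\CP$'', which relies on the density of $CP$ in $\CP$ coming from the fact that $\CP$ is by definition the closure of $CP$.
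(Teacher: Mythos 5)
Your argument is correct and is exactly the intended derivation: the paper states this corollary without proof as an immediate consequence of Lemma~\ref{l1}, relying on precisely the standard Baire-category bookkeeping you carry out (open dense sets of $CP$ extend to open dense sets of $\CP$ because $CP$ is dense there, and intersecting with a dense $G_\delta$ witnessing Lemma~\ref{l1} keeps everything inside $CP$, hence inside $A$). Nothing is missing.
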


\begin{remark}
 Transitive interval maps have dense periodic points, thus all maps from $\overline{CP}\setminus CP$ are not transitive.
\end{remark}

\begin{lemma}\label{l5'}The set of piecewise affine leo maps is dense in $CP$.\end{lemma}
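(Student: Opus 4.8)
Plan of proof for Lemma \ref{l5'} ($CP$ has a dense set of piecewise affine leo maps).

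The plan is to combine the density of $C_\lambda$ in $CP$ with the machinery already used in the proof of Theorem \ref{ChainRecurrentThenCPClosure} and in Lemma \ref{l1}. Fix $f\in CP$ and $\eps>0$; we want a piecewise affine leo map within $\eps$ of $f$. First I would reduce to the case $f\in C_\lambda$: since $f\in CP$ carries an invariant measure $\mu$ with full support (by \cite[Remark p.~2]{BCOT}), the conjugating homeomorphism $\psi(x)=\mu([0,x])$ sends $f$ to $\psi\circ f\circ\psi^{-1}\in C_\lambda$, and $\psi$ together with $\psi^{-1}$ is uniformly continuous; so it suffices to approximate each $g\in C_\lambda$ by piecewise affine leo maps and then conjugate back (the conjugation distorts distances by a controlled amount, and one chooses the approximation parameter accordingly). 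Alternatively one can avoid the conjugation and work directly, using that $C_\lambda\subset CP$ is dense in $CP$ in the uniform metric — but I expect the cleanest route is to quote the known density of piecewise affine (indeed piecewise affine leo) maps in $C_\lambda$ from \cite{BT} or \cite{BCOT}, or to re-derive it exactly as in the $(\Rightarrow)$ direction of Theorem \ref{ChainRecurrentThenCPClosure}.

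The core step, if done from scratch, mirrors the construction of the map $h$ in the proof of Theorem \ref{ChainRecurrentThenCPClosure}. Given $g\in C_\lambda$ (or directly $g\in CP$) and $\eps>0$: pick a fine partition $0=i_0<\dots<i_{n-1}=1$ on which $g$ varies by less than $\eps/6$ on each subinterval and whose mesh is small; pick $g$-periodic points close to a well-chosen finite set of base points so that, replacing $g$ by the connect-the-dots map of the induced finite partial map, we stay within $\eps/2$ of $g$; then perform the same piecewise affine "expansion" perturbation $h$ satisfying the analogues of properties (iii)--(vi), in particular property (iv) (every subinterval either maps onto some partition element or is stretched by a fixed factor $\alpha>1$) and property (v) (each $h(I_k)$ engulfs the $\mathcal I$-neighbours of $g(I_k)$). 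As argued there, (iii)--(v) force $h$ to be leo, and $h$ is piecewise affine by construction and within $2\eps$ of $g$. Rescaling $\eps$ at the outset gives the desired approximation.

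The one point requiring care — and the main obstacle — is making sure the construction keeps us inside $CP$ rather than merely inside $\overline{CP}$: the perturbation $h$ must have a dense set of periodic points, not just be a uniform limit of such. But this is automatic here because $h$ is constructed to be leo, and leo interval maps are topologically mixing, hence transitive, hence have dense periodic points; alternatively the cycles $\mathcal A$ built into $h$ via property (iii) already witness periodicity and the leo property spreads periodic points densely. A secondary technical point is controlling the composition of the two error estimates ($\rho(f,g)$ from the approximation by a $C_\lambda$ map, and the $2\eps$ from the $h$-perturbation, plus the Lipschitz-type distortion of $\psi^{\pm1}$); this is routine bookkeeping with no genuine difficulty. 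I would therefore write the proof as: (1) reduce to $C_\lambda$ via Remark \ref{rem:psihomeo}-style conjugation; (2) invoke, or reproduce in one paragraph, the piecewise affine leo approximation inside $C_\lambda$ exactly as in Theorem \ref{ChainRecurrentThenCPClosure}; (3) note leo $\Rightarrow$ dense periodic points so the approximants lie in $CP$; (4) conjugate back and absorb the distortion into the choice of $\eps$.
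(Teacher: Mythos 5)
Your outline gets the leo part and the ``stay inside $CP$'' part right (leo maps are transitive, hence have dense periodic points), and the distance bookkeeping via uniform continuity of $\psi^{\pm 1}$ is indeed routine. But your final four-step plan has a genuine gap: the lemma asks for \emph{piecewise affine} leo maps, and step (4) destroys piecewise affineness. If $\hat g$ is piecewise affine leo in $C_\lambda$ and $\psi$ is only the homeomorphism $\psi(x)=\mu([0,x])$ coming from the invariant measure of $f$, then $\psi^{-1}\circ\hat g\circ\psi$ is leo, lies in $CP$, and is close to $f$, but it is in general \emph{not} piecewise affine, since $\psi$ need not be piecewise affine (it is typically not even absolutely continuous in any useful way). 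So as written your plan proves only that leo maps are dense in $CP$, not the statement of the lemma. The paper's proof fixes exactly this point: besides choosing piecewise affine leo maps $\hat f_n\to\hat f$ in $C_\lambda$ (Proposition~8 of \cite{BT}), it chooses \emph{piecewise affine} homeomorphisms $\psi_n\to\psi$ and sets $f_n:=\psi_n^{-1}\circ\hat f_n\circ\psi_n$; these are piecewise affine (composition of piecewise affine maps), leo, in $CP$, and $f_n\to f$ by joint continuity of the conjugation in $(\psi_n,\hat f_n)$. Your plan needs this extra approximation of the conjugacy, or some substitute for it.

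Your parenthetical alternative --- running the connect-the-dots plus regular piecewise affine expansion construction from the $(\Rightarrow)$ direction of Theorem~\ref{ChainRecurrentThenCPClosure} \emph{directly} on $f\in CP$ --- would avoid the issue entirely, since there the output $h$ is piecewise affine and leo and within $2\eps$ of $f$ (any $f\in CP$ is chain recurrent, as periodic points are chain recurrent and the chain recurrent set is closed, so the construction applies verbatim, with no circularity since the theorem precedes the lemma). Had you committed to that route, the argument would be complete, at the cost of redoing the construction rather than quoting \cite{BT}; but the route you actually propose to write up is the conjugation one, and without the piecewise affine approximation of $\psi$ it does not deliver the lemma.
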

\begin{proof} Fix $f \in CP$, choose a homeomorphism $\psi$ of $I$ such that $\hat{f} := \psi \circ f \circ \psi^{-1} \in C_{\lambda}$ (see \cite[p. 2] {BCOT}).
Proposition 8 from \cite{BT} states that piecewise affine leo maps are dense in $C_{\lambda}$.
Choose a sequence $(\hat{f}_n)$ of piecewise affine maps in  $C_{\lambda}$ converging to $\hat{f}$
and choose a sequence of piecewise affine homeomorphism $\psi_n$ converging to $\psi$.
Let $f_n :=  \psi_n^{-1} \circ \hat{f}_n \circ \psi_n$. The maps $f_n$ are clearly piecewise
affine, they are leo  since the leo property is preserved by  conjugation by a homeomorphism. Since leo maps have dense periodic points $f_n \in CP$.  Finally 
$f_n = \psi_n^{-1} \circ \hat{f}_n \circ \psi_n \to  \psi^{-1} \circ \hat f \circ \psi
= f$ since $\psi_n \to \psi$ and $\hat f_n \to \hat f$.
\end{proof}

\begin{theorem}\label{thm:CPclosurevsCP}
$\overline{CP}\setminus CP$ is dense in $\overline{CP}$.
\end{theorem}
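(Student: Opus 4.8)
The plan is to show that $\overline{CP}\setminus CP$ is dense in $\overline{CP}$ by perturbing an arbitrary map $f\in\overline{CP}$ slightly to produce a nearby chain-recurrent map which is \emph{not} in $CP$. By Theorem~\ref{ChainRecurrentThenCPClosure} it suffices to exhibit, for every $f\in\overline{CP}$ and every $\eps>0$, a map $g\in C(I)$ with $\rho(f,g)<\eps$ such that $g$ is chain-recurrent but $\overline{\per(g)}\neq I$; then automatically $g\in\overline{CP}\setminus CP$.

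\textbf{Construction.} First I would use Lemma~\ref{l5'}: pick a piecewise affine leo map $g_0\in CP$ with $\rho(f,g_0)<\eps/2$. Now I modify $g_0$ on a tiny subinterval $[a,b]\subset I$ of length $<\eps/2$ to kill periodic points there while keeping chain-recurrence. The cleanest device is to create a short interval on which $g$ has a repelling or "flat" behaviour that produces a wandering-looking but still chain-recurrent region --- but in fact the simplest trick is the following: choose a small interval $[a,b]$ and define $g$ to agree with $g_0$ outside $[a,b]$, while on $[a,b]$ replace $g_0$ by an affine (or piecewise affine) graph that pushes everything through $[a,b]$ with no fixed point of any $g$-iterate inside the open interval $(a,b)$, i.e. so that $(a,b)\cap\per(g)=\emptyset$. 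Since $g_0$ is leo and the modification is supported on an interval of length $<\eps/2$, one checks $\rho(g_0,g)<\eps/2$ (hence $\rho(f,g)<\eps$) and that $g$ remains chain-recurrent: outside $[a,b]$ the leo-type expansion of $g_0$ is essentially untouched, and a point $x\in(a,b)$ can be joined to itself by an $\eps'$-chain by first mapping it out of $[a,b]$ (using that $[a,b]$ is eventually left), then using chain-recurrence of the surviving dynamics, then jumping back into $(a,b)$ via an $\eps'$-small step. Concretely one can also just verify directly that every point of $I$ is still chain-recurrent for $g$ because $g$ is onto and the "gap" $[a,b]$ is short. Thus $g\in\overline{CP}$ by Theorem~\ref{ChainRecurrentThenCPClosure}, and $g\notin CP$ since $(a,b)$ contains no periodic point.

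\textbf{Why this works / the delicate point.} The main obstacle is ensuring two things simultaneously: (1) the perturbation genuinely destroys \emph{all} periodic points in an open subinterval (not just fixed points --- one must prevent periodic orbits of every period from passing through $(a,b)$, which is why controlling the \emph{geometry} of the graph over $[a,b]$, e.g. making $g|_{[a,b]}$ monotone with no graph--diagonal crossing and arranging that forward orbits of points in $(a,b)$ never return, is essential); and (2) chain-recurrence is \emph{preserved}, which needs the ambient map to still be "mixing enough" to reconnect $\eps$-chains --- this is exactly what the leo property from Lemma~\ref{l5'} buys us, since leo-ness is robust under small perturbations supported on a short interval in the sense needed here. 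One should take care that the perturbation is itself done in a measure--coherent way or at least that the resulting $g$ is still onto; mapping $[a,b]$ \emph{across} a fixed target interval handles surjectivity. Since $\eps>0$ was arbitrary and $f\in\overline{CP}$ was arbitrary, $\overline{CP}\setminus CP$ is dense in $\overline{CP}$.
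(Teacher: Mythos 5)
Your reduction (find, near any $f\in\overline{CP}$, a chain-recurrent map with an open interval free of periodic points, then invoke Theorem~\ref{ChainRecurrentThenCPClosure}) is fine, and starting from a piecewise affine leo map via Lemma~\ref{l5'} matches the paper's opening move. But the heart of your construction does not work as described, in two places. First, the mechanism you propose for killing periodic points --- make $g$ monotone (non-constant) on a small window $[a,b]$, agree with the leo map $g_0$ off $[a,b]$, and ``arrange that forward orbits of points in $(a,b)$ never return'' --- is impossible. If $g^n((a,b))\cap(a,b)=\emptyset$ for all $n\ge 1$, then each image $g^n((a,b))$ is a connected set avoiding $(a,b)$, hence lies in $[0,a]\cup[b,1]$ where $g=g_0$; by induction $g^n((a,b))=g_0^{\,n-1}\bigl(g((a,b))\bigr)$, and since $g((a,b))$ is a nondegenerate interval and $g_0$ is leo, these images eventually equal $I\supset(a,b)$, a contradiction. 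So returns to the window cannot be avoided unless you flatten $g$ on $[a,b]$, and once returns occur you have no argument that $(a,b)\cap\per(g)=\emptyset$; indeed any closed subinterval $K\subset(a,b)$ with $g^n(K)\supset K$ produces a fixed point of $g^n$ in $K$. Second, membership of the perturbed map in $\overline{CP}$ is not ``checked'' by the remark that $g$ is onto and the gap is short: chain recurrence demands $\eps'$-chains for every $\eps'>0$, and the pseudo-orbits you sketch (shadowing $g_0$-orbits) incur errors of size up to $\rho(g,g_0)$ each time they cross the modified window, which is useless for small $\eps'$. Surjectivity plus a small, local modification does not imply chain recurrence --- compare the map of Figure~\ref{fig:nodecomp}, where lowering one critical value slightly keeps the map onto but destroys chain recurrence. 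Also note that the final map must \emph{not} be leo (leo implies membership in $CP$), so appeals to robustness of the leo property work against you rather than for you.

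The paper avoids both problems with a different device: a Denjoy-type blow-up. Starting from a piecewise affine mixing map $T$, one blows up the (countable, dense, fully invariant, periodic-point-free) set consisting of the orbit and preimages of a point with dense orbit into intervals $I_i$ of summable lengths, mapping each $I_i$ affinely onto $I_{\phi(i)}$; these intervals wander among themselves, so their interiors carry no periodic points and the resulting map $F$ is not in $CP$, while $\rho(T,G)$ is small for the rescaled copy $G$. Crucially, membership in $\overline{CP}$ is then established constructively, by exhibiting topologically mixing maps $F_n\in CP$ (obtained by inserting expanding $3$-fold windows on the intervals $I_k$, $|k|\ge n$) that converge uniformly to $F$ --- rather than by verifying chain recurrence of $F$ directly (alternatively, Theorem~\ref{thm:semi-conj} applies, since the collapsed fibres are preimages of non-periodic points). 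This global-but-uniformly-small modification along a dense invariant set, together with the explicit approximating sequence in $CP$, is exactly the content missing from your proposal.
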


\begin{proof} 
Start with 
any piecewise affine topologically mixing map $T$ and $\eta \in (0,1/2)$. Note that any piecewise monotone topologically mixing map is leo (see Proposition 2.34 in \cite{R}). 
We will construct a non-transitive map with no flat parts in $B(T,\eta)$ which can be uniformly approximated by maps from $CP$.  Once we have constructed such a map, this completes the proof since piecewise affine leo maps are dense in $CP$ (Lemma \ref{l5'}).
More precisely we will perform a construction from \cite[Example~4.6]{KO} resembling a Denjoy map \cite[Example~14.9]{Devaney}.

First we choose a point $z\in (0,1)$ with a dense orbit under map $T$, which exists since $T$ is leo. Denote by $D_0:=\{z,T(z)\}\cup T^{-1}(\{z\})$ and inductively set
$D_{n+1}:=T(D_n)\cup T^{-1}(D_n)$. Let
$$
D:=\bigcup_{n=1}^\infty D_n.
$$
 Note that $D$ is a dense and countable set such that $T(D)=D$. Since the set of points with dense orbit is residual and the space is perfect, it is uncountable. Denote the set of turning points of $T$ by $\mathcal{T}_T$. Since $T$ is a piecewise affine map $\mathcal{T}_T$ is finite and hence we can replace $z$ by another point when necessary, ensuring this way that also $D\cap (\{0,1\}\cup \mathcal{T}_T)=\emptyset$.
Enumerate its elements by $D=\{x_i\; : \; i\in \N\}$ where $x_i\neq x_j$ for $i\neq j$.
Furthermore observe that if $T^n(x_i)=x_j$ for some $n>0$ then $i\neq j$ and $x_i\notin \{x_j,T(x_j),T^2(x_j),\ldots\}$, if not $z$ would be an eventually periodic point.
By definition, both $D$ and $[0,1]\setminus D$ are invariant under $T$. Define a function $\phi \colon \N \to \N$ so that $T(x_i)=x_{\phi(i)}$.

We follow the standard Denjoy construction, see \cite{Devaney} for details.
At the $i$-th step we remove the point $x_i$ from the interval and replace the hole with an interval $I_i$ of length  $\eta^{i+1}$ (the new interval is longer, so it 
naturally redefines the set $\{x_j : j> i\}$). This way a new continuous map $F$ is defined on the extended space so that each interval $I_i$ is mapped by an affine map 
 onto $I_{\phi(i)}$ in such a way that $F$ is continuous.
 
 Note that $F$ is semi-conjugate to $T$ by collapsing  the intervals $I_i$ back to single points as explained below.
As the domain of $F$ is isometric to $[0,1+\gamma]$, where $\gamma=\sum_{i\in \N}\eta^{i+1} < \eta$ we view $F\colon [0,1+\gamma]\to [0,1+\gamma]$. In this way every interval $I_i$ becomes some interval $[a_i,b_i]\subset (0,1+\gamma)$ and there is a quotient map $\pi\colon [0,1+\gamma]\to [0,1]$ that does not increase distance, collapses every interval $[a_i,b_i]$ into a single point $x_i$, and has the property that $T\circ \pi=\pi\circ F$. If we fix $i,j$, so that $x_i\not\in \{x_j,T(x_j),T^2(x_j),\ldots\}$ then $F^n((a_j,b_j))\cap (a_i,b_i)=\emptyset$ for all $n>0$.

Thus, the map $F$ is not transitive, and is non-constant on any open interval. 
Nonetheless 
we will show that it can be obtained as uniform limit of maps from $CP$.
To do so, let $F_n$ be the map obtained from $F$ in the following way:

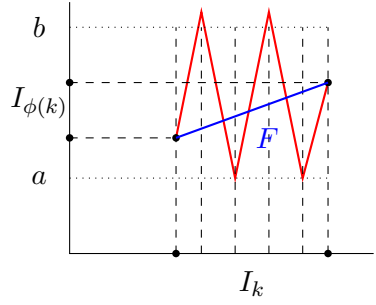
\begin{figure}
	\begin{tikzpicture}[scale=4]
	\draw (0,5/6)--(0,0)--(1,0);
	\draw[thick,red] (1/4+1/10,1/3+1/20)--(1/3+1/10,3/4+1/20)--(4/9+1/10,1/4)--(5/9+1/10,3/4+1/20)--(2/3+1/10,1/4)--(3/4+1/10,2/3-2/20);
	\draw[dashed] (1/4+1/10,0)--(1/4+1/10,3/4);
	\draw[dotted] (0,1/4)--(3/4+1/10,1/4);
	\draw[dotted] (0,3/4)--(3/4+1/10,3/4);
	\draw[dashed] (3/4+1/10,0)--(3/4+1/10,3/4);
	\draw[dashed] (0,1/3+1/20)--(1/3,1/3+1/20);
	\draw[dashed] (0,2/3-2/20)--(3/4+1/10,2/3-2/20);
	\draw[dashed] (1/3+1/10,0)--(1/3+1/10,3/4);
	\draw[dashed] (4/9+1/10,0)--(4/9+1/10,3/4);
	\draw[dashed] (5/9+1/10,0)--(5/9+1/10,3/4);
	\draw[dashed] (2/3+1/10,0)--(2/3+1/10,3/4);
	\node at (1/2+1/10,-0.1) {\small $I_k$};
	\node at (-0.1,1/2) {\small $I_{\phi(k)}$};
 \node at (-0.1,3/4) {\small $b$};
 \node at (-0.1,1/4) {\small $a$};
	\node[circle,fill, inner sep=1] at (1/4+1/10,0){};
	\node[circle,fill, inner sep=1] at (3/4+1/10,0){};
	\node[circle,fill, inner sep=1] at (0,1/3+1/20){};
	\node[circle,fill, inner sep=1] at (0,2/3-2/20){};
	\node[circle,fill, inner sep=1] at (1/4+1/10,1/3+1/20){};
	\node[circle,fill, inner sep=1] at (3/4+1/10,2/3-2/20){};
	\draw[thick,blue] (1/4+1/10,1/3+1/20)--(3/4+1/10,2/3-2/20);
	\node at (1/2+1/20+1/10,1/3+1/20) {\color{blue}\small $F$};
	\end{tikzpicture}
	\caption{Construction of $F_n$. Note that $b-a= \diam(I_k)$.}\label{fig:F_n}
\end{figure}

\begin{enumerate}
 \item outside of $\bigcup_{k\geq n}I_k$ we have $F_n(x)=F(x)$.

    \item  
     If $|k|\geq n$ then $F_n|_{I_k}$ is defined as the map with $5$ pieces of monotonicity,  constant slope, and a $3$-fold map constructed using the interior 3 pieces of monotonicity with the properties that on each piece of monotonicity $J$ we have $F_{n}(J)\supset I_{\phi(k)}=F(I_k)$ and 
    $$
    \gamma>\diam F_n(J)> \max\{\diam I_k, \diam I_{\phi(k)}\}.
    $$ 
    The remaining two pieces of the monotonicity
    with one endpoint contained in the set of endpoints of $I_k$ are used to make the map $F_n$ continuous.
    We can additionally require that the slope on each piece of $I_k$ is at least $3$ because the pieces of monotonicity do not need to be of equal length.
    
\end{enumerate}
It is clear that $F_n$ converges uniformly to $F$. We claim that each $F_n$ is topologically mixing. Indeed, fix any interval $J$ and any $\eps>0$.
Note that if an interval $I\subset I_k$ for some $|k|\geq n$ and $F_n(I)\subset I_{\phi(k)}$ and $|\phi(k)|\geq n$ then either $\diam F_n(I)>\diam(I)$
or $F_n(I)$ contains two consecutive critical points in $I_{\phi(k)}$ and then $F_n^2(I)\supsetneq I_{\phi^2(k)}$. But since diameter of an interval can not grow
to infinity, it means that for $J$ there is some iterate $m$ such that $F_n^m(J)\not\subset I_k$ for any $k$. But since $T$ is mixing and $\pi(F_n^m(J))$
is a nondegenerate interval, for any $\delta>0$ there are points $u,v\in \pi(F_n^m(J))$ and $j>0$ such that $T^j(u)< \delta$ and $T^j(v)>1-\delta$.
But since $\delta$ is arbitrarily small, there are $p,q\not\in \bigcup_{k}I_k\cap F_n^m(J)$ such that $F_n^j(p)<\eps$ and $F_n^j(q)>1+\gamma-\eps$. This means
that $[\eps,1+\gamma-\eps]\subset F_n^{m+j}(J)$, because $F^j(p)=F^j_n(p)$ and $F^j(q)=F^j_n(q)$ showing that $F_n$ is mixing. This proves the claim, and as a consequence each $F_n$ has dense periodic points.

To finish the proof, let $\zeta\colon [0,1]\to [0,1+\gamma]$ be a linear onto map with $\zeta(0)=0$
and let $G=\zeta^{-1}\circ F\circ \zeta$.
Note that if we fix any $z\in [0,1+\gamma]$ then there is $z'\in [0,1+\gamma]$ such that $z'$ is either endpoint of $I_k$ for some $k$ or $z'=z$
and $z\not\in I_k$ for any $k$.
In any of these cases $T(\pi(z))\leq F(z')\leq T(\pi(z))+\gamma$.
But $|F(z')-F(z)|<\sup_{k}\diam F(I_k)\leq \gamma$. This means that 
$T(\pi(z))-2\gamma\leq F(z)\leq T(\pi(z))+2\gamma$.
On the other hand, for any $x\in [0,1]$
there is $z\in [0,1+\gamma]$ such that $\pi(z)=x$ and so
$T(x)-2\gamma\leq F(z)\leq T(x)+2\gamma$.
Fix any $x\in [0,1]$ and let $z=\zeta(x)$ and $y=\pi(z)$. Note that $d(x,z)<\gamma$
and $d(y,z)\leq \gamma$ so $d(x,z)<\gamma$
This gives
\begin{eqnarray*}
d(T(x),G(x))&=&d(T(x),\zeta^{-1}(T(x))) + d(\zeta^{-1}(T(x)),\zeta^{-1}(F(z))) \leq
\\
&\leq& \gamma+d(T(x),F(z))\leq 3\gamma+d(T(x),T(y)).
\end{eqnarray*}
But if $\gamma\to 0$ then $d(T(x),T(y))\to 0$, hence taking small $\gamma$
we can make $\rho(T,G)$ arbitrarily small.
This completes the proof.
\end{proof}

\begin{proposition}
Sets $\CP$ and $CP$ are nowhere dense in $C(I)$.
\end{proposition}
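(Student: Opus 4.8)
The plan is to show that the complement of $\CP$ in $C(I)$ contains an open dense set; since $\CP$ is closed in $C(I)$ (being complete in the uniform metric as a subset of the complete space $C(I)$ and closed under uniform limits by definition), this immediately gives that $\CP$, and hence its subset $CP$, is nowhere dense. So it suffices to produce, arbitrarily close to any $f \in C(I)$, a map $g$ together with a uniform neighborhood of $g$ all of whose members fail to be chain-recurrent; by Theorem~\ref{ChainRecurrentThenCPClosure} this is the same as failing to lie in $\CP$.

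First I would fix $f \in C(I)$ and $\eps > 0$ and recall that every $f$ can be uniformly approximated by a piecewise affine map. So without loss of generality assume $f$ is piecewise affine. The key mechanism is to create a robust attracting behavior that destroys chain recurrence: I want to find a nondegenerate closed subinterval $[a,b] \subset I$ and perturb $f$ to a map $g$ with $\rho(f,g) < \eps$ such that $g([a,b]) \subset (a,b)$ strictly, i.e.\ $g$ maps $[a,b]$ compactly into its own interior. Concretely, pick a short interval $[a,b]$ on which $f$ varies by less than $\eps/2$ (possible by uniform continuity), say $f([a,b])$ is within $\eps/2$ of some value $c$; then using a small perturbation supported near $[a,b]$ one can bend the graph so that the image of $[a,b]$ is a tiny interval strictly inside $(a,b)$ — this costs at most $\eps$ in the uniform metric because we only need to move values by $O(\eps)$ and the length $b-a$ can be taken small. (One should take $[a,b]$ in the interior of $I$, away from $0$ and $1$, which is no loss.)

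Once $g([a,b])$ is compactly contained in $(a,b)$, I claim no point near $a$ (e.g.\ $x = a$ itself, or any point just to the left of $[a,b]$ that gets pushed in, more robustly: the point $a$) can be chain-recurrent once $\eps$-chains are required with $\eps$ small enough. The clean statement: there is $\delta > 0$ with $g([a,b]) \subset [a+\delta, b-\delta]$, and moreover by continuity of $g$ there is a slightly larger interval $[a-\delta', b+\delta']$ whose $g$-image still lies in $[a+\delta, b-\delta]$; then for $\eps < \delta$, any $\eps$-chain starting in $[a-\delta', b+\delta']$ stays in $[a+\delta/2, b-\delta/2] \subset (a-\delta', b+\delta')$ forever and can never come back out to, say, the point $a - \delta'$ — so that point is not chain-recurrent, hence $g \notin \CP$. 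Finally, this property is open: any $g'$ with $\rho(g,g') < \delta/4$ still satisfies $g'([a-\delta',b+\delta']) \subset [a+\delta/4, b-\delta/4]$, so the same argument shows $g' \notin \CP$. Thus $B(g,\delta/4) \cap \CP = \emptyset$, and since $g \in B(f,\eps)$ with $\eps$ arbitrary, $C(I) \setminus \CP$ contains a dense open set, proving $\CP$ (and $CP$) is nowhere dense in $C(I)$.

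The main obstacle — really a bookkeeping point rather than a deep difficulty — is making the perturbation that achieves $g([a,b]) \Subset (a,b)$ while controlling the uniform distance to $f$: one must choose $[a,b]$ short enough that $f$ is nearly constant on it, place the target value safely in the interior, and extend the perturbation continuously to the rest of $I$ without increasing the $\sup$-distance beyond $\eps$; a piecewise affine "dip" localized to $[a,b]$ together with the oscillation bound on $f|_{[a,b]}$ handles this. Everything else (openness, the chain-trapping argument, closedness of $\CP$) is routine.
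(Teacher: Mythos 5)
Your overall strategy (reduce to density of the complement of the closed set $\CP$, create a robust trapping interval, and conclude via Theorem~\ref{ChainRecurrentThenCPClosure} that the perturbed map is not chain-recurrent) is a legitimate route, but the key perturbation estimate has a genuine gap. You pick an \emph{arbitrary} short interval $[a,b]$ on which $f$ has oscillation less than $\eps/2$ and claim that bending the graph so that $g([a,b])\subset(a,b)$ ``costs at most $\eps$'' because ``we only need to move values by $O(\eps)$''. Small oscillation only says that $f([a,b])$ lies near a constant $c$; it says nothing about $c$ being near $[a,b]$, and your perturbation must move the values from near $c$ into $(a,b)$, which costs about $\dist\bigl(c,[a,b]\bigr)$. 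Concretely, take $f(x)=1-x\in CP$ and $[a,b]=[0.1,\,0.1+\eta]$: any $g$ with $g([a,b])\subset(a,b)$ satisfies $\rho(f,g)\ge 0.8-\eta$, no matter how short $[a,b]$ is. The construction only works if $[a,b]$ is placed where $f(x)$ is within $O(\eps)$ of $[a,b]$ itself, essentially around an interior fixed point, and you neither impose this nor verify that such a point exists for the maps you must approximate (it does for every $f\in\CP$: if $\fix(f,1)\subset\{0,1\}$ then $f-\mathrm{id}$ has constant sign on $(0,1)$ and interior points are not chain-recurrent; alternatively one can first pass to a nearby piecewise affine leo map via Lemma~\ref{l5'}). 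This localization is a needed step, not bookkeeping.

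A second problem is the ambient space: in this paper $C(I)$ consists of \emph{surjective} maps, and every element of $\CP$ is onto, while your dip typically destroys surjectivity (perturbing the identity, or $1-x$ near its fixed point $1/2$, the values in $(a,b)\setminus g([a,b])$ are attained nowhere else), so the map you produce need not lie in $C(I)$ at all and then cannot witness that a ball of $C(I)$ meets the complement of $\CP$. One must additionally re-cover the lost values inside $[a,b]$ (say by a steep fold preceding a flat trapping plateau), which again hinges on the fixed-point localization to keep the sup-distance small. This is exactly the difficulty the paper's proof circumvents by a different construction: it takes a mixing piecewise affine map near $f$, performs a Denjoy-type blow-up along the backward orbit of a fixed point, and inserts a rescaled copy of $x\mapsto x^2$ on the blown-up interval; the resulting map is onto, close to $f$, and has an interval robustly free of periodic points, so an entire neighborhood misses $CP$ without appealing to Theorem~\ref{ChainRecurrentThenCPClosure}. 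With the fixed-point localization and a surjectivity-preserving perturbation added, your chain-recurrence argument would give a correct and more elementary proof; as written, it does not go through.
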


\begin{proof}
First note that the map $x\mapsto x^2\notin \CP$ and the same is true for any map in its small neighbourhood in $C(I)$. For what follows we refer the reader to the proof of Theorem~\ref{thm:CPclosurevsCP} where a procedure of blowing up the points is described in more detail. Fix $\eps>0$, $f\in CP$ and take any $g\in C(I)$ that is topologically mixing and piecewise affine, has finite set of local extrema (i.e., there is no interval with constant value) and $\eps$-close to $f$. Let $p$ be a fixed point of $g$ and $D=\cup_{n\geq 0}g^{-n}(p)$. Observe that $D$ is countable and dense in $I$. We enumerate it  $D=\{a_0 = p,a_1,\ldots\}$. Now take a collection of intervals $\{I_n\}_{n\geq 0}$ such that $\sum_{n\geq0}\lambda(I_n)=\gamma$ where $\gamma>0$ is small.
As in the proof of Theorem~\ref{thm:CPclosurevsCP} we modify $g$ to a continuous map $G$ by blowing up each point $a_n$ to an interval $I_n$, and 
if $g(a_n)=a_m$ ($n > 0$) then we define $I_m=G(I_n)$ putting a piecewise affine map properly assigning the values of endpoints so that continuity and monotonicity are preserved; and $G: I_0 \to I_0$ is a  properly rescaled version of   $x \mapsto x^2$, i.e., rescaled so that $G(I_0) = I_0$.
Since $\bigcup I_m$ is dense in $[0,1+\gamma]$
the map $G$ is uniquely defined on
its complement.
Interior points of $I_0$ cannot be periodic points of $G$, thus any map sufficiently close to $G$ can not be in $CP$. Furthermore, if
we take $\eps \to 0$ and $\gamma \to 0$ 
the resulting maps $G$ converge to $f$.
\end{proof}

\subsection{$CP$ and $\overline{CP}$ are uniformly locally arcwise connected}\label{subsec:locallyconnected}

The first step is to prove the following theorem that answers a question  Micha{\l}  Misiurewicz asked at during a talk at the ``Workshop on topological and combinatorial dynamics'' in April 2021 at  
at the CRM in Montreal.

Recall that $f,g\in C(I)$ are homotopic, if there is a family of maps $\{h_t\}_{t\in [0,1]}$ such that $h_0:=f$, $h_1=g$ and the map $(x,t)\mapsto h_t(x)$ is continuous from $I\times[0,1]$ to $I$.

\begin{theorem}\label{thm:ac}
Every map $f\in C_{\lambda}$ is homotopic to $\mathrm{Id}$.
\end{theorem}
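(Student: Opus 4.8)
The plan is to construct an explicit homotopy through continuous (not necessarily measure-preserving) maps from $f$ to the identity, exploiting the fact that $f\in C_\lambda$ is automatically surjective with all mass spread out. The cleanest route is a two-stage deformation. First I would straighten $f$ toward a single ``tent-like'' normal form: since $f\in C_\lambda$ preserves Lebesgue measure, $f$ is onto and every fibre $f^{-1}(y)$ has Lebesgue measure zero, and the function $x\mapsto \lambda(f^{-1}([0,x]))$ is continuous. I want to use a convexity / straight-line argument: for $t\in[0,1]$ set $h_t(x) := (1-t)f(x) + t\,\phi(x)$ for a carefully chosen target $\phi\in C(I)$; this is jointly continuous in $(x,t)$ and stays in $C(I)$ provided the values remain in $[0,1]$, which holds since $[0,1]$ is convex. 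So homotopy within $C(I)$ is essentially free once we fix endpoints; the real content is that a convenient representative $\phi$ can be reached and that $\phi$ is itself homotopic to $\mathrm{Id}$.

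So the second stage, and the crux, is to show a standard model map is homotopic to $\mathrm{Id}$. A natural choice is $\phi = $ the full tent map $T(x) = 1-|2x-1|$ (which lies in $C_\lambda$), or alternatively to first deform $f$ to a piecewise affine element of $C_\lambda$ using Lemma~\ref{l5'}-type density — but density alone does not give a homotopy, so I would instead stay with the straight-line homotopy and argue directly. The key observation is that $C(I)$, with the endpoint constraint removed, is contractible: the straight-line homotopy $H(x,s,t) = (1-t)g_s(x) + t\cdot x$ contracts any path. Concretely, $g\mapsto (1-t)g + t\,\mathrm{Id}$ is a homotopy from any $g\in C(I)$ to $\mathrm{Id}$ because $(1-t)g(x)+tx\in[0,1]$ for all $x$ (convex combination of two points of $[0,1]$) and it is jointly continuous. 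Taking $g=f$ gives the result immediately: $h_t(x) := (1-t)f(x)+t\,x$ satisfies $h_0=f$, $h_1=\mathrm{Id}$, and $(x,t)\mapsto h_t(x)$ is continuous from $I\times[0,1]$ to $I$.

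The main obstacle I anticipate is purely bookkeeping: verifying that the stated notion of homotopy in the paper (maps $h_t\in C(I)$, i.e., \emph{surjective} continuous maps, per the Preliminaries convention that $C(I)$ denotes surjective maps) is respected at every intermediate time $t$. The naive straight-line homotopy $h_t = (1-t)f + t\,\mathrm{Id}$ need \emph{not} be surjective for intermediate $t$ — for instance it can fail to attain the value $1$. To fix this I would either (a) reinterpret: since the homotopy only needs the map $(x,t)\mapsto h_t(x)$ to be continuous into $I$ (not onto), the definition as literally quoted in the excerpt imposes no surjectivity on the intermediate stages, so the straight-line homotopy works verbatim; or (b) if surjectivity of each stage is genuinely required, first homotope $f$ within surjective maps by pushing its graph to touch the corners, e.g. $h_t(x) = \max\{0,\min\{1,(1+t\varepsilon)f(x) + c_t\}\}$ tuned so the range stays all of $[0,1]$, then contract. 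I expect option (a) to be what the authors intend, making the argument a three-line verification; the only subtlety worth a sentence in the writeup is confirming $h_t(I)\subseteq I$, which is the convexity remark above.
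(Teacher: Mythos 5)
There is a genuine gap, and it is exactly the point you chose to wave away in option (a). The content of Theorem~\ref{thm:ac} is not that $f$ and $\mathrm{Id}$ are homotopic as maps of the convex set $I$ --- that is trivially true for \emph{any} continuous self-map of $I$ by the straight-line homotopy, so under your reading the hypothesis $f\in C_{\lambda}$ would play no role and the theorem would be vacuous. The intended (and used) meaning is that the family $\{h_t\}$ stays inside $C_{\lambda}$: the theorem is stated precisely so that Corollary~\ref{cor:ac} (``$C_{\lambda}$ is arcwise connected'', answering Misiurewicz's question) follows, since joint continuity of $(x,t)\mapsto h_t(x)$ together with $h_t\in C_{\lambda}$ gives a path in $C_{\lambda}$ from $f$ to $\mathrm{Id}$ in the uniform metric. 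Your homotopy $h_t=(1-t)f+t\,\mathrm{Id}$ leaves $C_{\lambda}$ immediately: for the tent map and $t=1/2$ one gets $h_{1/2}(x)=3x/2$ on $[0,1/2]$ and $1-x/2$ on $[1/2,1]$, which is neither surjective nor Lebesgue measure-preserving (e.g.\ $\lambda(h_{1/2}^{-1}([0,1/2]))=1/3$). Your fallback option (b) repairs at most surjectivity, not measure preservation, so it does not close the gap either.

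The missing idea is a deformation that preserves Lebesgue measure at every time $t$. The paper does this in two stages, both inside $C_{\lambda}$: first, with $\eps=\inf\{x: f(x)=0\}$, one uses a continuous one-parameter family of $2$-fold window perturbations of $f$ on the intervals $[0,a]$, $a\in[0,\eps]$ (window perturbations of a map in $C_{\lambda}$ remain in $C_{\lambda}$), to move continuously from $f$ to a map $g\in C_{\lambda}$ with $g(0)\in\{0,1\}$; second, one contracts $g$ to the identity via
$$
g_{\alpha}(x)=\begin{cases} x, & x\le \alpha,\\ \alpha+(1-\alpha)\,g\!\left(\tfrac{x-\alpha}{1-\alpha}\right), & x>\alpha,\end{cases}
$$
which is Lebesgue measure-preserving for every $\alpha$ because it acts as the identity on $[0,\alpha]$ and as an affinely rescaled copy of $g$ on $[\alpha,1]$, and satisfies $g_0=g$, $g_1=\mathrm{Id}$. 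Note that the first stage is not cosmetic: without arranging $g(0)\in\{0,1\}$ the formula for $g_\alpha$ is not even continuous at $x=\alpha$. Any correct proof must contain some such measure-preserving construction; a straight-line interpolation cannot work.
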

\begin{proof}
We first show how to reduce  to the case when $f(0)=0$. 
 Suppose $f(0)>0$ and define $\eps:=\inf \{x \in [0,1]: f(x)=0\}$. 
For $a\in [0,\eps]$ we define $f_a$ as a two-fold perturbation on the interval $[0,a]$. $\{f_a\}_{a\in [0,\eps]}$ is a continuous family of maps in $C_{\lambda}$ and $f_{\eps}(0) \in \{0,1\}$. Let $g :=f_\eps$. 
Next, for every $\alpha\in [0,1]$ we define $g_{\alpha}:I\to I$ by: 
$$
g_{\alpha}(x):=\begin{cases}
x,& x\leq \alpha,\\
\alpha + (1-\alpha)g(\frac{x-\alpha}{1-\alpha}), & x>\alpha.
\end{cases}
$$
We have $g_{\alpha}\in C_{\lambda}$ for all $\alpha\in[0,1]$, and they form a continuous family of maps and $g_1=\textrm{Id}$.
\end{proof}

\begin{corollary}\label{cor:ac}
The space $C_{\lambda}$ is arcwise connected.
\end{corollary}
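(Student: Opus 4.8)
The plan is to extract from the \emph{proof} of Theorem~\ref{thm:ac} slightly more than its stated conclusion — an actual path lying inside $C_{\lambda}$ — and then to concatenate such paths.

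\textbf{Step 1: from the homotopy to a $\rho$-continuous path in $C_{\lambda}$.} The construction in the proof of Theorem~\ref{thm:ac} produces, for a given $f\in C_{\lambda}$, the families $\{f_a\}_{a\in[0,\eps]}$ and $\{g_{\alpha}\}_{\alpha\in[0,1]}$, \emph{each consisting of Lebesgue measure-preserving maps}, with $f_0=f$, $f_{\eps}=g_{0}$ and $g_{1}=\mathrm{Id}$. Reparametrising $[0,\eps]$ onto $[0,1/2]$ and $[0,1]$ onto $[1/2,1]$ and concatenating, one obtains a single family $\{h_t\}_{t\in[0,1]}\subset C_{\lambda}$ with $h_0=f$, $h_1=\mathrm{Id}$ and $(x,t)\mapsto h_t(x)$ jointly continuous on $I\times[0,1]$. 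Since $I\times[0,1]$ is compact, joint continuity of $(x,t)\mapsto h_t(x)$ is equivalent to continuity of $t\mapsto h_t$ for the uniform metric $\rho$; hence $t\mapsto h_t$ is a continuous path in $(C_{\lambda},\rho)$ joining $f$ to $\mathrm{Id}$. The only point requiring checking is that $a\mapsto f_a$ and $\alpha\mapsto g_{\alpha}$ are $\rho$-continuous, which is immediate from their explicit definitions.

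\textbf{Step 2: path-connectedness, then arcs.} Given $f,g\in C_{\lambda}$, let $\gamma_f$ and $\gamma_g$ be the paths from Step~1 joining $f$, respectively $g$, to $\mathrm{Id}$. By the pasting lemma, the concatenation of $\gamma_f$ with the reversed path $t\mapsto\gamma_g(1-t)$ is a continuous path in $C_{\lambda}$ from $f$ to $g$, so $(C_{\lambda},\rho)$ is path-connected. Finally $(C_{\lambda},\rho)$ is a metric space, hence Hausdorff, and in a Hausdorff space any two distinct points joined by a path are joined by an arc (the classical reduction of paths to arcs, e.g.\ Willard, \emph{General Topology}, Thm.~31.2). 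Therefore $C_{\lambda}$ is arcwise connected.

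The substantive content is entirely Theorem~\ref{thm:ac}; the rest is routine, and I foresee no genuine obstacle — the mild subtlety is only that the \emph{statement} of Theorem~\ref{thm:ac} speaks of homotopy in $C(I)$ while its \emph{proof} actually stays in $C_{\lambda}$, which is what Step~1 makes explicit. If one prefers to bypass the classical path-to-arc theorem, one may instead observe that the explicit families $\{f_a\}$ and $\{g_{\alpha}\}$ can be chosen to depend injectively on the parameter, so that the concatenated path is already an arc after an obvious reparametrisation.
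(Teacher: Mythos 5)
Your argument is correct and is essentially the proof the paper intends: the corollary is left as an immediate consequence of the fact that the homotopy constructed in the proof of Theorem~\ref{thm:ac} lies entirely in $C_{\lambda}$, so that joint continuity on the compact $I\times[0,1]$ gives a $\rho$-continuous path to $\mathrm{Id}$, and concatenation plus the standard path-to-arc reduction in metric spaces yields arcwise connectedness. Your Step~1 merely makes explicit what the paper uses implicitly, so there is nothing genuinely different to compare.
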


A metric space $(X,d)$ is called {\it uniformly locally arcwise connected} if for any $\eps>0$ there is a $\delta>0$ such that whenever $0<d(x,y)<\delta$, then $x$ and $y$ are joined by an arc of diameter smaller than $\eps$ (see \cite{HY}). For the following definitions and statements see \cite[Subsections 5.10 and 5.22]{Nadler}. A metric space $(X,d)$ is {\em connected im kleinen at $x\in X$} if for each open set $x\in U$, there is a component of $U$ which contains $x$ in its interior. The space $X$ is {\em locally connected at $x$} if for each open set $U$ containing $x$ there is a connected open set $V$ such that $x\in V\subset U$. We say a space is {\em locally connected} if it is locally connected at every $x\in X$. Note that connectedness im kleinen at every $x\in X$ implies local connectedness. However, uniformly locally arcwise connectedness implies connectedness im kleinem at every $x\in X$ since such a point will be always contained in the interior of some arc. Thus uniformly locally arcwise connectedness implies local connectedness. In \cite{KMS} it is proven that 
each of the three spaces: all transitive maps, all piecewise monotone transitive maps and all piecewise linear transitive maps are uniformly locally arcwise connected. 

The fact that transitive maps have dense sets of periodic points allows us to restate Lemma~2.6 from \cite{KMS}, which will help us to prove arcwise connectedness of $\CP$.
\begin{lemma}\label{lem:ulac}
For any transitive maps $f, g\in CP$ there is an arc $A\subset CP$ joining $f$ and $g$ such that its diameter is smaller than or
equal to $5d(f,g)$.
\end{lemma}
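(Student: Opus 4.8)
The plan is to adapt the proof of Lemma~2.6 from \cite{KMS} to the context of $CP$, the key observation being that the arc constructed there between two transitive maps consists itself of transitive maps, and transitive interval maps automatically have dense periodic points. So the main task is to verify that the ``straight-line-type'' arc (or whatever arc \cite{KMS} uses) can be taken to stay inside $CP$ rather than merely inside $C(I)$, and that its diameter bound $5d(f,g)$ is preserved.

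Concretely, I would first recall the construction from \cite{KMS}: given transitive $f,g$ with $d(f,g)=\delta$, one does not simply take the convex combination $t\mapsto (1-t)f+tg$ (which need not remain transitive), but rather a path that first deforms $f$ to a map that is ``large'' enough to dominate, passes through a common transitive map, and then deforms to $g$, all within a tube of radius proportional to $\delta$ around the segment from $f$ to $g$; the factor $5$ comes from bounding the excursions of this path. The point to check is that every map appearing along this path is transitive. Since \cite{KMS} proves precisely that the space of transitive maps is uniformly locally arcwise connected with this same constant $5$, the arc they produce already lies in the transitive maps; hence it lies in $CP$ because every transitive interval map has a dense set of periodic points (a classical fact, also used in the Remark following Corollary~\ref{cor:resinCPresinCPclosure}). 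Thus the arc $A$ from \cite[Lemma~2.6]{KMS} works verbatim, and $A\subset CP$.

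The only genuine content beyond citing \cite{KMS} is the sentence ``transitive maps have dense periodic points,'' together with the remark that this makes the inclusion $\{$transitive maps$\}\subset CP$ available, so that an arc in the former is an arc in the latter. I would therefore state: by \cite[Lemma~2.6]{KMS} there is an arc $A$ in the space of transitive interval maps joining $f$ and $g$ with $\diam(A)\le 5d(f,g)$; since every transitive interval map has a dense set of periodic points, $A\subset CP$, as required.

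I do not expect a serious obstacle here; the lemma is essentially a ``restatement'' as the text preceding it announces. The only mild care needed is to confirm that \cite{KMS} phrases its result for the relevant class (all transitive maps, or piecewise monotone/piecewise linear transitive maps) in a way that yields an arc with the stated diameter bound, and that restricting attention to $CP$ does not cost anything — which it does not, precisely because transitivity is the stronger property. If one preferred a self-contained argument, one could instead reproduce the \cite{KMS} construction explicitly and check transitivity of each intermediate map, but invoking their lemma is cleaner and is what the surrounding exposition intends.
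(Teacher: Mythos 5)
Your proposal is correct and matches the paper exactly: the paper gives no separate proof, presenting the lemma as a restatement of Lemma~2.6 of \cite{KMS} combined with the classical fact that transitive interval maps have dense periodic points, so the arc of transitive maps produced there lies in $CP$ with the same diameter bound $5d(f,g)$. Your speculative description of the internal \cite{KMS} construction is inessential, since you only invoke their lemma as a black box, which is precisely what the paper intends.
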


\begin{theorem}\label{thm1}
$CP$ and $\overline{CP}$ are uniformly locally arcwise connected.
\end{theorem}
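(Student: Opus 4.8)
The plan is to prove that $CP$ is uniformly locally arcwise connected and then deduce the same for $\overline{CP}$ by a limiting argument. For $CP$, I would fix $\eps>0$ and seek $\delta>0$ so that any two maps $f,g\in CP$ with $0<\rho(f,g)<\delta$ can be joined by an arc of diameter $<\eps$ in $CP$. The natural strategy is to first push both $f$ and $g$ by short arcs into the class of transitive (indeed leo) maps in $CP$, and then invoke Lemma~\ref{lem:ulac} to join the two transitive maps by an arc whose diameter is controlled by $5$ times the distance between them.

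First I would use Theorem~\ref{thm:leo}, which says leo maps are open and dense in $CP$: given $f\in CP$ and $\eps>0$, there is a leo map $\tilde f\in CP$ with $\rho(f,\tilde f)<\eps/100$, say. The point I must address is that I need not merely a nearby leo map but a \emph{short arc} from $f$ to $\tilde f$ inside $CP$. Here I would exploit the conjugation trick used repeatedly in the paper: choose a homeomorphism $\psi$ with $\psi\circ f\circ\psi^{-1}\in C_\lambda$, recall that $C_\lambda$ is arcwise connected (Corollary~\ref{cor:ac}) and, more to the point, that the constructions joining maps in $C_\lambda$ (as in \cite{KMS}, \cite{BT}) produce arcs of controlled diameter; conjugating back gives a short arc in $CP$ from $f$ to a leo map. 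Concretely, one can take a piecewise affine leo map close to $f$ via Lemma~\ref{l5'}, and then the explicit $C_\lambda$-homotopies (two-fold/window perturbations) give an arc whose diameter is bounded by a fixed multiple of $\rho(f,\tilde f)$, hence $<\eps/4$ if $\delta$ is small enough. Do the same for $g$, obtaining leo $\tilde g\in CP$ with a short arc from $g$. Then $\rho(\tilde f,\tilde g)\le \rho(\tilde f,f)+\rho(f,g)+\rho(g,\tilde g)$ is small, so Lemma~\ref{lem:ulac} yields an arc $A\subset CP$ from $\tilde f$ to $\tilde g$ of diameter $\le 5\rho(\tilde f,\tilde g)<\eps/2$. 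Concatenating the three arcs and keeping the total diameter below $\eps$ finishes the case of $CP$; uniform local arcwise connectedness is immediate since $\delta$ depends only on $\eps$.

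For $\overline{CP}$, I would argue that uniform local arcwise connectedness passes to the closure. Given $f,g\in\overline{CP}$ with $\rho(f,g)<\delta$, approximate each by maps in $CP$ (again within $\eps/100$), which by the $CP$-case can be joined to each other by a short arc in $CP\subset\overline{CP}$; a short arc joining $f$ to its approximant must also be produced, and for this I would again use that $\overline{CP}$ is exactly the chain-recurrent maps (Theorem~\ref{ChainRecurrentThenCPClosure}) together with the perturbation machinery, or more simply observe that the short-arc constructions above only use the conjugation-to-$C_\lambda$ and homotopy tricks, which make sense for any $f\in\overline{CP}$ once one knows a nearby $CP$ map. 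One then concatenates as before, again with $\delta$ depending only on $\eps$.

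The main obstacle is the ``short arc'' requirement: density of leo (or transitive, or piecewise affine) maps in $CP$ is not enough on its own — one must verify that the path joining $f$ to the approximating transitive map stays in $CP$ \emph{and} has diameter comparable to $\rho(f,\tilde f)$, uniformly. This is where the conjugation to $C_\lambda$, the arcwise connectedness of $C_\lambda$, and the explicit homotopies (two-fold perturbations, as in the proof of Theorem~\ref{thm:ac}, and the window perturbations introduced later) must be combined carefully so that the diameter bound is linear in the distance with a universal constant; one also has to check that conjugating an arc by a fixed homeomorphism $\psi$ distorts diameters by at most a factor depending on the modulus of continuity of $\psi$ and $\psi^{-1}$, which forces a small additional bookkeeping in choosing $\psi$ close to the identity (or rather, controlling the conjugacy). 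Everything else — the triangle inequality estimates and the appeal to Lemma~\ref{lem:ulac} — is routine.
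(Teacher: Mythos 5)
Your outline stalls exactly at the point you yourself flag as ``the main obstacle,'' and that obstacle is not resolved by the tools you invoke. To join $f\in CP$ to a nearby leo map $\tilde f$ by a \emph{short} arc inside $CP$, you propose to conjugate to $C_\lambda$, use the explicit homotopies there, and conjugate back. Two things break. First, the homotopies of Theorem~\ref{thm:ac}/Corollary~\ref{cor:ac} (and the constructions in \cite{KMS}, \cite{BT}) are not known to produce arcs in $C_\lambda$ whose diameter is bounded by a universal multiple of the distance between the endpoints; indeed the paper explicitly records, right after Theorem~\ref{thm1}, that the analogue of Lemma~\ref{lem:ulac} for $C_\lambda$ is an open question. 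Second, even if one had such arcs in $C_\lambda$, the conjugating homeomorphism $\psi$ is dictated by a fully supported invariant measure of $f$; it cannot be ``chosen close to the identity,'' and its modulus of continuity is not uniform over $f\in CP$, so conjugating back destroys precisely the uniformity in $\eps$--$\delta$ that the statement requires. For $\overline{CP}$ the situation is worse: a map in $\overline{CP}\setminus CP$ need not carry a fully supported invariant measure at all, so the conjugation-to-$C_\lambda$ step is not even available there.

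The idea you are missing is the telescoping argument the paper uses, which makes the ``short arc from $f$ to a leo map'' problem disappear. Take leo maps $f_n\to f$ and $g_n\to g$ with $d(g_n,g)<\delta/2^{n+1}$ (and similarly for $f_n$), where $\delta=d(f,g)$. Apply Lemma~\ref{lem:ulac} only between \emph{pairs of leo maps}: an arc $A_0$ from $f_0$ to $g_0$ of diameter $<10\delta$, arcs $A_n$ from $g_{n-1}$ to $g_n$ and $A_{-n}$ from $f_{n-1}$ to $f_n$ of diameter $\le 5\delta/2^{|n|}$. The concatenations $J_n$ form a Cauchy sequence of arcs whose tails have diameters tending to $0$, so the closure of $\bigcup_n J_n$ compactifies by adding exactly the two limit points $f$ and $g$; this is an arc in $CP$ of diameter at most $20\delta$, giving uniform local arcwise connectedness with a linear bound and no conjugation step at all. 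The case of $\overline{CP}$ is then handled by the same telescoping device, replacing ``leo maps dense in $CP$'' by ``$CP$ dense in $\overline{CP}$'' and Lemma~\ref{lem:ulac} by the just-proved property of $CP$. So your concatenation-plus-Lemma~\ref{lem:ulac} instinct is right, but the decisive step is to approximate \emph{both} endpoints by an infinite chain of leo maps and sum a geometric series of arc diameters, rather than to manufacture a single short arc from $f$ to an approximant--- the latter is exactly what is not available.
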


\begin{proof}
Fix any maps $f,g\in CP$. 
Let $\delta=d(f,g)$.
Since by Lemma~\ref{l5'} leo maps are dense in $CP$, there are sequences $f_n\to f$ and $g_n\to g$ of leo maps.
We may also assume that $d(g_n,g)<\delta/2^{n+1}$, so in particular $d(g_n,g_{n+1})<\delta/2^{n+1}$ and $d(g_0,f_0)<2\delta$.
Let $A_0$ be the arc joining $f_0,g_0$,
$A_n$ arc joining $g_{n-1},g_n$ and $A_{-n}$ an arc joining $f_{n-1},f_n$.
Furthermore, we assume that these arcs are obtained by application of Lemma~\ref{lem:ulac}, so $\diam A_0<10\delta$ and $\diam A_n\leq 5\delta/2^{|n|}$. It is also clear that if we denote $J_n=\cup_{i=-n}^n A_n$ then it is an arc, and $d(J_n,J_m)<5\delta/2^n$ for any $m>n$, so it is a Cauchy sequence.
Then the limit $J=\lim_n J_n$ exists. But it is also clear that $\lim_n \diam (\sup_{i\geq n}A_i)=0$ and $\lim_n \diam (\sup_{i\geq n}A_{-i})=0$, so $\cup_n J_n$ compactifies with two points, which in fact are $f,g$ as limits of endpoints of $J_n$. This shows $J$ is an arc and also that $J\subset CP$. By our construction
$$
\diam J\leq \sum_{n=0}^\infty 10\delta /2^{n}\leq 20\delta.
$$
This proves uniform local arcwise connectedness of $CP$.

The proof that $\overline{CP}$ is uniformly locally arcwise connected follows the same lines as for $CP$ with the only difference that in place of the sequence of leo maps we now can use elements of $CP$
 and its uniform local arcwise connectedness.
 \end{proof}

We do not know if an analog of Lemma~\ref{lem:ulac} is true for $C_\lambda$. More precisely, it would be interesting to know the answer of the following question.
Fix any $f, g\in C_\lambda$. Is there a constant $\eta>0$ and an arc $A\subset C_\lambda$ connecting $f$ with $g$ 
such that $\diam A\leq \eta d(f,g)$?

\section{Denseness properties in $CP$ and $\overline{CP}$}\label{sec:densness}

\subsection{Window perturbations in CP}\label{sec-win-pert}\label{subsec:windowperturbations} The proofs in this section use window perturbations as a tool.
Let $J\subset I$ be an interval, $m$ an odd positive integer and $\{J_i \in I: 1 \le i \le m \}$ a finite collection of intervals satisfying $\cup^{m}_{i=1} J_i = J$ and $\Int(J_i) \cap \Int(J_j) = \emptyset $ when $i \ne j$. We will refer to this as a {\it partition} of $J$.
 
In our previous articles we considered the following notion in $C_{\lambda}$. Fix $f \in C_{\lambda}$
 an interval $J \subset I$ and a partition $\{J_i\}$ of $J$.
 A map $g \in C_{\lambda}$ is {\it an $m$-fold window perturbation of $f$ with respect to $J$} and the partition $\{J_i\}$ if
 \begin{itemize}
     \item $g|_{J^c} = f|_{J^c}$ 
     \item for each $1 \le i \le m$ the map $g|_{J_i}$ is an affinely scaled copy of $f|_{J}$  with  $g|_{J_1}$ having the same orientation as $f|_J$, $g|_{J_2}$ having the opposite orientation to $f|_{J}$, and then continuing with the orientations alternating.
 \end{itemize}
 
The essence of this definition is illustrated by Figure~\ref{fig:perturb}.

We will apply this in $CP$ in the following way.  We fix $f \in CP$
and conjugate $f$ to a map $\hat{f} = \psi^{-1} \circ f \circ \psi \in C_{\lambda}$.  We can apply the above procedure to $\hat{f}$ to obtain an $m$-fold window perturbation $\hat{g}$ of $\hat{f}$, and then we call the map $g := \psi \circ \hat{g} \circ \psi^{-1}$ an $m$-fold window perturbation of $f$.

We call an $m$-fold wind perturbation regular
if the intervals $\psi(J_i)$ all have the same length.
 
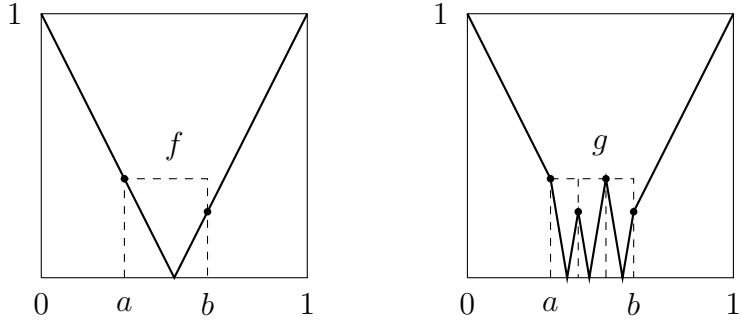
\begin{figure}[!ht]
	\centering
	\begin{tikzpicture}[scale=3.5]
	\draw (0,0)--(0,1)--(1,1)--(1,0)--(0,0);
	\draw[thick] (0,1)--(1/2,0)--(1,1);
	\node at (1/2,1/2) {$f$};
	\node at (5/16,-0.1) {$a$};
	\node at (5/8,-0.1) {$b$};
	\node at (0,-0.1) {$0$};
	\node at (1,-0.1) {$1$};
	\node at (-0.1,1) {$1$};
	\draw[dashed] (5/16,0)--(5/16,3/8)--(5/8,3/8)--(5/8,0);
	\node[circle,fill, inner sep=1] at (5/16,3/8){};
	\node[circle,fill, inner sep=1] at (5/8,1/4){};
	\end{tikzpicture}
	\hspace{1cm}
	\begin{tikzpicture}[scale=3.5]
	\draw (0,0)--(0,1)--(1,1)--(1,0)--(0,0);
	\draw[thick] (0,1)--(5/16,3/8)--(5/16+3/48,0)--(5/16+5/48,1/4)--(5/16+7/48,0)--(5/16+10/48,3/8)--(5/16+13/48,0)--(5/8,1/4)--(1,1);
	\draw[dashed] (5/16,0)--(5/16,3/8);
	\draw[dashed] (5/16+10/48,0)--(5/16+10/48,3/8);
	\draw[dashed] (5/16+5/48,3/8)--(5/16+5/48,0);
	\node at (1/2,1/2) {$g$};
	\node at (0,-0.1) {$0$};
	\node at (1,-0.1) {$1$};
	\node at (-0.1,1) {$1$};
	\node at (5/16,-0.1) {$a$};
	\node at (5/8,-0.1) {$b$};
	\draw[dashed] (5/16,3/8)--(5/8,3/8)--(5/8,0);
	\node[circle,fill, inner sep=1] at (5/16,3/8){};
	\node[circle,fill, inner sep=1] at (5/16+5/48,1/4){};
	\node[circle,fill, inner sep=1] at (5/16+10/48,3/8){};
	\node[circle,fill, inner sep=1] at (5/8,1/4){};
	\end{tikzpicture}
	\caption{For  $f\in C_{\lambda}$ shown on the left, we show on the right the graph of $g$ which is
 a $3$-fold piecewise window perturbation of $f$ on the interval $[a,b]$.}\label{fig:perturb} 
\end{figure}

\subsection{Leo maps form an open dense collection in $CP$}\label{subsec:leo}

We state a part of the Barge-Martin result \cite{BM} which we use later.

\begin{proposition}\label{prop:BM} Suppose $f \in CP$. The following  assertions hold.
\begin{itemize}[(a)]
\item[(a)] There is a collection (perhaps finite or empty) $\J= \J(f) =\{J_1,J_2,\dots\}$ of
closed subintervals of $[0,1]$ with mutually disjoint interiors, such that for each $i$,  $f^2(J_i) = J_i$, and
there is a point $x_i\in J_i$ such that $\{f^{4n}(x_i)\colon~n\ge 0\}$ is dense in $J_i$.
\item[(b)] If $x\in (0,1)\setminus \bigcup_{i\ge 1}\mathring{J}_i$, then $f^2(x)=x$.
\item[(c)] For each $J\in \J$, the map $f^2|_J$ is topologically mixing.
\end{itemize}
 \end{proposition}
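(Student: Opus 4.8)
The plan is to reduce everything to the single substitution $h:=f^2$. Since $\per(f)\subseteq\per(h)$, the map $h$ again has dense periodic points; being a second iterate of a surjection it is onto, and density of periodic points forces $h(0)=0$ and $h(1)=1$. (By Theorem~\ref{ChainRecurrentThenCPClosure} the map $f$, and likewise $h$, is chain-recurrent, so the description of chain-recurrent interval maps in \cite{BC} is also available as a tool.) It then suffices to produce closed subintervals $J_1,J_2,\dots$ with pairwise disjoint interiors such that $h(J_i)=J_i$, the restriction $h|_{J_i}$ is topologically mixing, and $h(x)=x$ for every $x\in(0,1)\setminus\bigcup_i\mathring J_i$: from mixing of $h|_{J_i}$ one gets that $h^2|_{J_i}=f^4|_{J_i}$ is mixing, hence transitive, which yields the dense $f^4$-orbit required in (a); clause (c) is then immediate, and the last condition is exactly (b).

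First I would build the candidate intervals by growing periodic orbits. For an $f$-periodic point $p$ with $h(p)\ne p$, let $O_p$ be its finite $h$-orbit and $K_p:=[\min O_p,\max O_p]$; since $h$ permutes $O_p$ and is continuous, $h(K_p)$ is an interval containing $O_p$, so $h(K_p)\supseteq K_p$, the intervals $h^n(K_p)$ increase with $n$, and $\widehat J_p:=\overline{\bigcup_{n\ge0}h^n(K_p)}$ is a closed interval with $h(\widehat J_p)=\widehat J_p$. I would let $\mathcal J$ be the inclusion-maximal members of the family $\{\widehat J_p\}$. A short argument — the union of two overlapping members is again $h$-invariant and is generated by the periodic orbits it contains — shows that maximal members with overlapping interiors coincide, so $\mathcal J$ is a family with pairwise disjoint interiors and hence at most countable.

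The heart of the matter, and the step I expect to be the main obstacle, is showing that every $x$ with $h(x)\ne x$ lies in the interior of some block. Assuming $h(x)>x$, let $(c,d)\ni x$ be the connected component of the open set $\{y:h(y)>y\}$; a limiting argument gives $h(c)=c$ and $h(d)=d$, so $h([c,d])$ is an interval containing both $c$ and $d$, hence $h([c,d])\supseteq[c,d]$. The delicate point is to rule out ``monotone drift'' on $(c,d)$: the $h$-orbit of any $f$-periodic point $p\in(c,d)$ must come back to $p$ and therefore must leave $(c,d)$, and since $h>\mathrm{id}$ on $(c,d)$ forbids leaving through $c$, some orbit point $s\in(c,d)$ satisfies $h(s)\ge d$, whence $h([c,s])\supseteq[c,d]$ by the intermediate value theorem. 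Iterating this and letting $p$ run over periodic points accumulating on $c$ and on $d$, one shows the forward $h$-orbit of $(c,d)$ is dense in the invariant interval it generates, so $h$ is transitive there and $x$ lies in its interior, hence in the interior of the maximal block containing it. This is (b), and it also shows $h|_{J_i}$ is transitive for every block.

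Finally I would upgrade ``transitive'' to ``topologically mixing'' on each $J_i$ using the standard dichotomy for transitive interval maps: such a map is either mixing or swaps two subintervals meeting at a common fixed point $c$. In the swapping case one checks $f(c)=c$ and, tracing how $f$ acts on the two halves $A,B$ of $J_i$ with $f^2(A)=B$, $f^2(B)=A$, one reaches a contradiction with the surjectivity and continuity of $f$ together with density of $\per(f)$ in $J_i$; hence $h|_{J_i}$ is mixing, which gives (c). The bookkeeping omitted above — that the blocks generated from the components $(c,d)$ are exactly the members of $\mathcal J$, that each $\widehat J_p$ lies inside a block, and the details of the swap exclusion — is worked out in \cite{BM}, of which this proposition is a part.
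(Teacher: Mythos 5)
There is no internal proof to compare against: the paper does not prove Proposition~\ref{prop:BM}, it imports it verbatim from Barge--Martin \cite{BM}. Judged on its own, your sketch follows the natural route (pass to $h=f^2$, grow $h$-invariant blocks from periodic orbits and from components of $\{h\neq \mathrm{id}\}$, prove transitivity on each block, upgrade to mixing), but it contains one outright false assertion and leaves precisely the hard steps unproved. The false assertion is that ``density of periodic points forces $h(0)=0$ and $h(1)=1$.'' The paper itself contradicts this: in the proof of Theorem~\ref{thm:leo} the set $M=\{f\in CP: f(\{0,1\})\cap\{0,1\}=\emptyset\}$ is shown to be dense in $CP$, and e.g.\ a piecewise affine leo map with $f(0)\in(0,1)$ lies in $CP$ while $f^2(0)\neq 0$; this is exactly why clause (b) is restricted to $x\in(0,1)$. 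As a consequence your ``limiting argument gives $h(c)=c$ and $h(d)=d$'' fails when the component of $\{h>\mathrm{id}\}$ abuts $0$ or $1$, and with it the covering step $h([c,s])\supseteq[c,d]$; the endpoint components need a separate argument rather than the (unavailable) fact that $h$ fixes $0$ and $1$.

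The deeper gap is the transitivity claim. By construction the forward images of $[c,d]$ are dense in the invariant interval $J$ they generate, but transitivity of $h|_J$ requires that the forward orbit of an \emph{arbitrary} open subset of $J$ be dense; in particular one must rule out a nondegenerate subinterval of $\Fix(f^2)$, or a smaller invariant interval, sitting inside $\mathring{J}$. That is where density of $\per(f)$ genuinely has to be used (without it one easily builds $h$-invariant intervals generated by a component on which $h$ is not transitive), and ``letting $p$ run over periodic points accumulating on $c$ and on $d$'' does not deliver it. Similarly, the existence of inclusion-maximal blocks, the claim that two maximal blocks with overlapping interiors coincide (the union of two overlapping invariant blocks need not be of the form $\widehat J_r$), and the exclusion of the period-two swap for $f^2|_J$ (which requires tracing the $4$-cycle of intervals $K\to f(K)\to L\to f(L)$ and exploiting continuity at the junction points) are asserted or explicitly deferred to \cite{BM}. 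Since the proposition \emph{is} the theorem of \cite{BM}, deferring these steps to \cite{BM} makes the proposal circular as a self-contained proof: what you have is a reasonable outline of the strategy, marred by the incorrect endpoint claim, with the substantive lemmas still missing.
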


At the first glance, one might naively expect that a similar characterization holds in $\CP$,
namely, for a map in $\CP \setminus CP$ one could hope to partition $I$ based on restricting to a different $\omega$-limit sets. The following example shows this is not always possible.
First note that the map $f$ from Figure~\ref{fig:nodecomp} is in $\CP\setminus CP$. Indeed, $f$ is not in $CP$ since periodic points are not dense in $[1-\eps,1]$ for $\eps>0$ small. However $f\in \CP$ since one can approximate the 
leftmost critical value of $f$ with maps that have critical values slightly above $3/4$ and thus these perturbations will all be transitive and therefore in $CP$. On the other hand, we cannot expect a Barge-Martin decomposition for 
$f$. Namely, the set $[0,3/4]$ is $f$-invariant, and there exists a Cantor set of points in $[3/4,1-\eps]$, for small $\eps>0$, which get mapped to $[0,3/4]$. Therefore, we 
cannot separate the $\omega$-limit set of  $f|_{[0,3/4]}$ from the $\omega$-limit set of $f|_{[3/4,1]}$. 

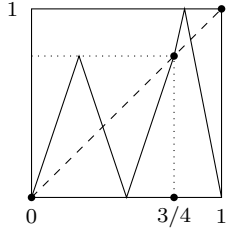
\begin{figure}[!ht]\label{fig3}
	\centering
	\begin{tikzpicture}[scale=2.5]
	\draw (0,0)--(0,1)--(1,1)--(1,0)--(0,0);
	\draw[dotted] (0,3/4)--(3/4,3/4)--(3/4,0);
	\draw[dashed] (0,0)--(1,1);
	\draw(0,0)--(3/12,3/4)--(1/2,0)--(3/4,3/4)--(3/4+1/18,1)--(1,0);
	\node[circle,fill, inner sep=1] at (3/4,0){};
	\node[circle,fill, inner sep=1] at (3/4,3/4){};
	\node[circle,fill, inner sep=1] at (0,0){};
	\node[circle,fill, inner sep=1] at (1,1){};
	\node at (0,-0.1) {\tiny $0$};
	\node at (1,-0.1) {\tiny $1$};
	\node at (0.75,-0.1) {\tiny $3/4$};
	\node at (-0.1,1) {\tiny $1$};
	\end{tikzpicture}
\caption{An example explaining that Barge-Martin-like decomposition in $\CP$ is not possible.}
\label{fig:nodecomp}
\end{figure}

The following definition is crucial in the proof of Theorem \ref{thm:leo}.

\begin{definition}For $f\in CP$, an interval or a point $J\subsetneq [0,1]$ \emph{$f$-splits $I$} if 
\begin{itemize}
\item either $J=f(J)$ 
\item or $J$ and $f(J)$ have disjoint interiors and $f^2(J)=J$
\end{itemize}
and each nonempty connected component $C$ of $I\setminus (J\cup f(J))$ satisfies $f^2(\overline{C})=\overline{C}$.
\end{definition}

\begin{theorem}\label{thm:leo} Leo maps form an open dense collection in $CP$.
\end{theorem}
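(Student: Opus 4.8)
The denseness half is already in hand: by Lemma~\ref{l5'} even piecewise affine leo maps are dense in $CP$, so leo maps are dense in $CP$. The real content of the statement is that leo maps form an \emph{open} subset of $CP$, and the plan is to prove the equivalent assertion that the set of maps in $CP$ that are \emph{not} leo is closed in $CP$. The device linking the dynamics to a compactness argument is the notion ``$J$ $f$-splits $I$'': first I would establish that, for $f\in CP$, $f$ fails to be leo precisely when some proper $J$ $f$-splits $I$, and then I would show that the collection of maps in $CP$ admitting such a splitting is closed under uniform limits.

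For the characterization, the direction ``splitting $\Rightarrow$ not leo'' is soft: from $J=f(J)$ (or $f^2(J)=J$ in the second alternative) one gets that $J\cup f(J)$ is forward invariant, and if $J\cup f(J)=I$ then both $J$ and $f(J)$ are $f^2$-invariant; in every case $f$ (or $f^2$) has a proper invariant closed interval, so $f$ cannot be leo. For the converse I would feed a non-leo $f\in CP$ into the Barge--Martin decomposition of Proposition~\ref{prop:BM}: if $\J(f)=\emptyset$ then $f^2=\mathrm{id}$ and any $f$-fixed point $f$-splits $I$; if $\J(f)$ has a proper member $J_i$, then $J_i$ together with $f(J_i)$ (again a member of $\J(f)$) is the splitting interval, the complementary components being $f^2$-invariant by part (b); and in the remaining case $\J(f)=\{I\}$ with $f^2|_I$ topologically mixing, a non-leo $f$ must have an ``inaccessible'' endpoint, since any interior point lying in some $f^{-n}(0)$ or $f^{-n}(1)$ would be hit by every interval under iteration of the mixing map $f^2$ and would force $f$ to be leo; this pins $f$ to fixing $0$, fixing $1$, or swapping them, and then the corresponding endpoint $f$-splits $I$.

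For the closedness step, suppose $f_n\to f$ in $CP$ with proper splitting witnesses $J_n$. By compactness of the hyperspace of closed subintervals-or-points of $I$, pass to a subsequence along which $J_n\to J$ in the Hausdorff metric; uniform convergence then yields $f_n(J_n)\to f(J)$ and, for each surviving complementary component, $f_n^2(\overline{C_n})=\overline{C_n}\to f^2(\overline C)=\overline C$. As long as $J\subsetneq I$ this shows $J$ $f$-splits $I$, hence $f$ is not leo. Combined with the characterization, the non-leo maps form a closed subset of $CP$, so leo maps are open in $CP$; together with Lemma~\ref{l5'} this proves the theorem.

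The step I expect to cause the most trouble is excluding (or absorbing) the degenerate limit $J_n\to I$, where the splitting witness escapes toward $\partial I$: the unique complementary component of $I\setminus(J_n\cup f_n(J_n))$ collapses to a point $p\in\{0,1\}$ with $f^2(p)=p$, and one must either rule this out or show that $p$ still $f$-splits $I$. To handle it I would use the uniform covering time of the leo map $f$ --- for every $\delta>0$ there is an $N$ with $f^N(J)=I$ for every interval $J$ with $|J|\ge\delta$, by compactness and surjectivity --- which prevents $f$ from being a uniform limit of maps reducible along an interval almost filling $I$ unless the reducible remainder is pinned at an endpoint of $I$; combined with the Barge--Martin structure of the $f_n$ on that vanishing remainder, this is the technical heart of the openness argument.
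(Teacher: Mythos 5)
Your density step is fine, and the idea of characterizing non-leo maps via $f$-splitting and Barge--Martin is in the spirit of the paper, but the reduction you announce --- ``it suffices to show that the non-leo maps are closed in $CP$'' --- is a genuine gap, and the step you defer as the ``technical heart'' (witnesses $J_n$ degenerating to an endpoint or to all of $I$) cannot be completed, because the set of non-leo maps in $CP$ is in fact \emph{not} closed. Concretely, let $T$ be the full tent map (leo, in $CP$), and for small $u>0$ let $g_u$ be the identity on $[0,u]$ and on $[u,1]$ the affine rescaling of $T$ onto $[u,1]$. Then $g_u$ is continuous and surjective, has dense periodic points (every point of $[0,u]$ is fixed, and the rescaled tent is leo on $[u,1]$), is not leo (it has the proper invariant interval $[0,u]$), and $\rho(g_u,T)\to 0$ as $u\to 0$. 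So non-leo maps of $CP$ accumulate on a leo map, exactly through the degenerate configuration you flag; your proposed use of a uniform covering time for the limit map cannot rule it out. This is precisely why the paper never attempts closedness in all of $CP$: it first passes to the open dense set $M=\{f\in CP:\ f(\{0,1\})\cap\{0,1\}=\emptyset\}$, proves the splitting characterization of non-leo maps inside $M$, and shows the corresponding set $N$ is closed \emph{in $M$}; in the degenerate limits ($J=\{0\}$, $J=\{1\}$ or $J=[0,1]$) the limit map is expelled from $M$ (as happens with $T$ above, since $T(0)=0$), and the theorem is obtained in the form ``the leo maps in $M$ form an open dense collection,'' as the remark following the proof makes explicit.

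A second, related flaw sits in your characterization itself. With the paper's definition, if $p\in\{0,1\}$ is a fixed point of $f$ then $J=\{p\}$ always ``$f$-splits'' $I$, vacuously: $J=f(J)$ and the unique component of $I\setminus(J\cup f(J))$ has closure $I$, which is $f^2$-invariant simply because maps in $CP$ are surjective. Hence your ``soft'' direction (splitting $\Rightarrow$ not leo) is false --- the tent map again --- so the endpoint witnesses you produce in the mixing case ($\J(f)=\{I\}$ with $f$ fixing or swapping the endpoints) carry no obstruction to the leo property and cannot close the equivalence; if instead you exclude endpoint witnesses, the converse fails, since a non-leo mixing map in $CP$ with an inaccessible fixed endpoint has no proper invariant nondegenerate interval and no interior period-$\le 2$ splitting point. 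The restriction to $M$ removes both problems at once: for $f\in M$ surjectivity gives interior preimages of $0$ and $1$, so mixing forces leo there, and endpoint or whole-interval witnesses cannot occur. That restriction is the essential ingredient your proposal is missing, and without it the statement you reduce to is simply false.
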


\begin{proof}Let 
\begin{equation*}
M :=\{f\in CP\colon~f(\{0,1\})\cap \{0,1\}=\emptyset\}.
\end{equation*}
Clearly $M$ is open in $CP$.  We claim $M$ is also dense in $CP$. Fix $f\in CP$ and $\eps > 0$. By Remark from \cite{BCOT} there is a non-atomic invariant measure $\mu$ for $f$ with full support on $I$ and a homeomorphism $\psi:I\to I$ such that $g:=\psi^{-1}\circ f\circ \psi\in C_{\lambda}$. If $g(0)=0$ or $g(0)=1$ then we can perform a $2$-fold window perturbation of size at most $\eps$ on the arc $[0,\eps]$ to destroy the fixed points and similarly we can destroy $f(1)=0$ or $f(0)=1$. We obtain a map $G\in C_{\lambda}$ with $\rho(G,g) < \eps$ such that $G(0),G(1)\notin \{0,1\}$. Note that $G \in CP$, and thus the map
$F:=\psi\circ G\circ \psi^{-1}\in CP$ as well. 
Since this is true for every $\eps$ there exists a map $F \in CP$ arbitrarily close to $f$ with $F(0),F(1)\notin \{0,1\}$.

  Notice that if $f \in M$ then $f^2\ne\mathrm{Id}$. 
	If $f^2\ne \mathrm{Id}$, the set of intervals $\J$ in Proposition \ref{prop:BM} is non-empty.

We claim that the set
\begin{align*}N:=&\{g\in M\colon~\text{a periodic interval or point of period $p\in\{1,2\}$ $g$-splits }[0,1] \}
\end{align*}
coincides with the set of maps in $M$ that are not leo. 
The only if direction is clear.

 Suppose first that $f \in M$ is not leo and that 
$\#\, \J =1$. Note that such an $f$ can not exists in $M$, but we will not prove or use this fact directly. We claim that  
$J \subsetneq I$. Suppose the contrary, i.e., $J = I$.  By Proposition~\ref{prop:BM}{\it (c)} $f$ is topologically mixing, but for any interval $(a,b)$ we have
$f^n(a,b)$ is an interval and $\lambda(f^n(a,b)) \to 1$.  Since $f \in CP$ it is surjective and since 
$f \in M$ we have some $0<c<d<1$ such that $f(c) = 0$ and $f(d) =1$ (or vice versa). Combining these two fact yields that $f^n(a,b) = I$ for sufficiently large $n$, contradicting that $f$ is not leo.  We have shown $J \subsetneq I.$
By Proposition~\ref{prop:BM}{\it (b)} $f^2(\overline{C}) = \overline{C}$ for each connected component $C$ of $J^c$ and thus $J$ $f$-splits $I$.

 Now suppose $f \in M$ is not leo and $J_1$, $J_2 \in \J$  then again by 
Proposition~\ref{prop:BM}{\it (b)}
$f^2(\overline{C}) = \overline{C}$ for each nonempty connected component $C$ of $(J_1 \cup J_2)^c$ and thus either of the $J_i$ $f$-splits $I$. This finishes the proof of the claim.

 Let us prove that $N$ is closed in $M$.  Suppose that $\{f_n\} \subset N$ converges to $f \in M$. Since $I$ is compact we can choose a subsequence of $n_i$'s such that the corresponding intervals $J_{n_i}$ converge to an interval $J$, which is  possibly degenerate to a point or $J = [0,1]$. Since 
$f_n^2(J_n) = J_n$
and the $f_n$ are continuous we get $f^2(J) = J$.

If $J\neq [0,1]$ and $J \cap (0,1) \ne \emptyset$ then $J$ $f$-splits $I$ and so $f \in N$.
If $J = \{0\}$
then $f(0) = 0 $ or $f(0) =1$ and so $f \not \in M$, and similarly if $J = \{1\}$ we have $f \not \in M$.

Finally we turn to the case $J = [0,1]$. But in this case it follows from Proposition~\ref{prop:BM}(a) and the fact $f_n\to f$ that $f(0)=0$ or $f(0)=1$ (and similarly for $f(1)$). Therefore, also in this case $f\notin M$.

To conclude the proof note that the set $N$ is nowhere dense in $M$ since the set of piecewise affine leo maps is dense in $M$ by Lemma~\ref{l5'}.
\end{proof}

\begin{remark}\label{rem:leolambda}
The set of leo piecewise affine interval maps $f$ preserving $\lambda$ such that $f(\{0,1\})\cap \{0,1\}=\emptyset$ forms a dense collection in $C_{\lambda}$, see \cite[Proposition 8]{BT}.
Thus the proof of Theorem~\ref{thm:leo} works without change in the setting of $C_{\lambda}$.
Suppose $\mu$ is a non-atomic invariant measure $\mu$ with full support.
By Remark~\ref{rem:psihomeo} leo maps are open and dense in $C_{\mu}$ as well. 
\end{remark}

\begin{remark}
In fact the proof of Theorem~\ref{thm:leo} shows more, namely that the set of leo maps $f$ such that $f(\{0,1\})\cap \{0,1\}=\emptyset$ is an open and dense subset of $CP$. Suppose $\mu$ is a non-atomic invariant measure $\mu$ with full support. By Remark~\ref{rem:leolambda} the space $C_{\mu}$ contains an open dense set of leo maps $f$ such that $f(\{0,1\})\cap \{0,1\}=\emptyset$.  
\end{remark}

 Applying Corollary \ref{cor:resinCPresinCPclosure} and Theorem \ref{thm:CPclosurevsCP} yields

\begin{corollary}
The set of leo maps is residual in $\CP$  but is not open.
\end{corollary}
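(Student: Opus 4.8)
The plan is to deduce both halves of the statement directly from the two results cited in the sentence preceding it. For residuality, I would first invoke Theorem~\ref{thm:leo}, which gives that the leo maps form an open dense subset of $CP$; in particular the set $L$ of leo maps is residual in $CP$. Then Corollary~\ref{cor:resinCPresinCPclosure} upgrades this at once: any residual subset of $CP$ is residual in $\CP$, so $L$ is residual in $\CP$.

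For the second half, I would show that $L$ is a nonempty set with empty interior in $\CP$, and hence is not open. Nonemptiness is clear (any piecewise monotone topologically mixing map is leo, cf.\ the proof of Theorem~\ref{thm:CPclosurevsCP}). For the empty interior, the key observation is that every leo map has a dense set of periodic points, so $L\subset CP$; therefore $L$ is disjoint from $\CP\setminus CP$. By Theorem~\ref{thm:CPclosurevsCP} the set $\CP\setminus CP$ is dense in $\CP$, so $\CP\setminus L\supset \CP\setminus CP$ is dense in $\CP$, i.e.\ $L$ has empty interior. A nonempty set with empty interior cannot be open, which completes the proof.

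The argument is essentially immediate from the quoted results, so I do not expect a genuine obstacle; the only point that needs a moment's care is recording that leo maps lie in $CP$, which is exactly what makes the dense set $\CP\setminus CP$ a witness to the empty interior of $L$.
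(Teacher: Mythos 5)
Your proposal is correct and follows exactly the paper's intended argument: the paper derives this corollary by combining Theorem~\ref{thm:leo} with Corollary~\ref{cor:resinCPresinCPclosure} for residuality, and Theorem~\ref{thm:CPclosurevsCP} (together with the fact that leo maps lie in $CP$) for non-openness. You have simply written out the details the paper leaves implicit.
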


Blokh showed
that topological mixing implies the periodic specification property \cite[Theorem 3.4]{R} (see also \cite[Appendix A]{Buzzi}), thus we have

\begin{corollary}\label{cor:psp}
Maps satisfying the periodic specification property form an open dense collection in $CP$, $C_{\lambda}$ and $C_{\mu}$.
\end{corollary}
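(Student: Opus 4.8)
The plan is to deduce the corollary directly from Theorem~\ref{thm:leo} and Blokh's theorem \cite[Theorem 3.4]{R}, using the elementary fact that the leo property is stronger than topological mixing. First I would observe that if $f$ is leo then for every nonempty open $U\subset I$ there is $n\in\N$ with $f^n(U)=I$; since $f$ is onto, $f^m(U)=I$ for every $m\ge n$, and in particular $f^m(U)\cap V\ne\es$ for every nonempty open $V\subset I$ and every $m\ge n$. Hence $f$ is topologically mixing, so the set of leo maps is contained in the set of topologically mixing maps, which by \cite[Theorem 3.4]{R} (see also \cite[Appendix A]{Buzzi}) is contained in the set of maps satisfying the periodic specification property.

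Next I would invoke Theorem~\ref{thm:leo}, which states that the leo maps form an open dense subset of $CP$, and Remark~\ref{rem:leolambda}, which asserts that the same argument shows the leo maps form an open dense subset of $C_{\lambda}$; combined with Remark~\ref{rem:psihomeo}, i.e.\ conjugating by the homeomorphism $\psi(x)=\mu([0,x])$, the same holds in $C_{\mu}$ for any non-atomic probability measure $\mu$ on $I$ with full support. Putting the two paragraphs together: in each of the three spaces $CP$, $C_{\lambda}$ and $C_{\mu}$, the collection of maps with the periodic specification property contains the open dense set of leo maps, which is precisely the claim (and in fact exhibits an open dense set of such maps, not merely a residual one).

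There is essentially no obstacle here. The only points requiring care are the implication leo $\Rightarrow$ topologically mixing, which is immediate from the definitions as above, and the correct bookkeeping of which of the three spaces the openness and density of the leo maps has already been recorded for in Theorem~\ref{thm:leo} and Remark~\ref{rem:leolambda}; once these are in hand, the corollary is a one-line consequence of Blokh's theorem.
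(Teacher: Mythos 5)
Your argument is correct and is exactly the paper's: the corollary is stated as an immediate consequence of Theorem~\ref{thm:leo} (with Remarks~\ref{rem:leolambda} and \ref{rem:psihomeo} for $C_{\lambda}$ and $C_{\mu}$) combined with Blokh's theorem that mixing implies the periodic specification property, using leo $\Rightarrow$ mixing. Nothing differs beyond your explicitly writing out this last implication.
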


\subsection{Conjugacy classes in $\overline{CP}$}\label{subsec:conjugacyclasses} 

A beautiful result of Kechris and Rosendal \cite{KR} shows that there is residual set in the space of all homeomorphisms of the Cantor set such that each pair of its elements are conjugate (so-called \textit{residual conjugacy class}). Moreover, Bernardes and Darji \cite{BD} proved there is a residual conjugacy class in the space of all self-maps of the Cantor set. In the case of $\CP$ (or  $C_{\mu}$ and $C(I)$) all the standard invariants (entropy, mixing, ... ) do not exclude the possibility that such a class exists. 
Our next result shows that in $\CP$ a residual conjugacy class  does not exist.   

Let $\mathcal{H}(I)$ denote the set of all homeomorphism (increasing  or decreasing) of $I$ and for $f\in \CP$ put $$G_f :=\{\psi^{-1} \circ f \circ \psi: \psi\in\mathcal{H}(I)\}.$$

 Denote by $\mathcal{R}:=\bigcup_{f\in \CP}\{f\}\times G_{f}$ the {\em conjugacy relation} in $\CP\times \CP$.

Theorem \ref{ChainRecurrentThenCPClosure} directly implies that the set $\overline{G_f}$ is a subset of $\CP$. 

As usually, $\sgn\colon~\mathbb R\to\{-1,0,1\}$ is defined as 
$$
\sgn(x):=\begin{cases}
-1,& x<0,\\
0,& x=0,\\
1,& x>0.
\end{cases}
$$

 \begin{theorem}\label{conj-nowhere}
For every $f\in \CP$, the set $G_f$ is nowhere dense in $\CP$.
\end{theorem}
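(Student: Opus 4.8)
To show $G_f$ is nowhere dense in $\CP$, it suffices to show it is closed with empty interior, or more conveniently to show that its closure has empty interior. The natural strategy is to identify a conjugacy invariant which is \emph{locally non-constant} on $\CP$: a quantity $\nu(g)$ that is invariant under conjugation by $\mathcal H(I)$, so it is constant on $G_f$, yet takes different values on maps arbitrarily close to any given map in $\CP$. If such a $\nu$ exists and additionally the level sets $\{\nu = c\}$ are each nowhere dense (or at least each $G_f$ lies in a nowhere dense level set), we are done. The most promising candidates are combinatorial/order-theoretic: the number of fixed points, the cyclic/order structure of periodic orbits of a fixed low period, or — exploiting the $\sgn$ function the authors just introduced — the ``sign pattern'' of $f$ at its fixed points, i.e.\ the sequence of values $\sgn(f(x)-x)$ read off between consecutive fixed points. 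Since conjugation by an increasing homeomorphism preserves this pattern and conjugation by a decreasing one reverses and reflects it, this pattern (up to that $\mathbb Z_2$-action) is a conjugacy invariant, and it takes only finitely many values once we bound the number of fixed points, whereas on a dense set of maps in $\CP$ (the leo piecewise affine maps, which are dense by Lemma~\ref{l5'}) one can realize arbitrarily many fixed points with prescribed sign patterns by small perturbations.

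\textbf{Key steps, in order.} First, fix $f\in\CP$ and an arbitrary ball $B(g_0,\eta)\subset\CP$; the goal is to find a map in this ball not conjugate to $f$. Second, use density of piecewise affine leo maps (Lemma~\ref{l5'}) together with the window/$m$-fold perturbation machinery of Subsection~\ref{subsec:windowperturbations} to produce, inside $B(g_0,\eta)$, a piecewise affine leo map $h$ whose chosen invariant $\nu(h)$ differs from $\nu(f)$ — for instance, $h$ has strictly more fixed points than $f$, or a fixed point whose local sign pattern cannot occur for any conjugate of $f$. The perturbation is done after conjugating into $C_\lambda$ (so that the perturbations live in a measure-preserving class and the conjugating homeomorphism $\psi$ is explicit), applying a small window perturbation that introduces a new transversal fixed point with a controlled sign change, then conjugating back; this keeps the map in $CP\subset\CP$ and within distance $\eta$. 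Third, verify that $\nu$ is genuinely $\mathcal H(I)$-invariant (handling the decreasing case via the reflection), so $\nu$ is constant on $G_f$, hence $h\notin G_f$; since $B(g_0,\eta)$ was arbitrary, $G_f$ has empty interior. Fourth, to upgrade from ``empty interior'' to ``nowhere dense'', show $\overline{G_f}$ still avoids a dense set: because the dense set of maps realizing a ``new'' value of $\nu$ is open-ish — small perturbations of a transversal fixed point keep it — one gets an open dense set disjoint from $G_f$, so $\overline{G_f}$ has empty interior.

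\textbf{The main obstacle.} The delicate point is choosing an invariant that is simultaneously (a) a true conjugacy invariant under \emph{all} of $\mathcal H(I)$ including orientation-reversing homeomorphisms, (b) genuinely varied by arbitrarily small uniform perturbations of \emph{every} map in $\CP$ — not just generic ones — and (c) compatible with staying inside $\CP$ after the perturbation. Raw fixed-point count is clean under $\mathcal H(I)$ but one must ensure the new fixed point introduced by the perturbation survives (transversality) and that the perturbed map is still in $CP$; this is where the window perturbation of an $\hat f\in C_\lambda$ and the Barge--Martin structure (Proposition~\ref{prop:BM}) are used, as in the proof of Theorem~\ref{thm:leo}, to guarantee density of periodic points. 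A subtlety is that $f$ itself may already have infinitely many fixed points (e.g.\ $f$ could fix a whole interval), so the invariant must be chosen more carefully than a naive count — likely a \emph{local} invariant near some particular point, such as the germ-level sign pattern or the structure of $\per(f,2)$ in a small interval, which can always be altered by a localized perturbation regardless of the global behavior of $f$. I expect the bulk of the real work to be this local perturbation lemma: \emph{given any $g\in\CP$, any $\eps>0$, and any target value $c$ of the chosen local invariant at some scale, there is $g'\in\CP$ with $\rho(g,g')<\eps$ realizing $c$.} Once that is in hand, the nowhere-density of $G_f$ — and hence of $\overline{G_f}$, and hence the impossibility of a residual conjugacy class — follows formally.
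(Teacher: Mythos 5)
Your general strategy (find a conjugacy invariant that is constant on $G_f$ but can be changed robustly by small perturbations built from piecewise affine leo maps and window perturbations after passing through $C_\lambda$) is the right family of ideas, but the proposal has a genuine gap at its core: you never exhibit an invariant that works for \emph{every} $f\in\CP$, and your fallback does not work as stated. Fixed-point count (or the number of sign changes of $g-\mathrm{id}$) only separates robustly when that quantity is finite for $f$; as you note, $f$ may fix a whole interval, or more generally have infinitely many fixed points and infinitely many sign changes (Barge--Martin allows infinitely many mixing intervals accumulating on a set of $f^2$-fixed points), and then no perturbation can exceed the count, while ``finitely many fixed points'' is not an open condition, so you cannot robustly separate in the other direction either. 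Your proposed rescue --- a ``local invariant near some particular point'', such as the germ-level sign pattern or the structure of $\per(f,2)$ in a small interval --- is not a conjugacy invariant at all: a conjugating homeomorphism can carry your chosen small interval to an arbitrary subinterval of $I$, so to exclude conjugacy you would have to compare the local data of the perturbed map with the local data of $f$ near \emph{every} point, which is exactly the unproved ``local perturbation lemma'' you defer. The missing idea is that the only canonically anchored points are the endpoints: every $\psi\in\mathcal H(I)$ preserves or swaps $\{0,1\}$, so $\sgn\bigl(f^n(0)-f^n(1)\bigr)$ is invariant under conjugation by both increasing and decreasing homeomorphisms, for every $n$.

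This is what the paper's proof uses. Assuming $\overline{G_f}$ contains a nonempty open set, it moves that set by a conjugacy (which preserves $\overline{G_f}$) so that it contains a ball around a piecewise affine leo map $\alpha\in C_\lambda$ whose endpoints have preimages in $(0,1)$; since $\alpha$ is leo one can pick $n$ and preimages of $0$ and $1$ inside $(0,\delta)$ and $(1-\delta,1)$, and a window perturbation $\beta$ supported on $[0,\delta]\cup[1-\delta,1]$ that splices the orbits of $0$ and $1$ onto these preimage orbits, forcing $\beta^n(0)$ and $\beta^n(1)$ into the order \emph{strictly opposite} to that of $f^n(0)$ and $f^n(1)$. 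Since the invariant passes to uniform limits (uniform convergence of conjugates gives uniform convergence of their iterates), every element of $\overline{G_f}$ must have the weak form of $f$'s sign, and the strict opposite sign of $\beta$ gives the contradiction; note this handles the closure directly, so no separate ``open dense set disjoint from $G_f$'' upgrade is needed. Without an endpoint-anchored (or otherwise canonically anchored) invariant of this kind, your argument does not go through for general $f\in\CP$.
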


\begin{proof}Let us fix $f\in\CP$. For every $\psi\in \mathcal{H}(I)$ define $f_\psi:=\psi^{-1} \circ f \circ \psi\in \CP$. We start with three basic but useful observations:
\begin{itemize}
    \item[(A)] $\forall~\psi\in\mathcal{H}(I)$, $\sgn(f(0)-f(1))=\sgn(f_\psi(0)-f_\psi(1))$.  
    \item[(B)] If $f_{\psi_n}\rightrightarrows g$ then $f^m_{\psi_n}\rightrightarrows g^m$ for each $m\in\mathbb N$ (by induction  over $m$).
    \item[(C)] For each $\psi\in\mathcal{H}(I)$ the map $H_\psi\colon~ \CP\to \CP$ defined as $H_\psi(f)=\psi^{-1}\circ f\circ \psi$, $f\in\CP$, is a homeomorphism of $\CP$. 
 \end{itemize}
 Assume to the contrary that there exists a nonempty open set $U\subset\overline{G}_f\subset \CP$.  
  Notice that by (C) $H_{\psi}(U)$ is an open set in $\CP$.
 Using \cite[Remark p.2]{BCOT} there exist a homeomorphism $\psi \in \mathcal{H}(I)$ and a piecewise affine leo map $\alpha$ such that $\alpha\in H_\psi(U)\cap C_{\lambda}$ and each of the points $0$ and $1$ have a preimage outside the set $\{0,1\}$ using Remark~\ref{rem:leolambda}. Fix $\eps>0$ so small that $B(\alpha,\eps)\subset  H_{\psi}(U)$. There is a small $\delta >0$ such that each window perturbation of $\alpha$ on the intervals $[0,\delta]$, $[1-\delta,1]$ remains in $B(\alpha,\eps)$,  see Subsection~\ref{subsec:windowperturbations}.  For $u\in [0,1]$ and $n\in\mathbb N$ denote $$O_{\alpha}^{-n}(u):=\{x\in I\colon~ \alpha^{n}(x)=u\}.$$ 

Since $0$ and $1$ have  $\alpha$-preimages outside the set $\{0,1\}$, for each positive integer $n$ there is $x\in O_{\alpha}^{-n}(u)$, $u\in\{0,1\}$, such that $f^i(x)\notin \{0,1\}$ for $i=0,1,\ldots,n-1$.
Because $\alpha$ is leo, there exists $n$ so large that one can  choose $x_i,y_i$  for $i\in \{0,1\}$ such that
\begin{eqnarray*}
    x_0 &\in & O_{\alpha}^{-n}(0)\cap (0,\delta),\\
    x_1 &\in & O_{\alpha}^{-n}(1)\cap (0,\delta),\\
    y_0 &\in & O_{\alpha}^{-n}(0)\cap (1-\delta,1),\\
    y_1 &\in & O_{\alpha}^{-n}(1)\cap (1-\delta,1)
    \end{eqnarray*}
and define  
\begin{eqnarray*}
    S_0 &:=& \{s^0_1<s^0_2\cdots<s^0_{k_0}\}=\{x_0,\alpha(x_0),\dots,\alpha^{n-1}(x_0)\}\cap ([0,\delta]\cup [1-\delta,1]),\\
    S_1 &:= & \{s^1_1<s^1_2\cdots<s^1_{k_1}\}=\{x_1,\alpha(x_1),\dots,\alpha^{n-1}(x_1)\}\cap ([0,\delta]\cup [1-\delta,1]),\\
    T_0 &:= &\{t^0_1<t^0_2\cdots<t^0_{\ell_{0}}\}=\{y_0,\alpha(y_0),\dots,\alpha^{n-1}(y_0)\}\cap ([0,\delta]\cup [1-\delta,1]),\\
  T_1 & := & \{t^1_1<t^1_2\cdots<t^1_{\ell_{1}}\}=\{y_1,\alpha(y_1),\dots,\alpha^{n-1}(y_1)\}\cap ([0,\delta]\cup [1-\delta,1]).
    \end{eqnarray*}

As mentioned above these four sets are disjoint from the set $\{0,1\}$.

 Now we distinguish two cases:
    
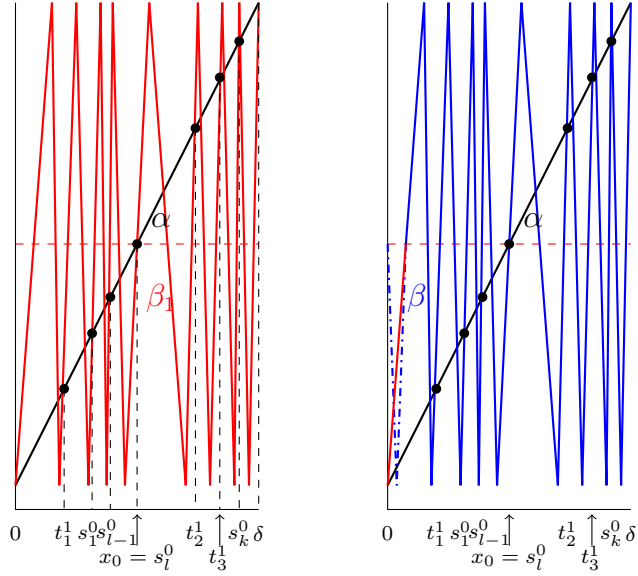
\begin{figure}
	\begin{tikzpicture}[scale=3.2]
	\draw (0,2.1)--(0,0)--(1,0);
	\draw[thick] (0,0.1)--(1,2.1);
	\draw[dashed,red] (0,1.1)--(1,1.1);
	\draw[thick, red] (0,0.1)--(0.15,2.1)--(0.18,0.1)--(0.25,2.1)--(0.3,0.1)--(0.35,2.1)--(0.375,0.1)--(0.8/2,2.1)--(0.9/2,0.1)--(0.5+0.05,2.1)--(0.5+0.2,0.1)--(0.5+0.25,2.1)--(0.5+0.3,0.1)--(0.5+0.35,2.1)--(0.5+0.4,0.1)--(0.92,2.1)--(0.96,0.1)--(1,2.1); 
	\node[circle,fill, inner sep=1.3] at (0.2,0.5){};
	\node[circle,fill, inner sep=1.3] at (0.315,0.73){};
	\node[circle,fill, inner sep=1.3] at (0.39,0.88){};
	\node[circle,fill, inner sep=1.3] at (0.5,1.1){};
	\node[circle,fill, inner sep=1.3] at (0.74,1.58){};
	\node[circle,fill, inner sep=1.3] at (0.84,1.79){};
	\node[circle,fill, inner sep=1.3] at (0.92,1.94){};
	\draw[dashed] (0.2,0.5)--(0.2,0);
	\draw[dashed] (0.315,0.73)--(0.315,0);
	\draw[dashed] (0.39,0.88)--(0.39,0);
	\draw[dashed] (0.5,1.1)--(0.5,0);
	\draw[dashed] (0.74,1.58)--(0.74,0);
	\draw[dashed] (0.84,1.79)--(0.84,0);
	\draw[dashed](0.92,1.95)--(0.92,0);
	\draw[dashed](1,2.1)--(1,0);
	\node at (0,-0.1) {\tiny $0$};
	\node at (0.2,-0.1) {\tiny $t^1_1$};
	\node at (0.3,-0.1) {\tiny $s^0_1$};
	\node at (0.42,-0.1) {\tiny $s^0_{l-1}$};
	\node at (0.5,-0.2) {\tiny $x_0=s^0_{l}$};
	\draw[->] (0.5,-0.15)--(0.5,-0.02);
	\node at (0.74,-0.1) {\tiny $t^1_{2}$};
	\node at (0.835,-0.2) {\tiny$t^1_{3}$};
	\draw[->] (0.84,-0.15)--(0.84,-0.02);
	\node at (0.92,-0.1) {\tiny$s^0_{k}$};
	\node at (1,-0.1) {\tiny $\delta$};
	\node at (0.6,7/8) {\red $\beta_1$};
	\node at (0.6,1.2) {$\alpha$};
\end{tikzpicture}
\hspace{1cm}
 \begin{tikzpicture}[scale=3.2]
	\draw (0,2.1)--(0,0)--(1,0);
	\draw[thick] (0,0.1)--(1,2.1);
	\draw[dashed,red] (0,1.1)--(1,1.1);
    \draw[ thick, blue, dash dot] (0,1.1)--(0.0375,0.1)--(0.075,1.1);
	\draw[thick, blue](0.075,1.1)--(0.15,2.1)--(0.18,0.1)--(0.25,2.1)--(0.3,0.1)--(0.35,2.1)--(0.375,0.1)--(0.8/2,2.1)--(0.9/2,0.1)--(0.5+0.05,2.1)--(0.5+0.2,0.1)--(0.5+0.25,2.1)--(0.5+0.3,0.1)--(0.5+0.35,2.1)--(0.5+0.4,0.1)--(0.92,2.1)--(0.96,0.1)--(1,2.1);
    \draw[thick,red] (0,0.1)--(0.075,1.1);
	\node[circle,fill, inner sep=1.3] at (0.2,0.5){};
	\node[circle,fill, inner sep=1.3] at (0.315,0.73){};
	\node[circle,fill, inner sep=1.3] at (0.39,0.88){};
	\node[circle,fill, inner sep=1.3] at (0.5,1.1){};
	\node[circle,fill, inner sep=1.3] at (0.74,1.58){};
	\node[circle,fill, inner sep=1.3] at (0.84,1.79){};
	\node[circle,fill, inner sep=1.3] at (0.92,1.94){};
	\node at (0,-0.1) {\tiny $0$};
	\node at (0.2,-0.1) {\tiny $t^1_1$};
	\node at (0.3,-0.1) {\tiny $s^0_1$};
	\node at (0.42,-0.1) {\tiny $s^0_{l-1}$};
	\node at (0.5,-0.2) {\tiny $x_0=s^0_{l}$};
	\draw[->] (0.5,-0.15)--(0.5,-0.02);
	\node at (0.74,-0.1) {\tiny $t^1_{2}$};
	\node at (0.835,-0.2) {\tiny$t^1_{3}$};
	\draw[->] (0.84,-0.15)--(0.84,-0.02);
	\node at (0.92,-0.1) {\tiny$s^0_{k}$};
	\node at (1,-0.1) {\tiny $\delta$};
	\node at (0.12,7/8) {\blue $\beta$};
	\node at (0.6,1.2) {$\alpha$};
\end{tikzpicture}
	\caption{\label{fig:conjugacyclasses} The maps $\alpha$ (black) and $\beta$ constructed in two steps ($\beta_1$ in red, then modified to $\beta$ in blue) from the proof of Theorem \ref{conj-nowhere} on the interval $[0,\delta]$ with $x_0=s^0_{\ell}$. On the interval $[{1-\delta,1}]$ the redefinition of $S_0$ and $T_1$ is analogous.}
\end{figure} 

\noindent  {\bf{I.}} $\sgn(f^n(0)-f^n(1))\in\{1,0\}$. As it is shown in Figure~\ref{fig:conjugacyclasses} in this case there exists a window perturbation $\beta$ of $\alpha$ on the intervals $[0,\delta]$, $[1-\delta,1]$ such that $\alpha$ and $\beta$ coincide on $S_0$ and $T_1$, hence 
\begin{eqnarray*}
    &\beta^i(0)=\alpha^i(x_0)
    & \text{ for each } i\in\{1,\dots,n\} \text{ and also } \\
    &\beta^i(1)=\alpha^i(y_1)
    & \text{ for each } i\in\{1,\dots,n\};
    \end{eqnarray*} 
then $$\sgn(f^n(0)-f^n(1))\neq \sgn(\beta^n(0)-\beta^n(1))=-1$$
hence from (A),(B) follows $\beta\notin B(\alpha,\eps)$, a contradiction.

\noindent   {\bf{II.}} $\sgn(f^n(0)-f^n(1))\in\{0,-1\}$. In this case there exists a window perturbation $\beta$ of $\alpha$ on the intervals $[0,\delta]$, $[1-\delta,1]$ such that $\alpha$ and $\beta$ coincide on $S_1$ and $T_0$, hence 
\begin{itemize}
    \item $\beta^i(0)=\alpha^i(x_1)$ for each $i\in\{1,\dots,n\}$ and also 
    \item $\beta^i(1)=\alpha^i(y_0)$ for each $i\in\{1,\dots,n\}$;
    \end{itemize} 
then $$\sgn(f^n(0)-f^n(1))\neq \sgn(\beta^n(0)-\beta^n(1))=1$$
and from (A),(B) one again gets $\beta\notin B(\alpha,\eps)$, a contradiction.

We have shown that the set $\overline{G}_f$ has an empty interior, i.e., $G_f$ is nowhere dense. 
\end{proof}
 
A subset $A$ of a Baire space $X$ has the {\it Baire property} if there is an open set $U \subset X$ such that the symmetric difference
$A \triangle U$ is meager.
 A subset of $X$ is called {\em analytic} if it is a continuous image of a Polish space. 
It is well known that every analytic set in a Polish space has the Baire property, e.g., see \cite[Theorem 4.3.2]{sriva}.

Let us define the metric $\tilde{\rho}$ on $\mathcal{H}(I)$ as follows. For $h_1,h_2\in \mathcal{H}(I)$ 
\begin{equation*}
\tilde{\rho}(h_1,h_2)=\rho(h_1,h_2)+\rho(h_1^{-1},h_2^{-1}).
\end{equation*}
Note that the metric $\tilde{\rho}$ is a complete metric in $\mathcal{H}(I)$.

\begin{lemma}\label{lem:Rbaire}
The set $\{(x,y): x\mathcal{R}y\}$
is an analytic subset of $\CP\times \CP$. In particular it has the Baire property.
\end{lemma}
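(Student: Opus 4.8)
The plan is to exhibit $\mathcal{R}$ as a continuous image of a Polish space, which immediately yields analyticity and hence the Baire property by the cited theorem. The natural candidate for the source space is $\CP \times \mathcal{H}(I)$, where $\mathcal{H}(I)$ is equipped with the complete metric $\tilde\rho$ just introduced. First I would observe that $\CP$ is a complete metric space (stated earlier: $\CP$ is closed in $C(I)$) and that $(\mathcal{H}(I),\tilde\rho)$ is complete (remarked just above), so the product $\CP\times \mathcal{H}(I)$ is a Polish space.

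Next I would define the map $\Phi\colon \CP\times \mathcal{H}(I)\to \CP\times \CP$ by $\Phi(f,\psi) := (f,\ \psi^{-1}\circ f\circ \psi)$. By the definition of $G_f$ and of the conjugacy relation $\mathcal{R}=\bigcup_{f\in\CP}\{f\}\times G_f$, the image of $\Phi$ is exactly $\{(x,y): x\mathcal{R}y\}$: indeed $(f,g)\in\mathcal{R}$ iff $g=\psi^{-1}\circ f\circ\psi$ for some $\psi\in\mathcal{H}(I)$, which is precisely the condition that $(f,g)$ lie in the range of $\Phi$. (One should also note $\psi^{-1}\circ f\circ\psi\in\CP$ whenever $f\in\CP$, since conjugacy by a homeomorphism preserves the property of being chain-recurrent, by Theorem~\ref{ChainRecurrentThenCPClosure}; this is exactly why $H_\psi$ maps $\CP$ to $\CP$ as used in observation (C) of the previous proof.)

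Then I would check that $\Phi$ is continuous. The first coordinate is trivially continuous. For the second coordinate, suppose $(f_n,\psi_n)\to (f,\psi)$ in $\CP\times\mathcal{H}(I)$; I must show $\psi_n^{-1}\circ f_n\circ \psi_n\rightrightarrows \psi^{-1}\circ f\circ\psi$. Writing the difference and inserting intermediate terms, one estimates
\begin{align*}
\rho(\psi_n^{-1}\circ f_n\circ \psi_n,\ \psi^{-1}\circ f\circ \psi)
&\le \rho(\psi_n^{-1}\circ f_n\circ \psi_n,\ \psi^{-1}\circ f_n\circ \psi_n) \\
&\quad + \rho(\psi^{-1}\circ f_n\circ \psi_n,\ \psi^{-1}\circ f\circ \psi_n) \\
&\quad + \rho(\psi^{-1}\circ f\circ \psi_n,\ \psi^{-1}\circ f\circ \psi).
\end{align*}
The first term is bounded by $\rho(\psi_n^{-1},\psi^{-1})\le \tilde\rho(\psi_n,\psi)\to 0$. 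The second term is bounded by $\rho(f_n,f)\to 0$ using that $\psi^{-1}$ is (uniformly) continuous precomposition — actually $\psi^{-1}$ applied pointwise does not increase distances only if $\psi^{-1}$ is $1$-Lipschitz, which it need not be, so instead I use uniform continuity of $\psi^{-1}$ on the compact interval $I$: given $\eps$, there is $\eta$ with $|\psi^{-1}(s)-\psi^{-1}(t)|<\eps$ whenever $|s-t|<\eta$, and for $n$ large $\rho(f_n,f)<\eta$. The third term tends to $0$ by uniform continuity of $f\circ$ (again $f$ is uniformly continuous on $I$) together with $\rho(\psi_n,\psi)\to 0$. So $\Phi$ is continuous.

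Finally, since $\Phi$ is a continuous map from the Polish space $\CP\times\mathcal{H}(I)$ onto $\{(x,y):x\mathcal{R}y\}$, this set is analytic in $\CP\times\CP$; and every analytic subset of a Polish space has the Baire property by \cite[Theorem 4.3.2]{sriva}. The main (and really the only) point requiring care is the continuity verification, specifically handling the term $\rho(\psi_n^{-1}\circ f_n\circ\psi_n,\psi^{-1}\circ f_n\circ\psi_n)$ — this is exactly where the metric $\tilde\rho$ (rather than the plain uniform metric $\rho$) on $\mathcal{H}(I)$ is needed, since one must control $\psi_n^{-1}\to\psi^{-1}$ uniformly; the rest is routine use of uniform continuity of maps on the compact interval $I$. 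Everything else is bookkeeping.
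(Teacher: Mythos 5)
Your proof is correct and follows essentially the same route as the paper: both parametrize the conjugacy relation over the Polish space $\mathcal{H}(I)$ with the complete metric $\tilde\rho$ and realize $\mathcal{R}$ inside $\CP\times\CP$ as an analytic set, the paper by projecting the closed set $A=\{(f,g,h):g=h^{-1}\circ f\circ h\}$ and invoking Kechris (14.3), you by exhibiting $\mathcal{R}$ directly as the image of the continuous map $\Phi(f,\psi)=(f,\psi^{-1}\circ f\circ\psi)$. Your explicit continuity estimate is exactly the verification hidden in the paper's assertion that $A$ is closed, so the two arguments differ only in packaging.
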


\begin{proof}
Let 
$$A:=\{(f,g,h)\in (\CP)^2\times \mathcal{H}(I) : \text{ where }  g=h^{-1}\circ f\circ h \}$$ where we use the metric $\rho$ on $\CP$ and $\tilde{\rho}$ on $\mathcal{H}(I)$.  
Note that $A$ is a closed set in $(\CP)^2\times \mathcal{H}(I)$. 
The 
projection of $A$ onto $(\CP)^2$ yields the set $\{(x,y): x\mathcal{R}y\}$.
By (14.3) from Kechris~\cite{Kechris} 
the set 
$\{(x,y): x\mathcal{R}y\}$
is analytic is equivalent to the fact that there is a Polish space $Y$ and $B\subset (\CP^2)\times Y$ Borel such that $\mathcal{R}=\pi_{\CP^2}(B)$. Using the if direction
with $Y = \mathcal{H}(I)$ yields that the set
$\{(x,y): x\mathcal{R}y\}$ is analytic.
\end{proof}

Let $E$ be an equivalence relation on a Polish space $X$. We say that there
are {\it perfectly many $E$-equivalence classes} if there is a perfect set $A\subset X$ such that for any $x,y\in A$, $xEy$ implies $x = y$.
We obtain the following 
consequence of Theorem \ref{conj-nowhere}.

\begin{corollary}\label{cor:conjugacyclasses}
Every second category set $G\subset \CP$ contains uncountably many non-conjugate maps. The set $\CP \times \CP$
contains perfectly many conjugacy classes. 
\end{corollary}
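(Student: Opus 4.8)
The plan is to derive Corollary~\ref{cor:conjugacyclasses} from Theorem~\ref{conj-nowhere} and Lemma~\ref{lem:Rbaire} by a standard dichotomy argument for analytic equivalence relations. First I would recall that $\CP$ is a Polish space (it is a closed subset of the complete space $C(I)$, hence completely metrizable, and it is separable), so $\CP\times\CP$ is Polish as well. By Lemma~\ref{lem:Rbaire} the conjugacy relation $\R$ is an analytic equivalence relation on $\CP$; in particular every equivalence class $G_f$ is an analytic set, hence has the Baire property. By Theorem~\ref{conj-nowhere} each $G_f$ is nowhere dense, hence meager. Now suppose, for contradiction, that some second category set $G\subset\CP$ meets only countably many conjugacy classes. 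Then $G\subset\bigcup_{n}G_{f_n}$ for countably many $f_n$, and this countable union of meager sets is meager, contradicting that $G$ is of second category. This proves the first assertion.

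For the second assertion I would invoke the topological Silver/Burgess dichotomy for analytic (or even coanalytic) equivalence relations on a Polish space: such a relation either has only countably many equivalence classes or else has perfectly many classes, i.e.\ there is a perfect set of pairwise inequivalent points (see, e.g., \cite[Theorem 3.4.1, 3.4.3]{sriva} or the classical Silver theorem for $\Pi^1_1$ equivalence relations, strengthened to the analytic setting under Baire-property hypotheses, as in Burgess). Since $\R$ has the Baire property by Lemma~\ref{lem:Rbaire}, the dichotomy applies. To exclude the first alternative, note that if $\R$ had only countably many classes then $\CP=\bigcup_n G_{f_n}$ would be a countable union of nowhere dense sets by Theorem~\ref{conj-nowhere}, contradicting the Baire category theorem since $\CP$ is a nonempty complete metric space. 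Hence there is a perfect set $A\subset\CP$ all of whose elements are pairwise non-conjugate, which is exactly the statement that $\CP\times\CP$ contains perfectly many conjugacy classes. (The perfect set $A$ also reproves the first assertion, since a perfect set is uncountable, and any second category set must meet uncountably many of the classes represented in $A$ — but the direct meager-union argument above is cleaner.)

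The main obstacle is citing the correct form of the dichotomy theorem: the cleanest classical statement (Silver's theorem) is for $\Pi^1_1$ equivalence relations and gives the perfect-set alternative unconditionally, whereas here $\R$ is $\Sigma^1_1$ (analytic). For analytic equivalence relations one needs the version that invokes the Baire property — this is exactly why Lemma~\ref{lem:Rbaire} explicitly records that $\R$ has the Baire property. One should therefore cite Burgess's theorem (an analytic equivalence relation has either $\le\aleph_1$ classes or perfectly many) together with the observation that the ``$\le\aleph_1$ classes'' case still forces, under the Baire property, either countably many classes or a perfect set of inequivalent elements; alternatively, and more robustly, one can bypass the heavier descriptive-set-theoretic machinery by a direct Mycielski-style construction of the perfect set, building a Cantor scheme of basic open sets $U_s\subset\CP$, $s\in 2^{<\omega}$, with shrinking diameters such that $U_{s0}$ and $U_{s1}$ have disjoint images under finitely many compositions with homeomorphisms (using that each $\overline{G_f}$ has empty interior to keep choosing points outside the relevant conjugacy classes at each finite stage, exactly as in the proof of Theorem~\ref{conj-nowhere}); the branches of the scheme then yield a perfect set of pairwise non-conjugate maps. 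I would present the argument via the cited dichotomy for brevity, flagging the Baire-property hypothesis, and remark that the Mycielski-type construction gives a self-contained alternative.
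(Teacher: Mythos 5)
Your proof of the first assertion is correct and is exactly the paper's argument: by Theorem~\ref{conj-nowhere} each class $G_f$ is nowhere dense, so a countable union of classes is of first category and cannot cover a second category set.

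The second assertion is where your argument has a genuine gap. The Silver/Burgess dichotomy is not the right tool, and the ``observation'' you interpose to repair it is false in general. Every analytic set has the Baire property, so \emph{every} analytic equivalence relation satisfies the hypothesis ``$\R$ has the BP''; nevertheless there are analytic equivalence relations with exactly $\aleph_1$ classes and no perfect set of pairwise inequivalent elements (the standard example: on codes of binary relations on $\N$, put $xEy$ iff both code ill-founded orders, or both code well-orders of the same order type; boundedness rules out a perfect set of inequivalent points). Hence ``BP plus more than countably many classes'' does not yield perfectly many classes, and your Baire-category exclusion of the countable case does not bridge Burgess's trichotomy to the desired conclusion. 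The hypothesis that actually does the work is that \emph{every class is meager} (here: nowhere dense, by Theorem~\ref{conj-nowhere}). From this one argues: since $\R\subset\CP\times\CP$ has the BP (Lemma~\ref{lem:Rbaire}) and every vertical section $G_f$ is meager, Kuratowski--Ulam gives that $\R$ itself is meager in the square; then Mycielski's theorem on meager relations over a perfect Polish space produces a perfect set of pairwise non-conjugate maps. This ``BP $+$ all classes meager $\Rightarrow$ perfectly many classes'' statement is exactly what the paper cites as \cite[Corollary 5.3.3]{SG}, so the citation is doing real work that the dichotomy cannot replace.

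Your fallback Cantor-scheme construction is the right idea in spirit (it is Mycielski's argument), but as sketched it is also incomplete: arranging at each finite stage that $U_{s0}$, $U_{s1}$ avoid finitely many conjugacy classes, or have disjoint images under finitely many homeomorphisms, does not prevent two branch limits from being conjugate via some homeomorphism never treated at any finite stage --- the conjugating group is uncountable and $\R$ is not closed. The scheme must instead avoid, at stage $n$, the $n$-th closed nowhere dense set $F_n$ in a meager cover $\R\subset\bigcup_n F_n$; and to obtain such a cover one must first establish that $\R$ is meager in $\CP\times\CP$, i.e., one cannot bypass the Kuratowski--Ulam step based on Lemma~\ref{lem:Rbaire}.
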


\begin{proof}  The first assertion follows from Theorem \ref{conj-nowhere} since
each union of countably many conjugacy classes is a set of the first category. 
Since $\{(f,g) \in (\CP)^2: f\R g\}$  has the Baire property, then \cite[Corollary 5.3.3]{SG} shows that
this set contains perfectly many $\R$ conjugacy classes.
\end{proof}

\begin{remark}\label{rem:LebegueConjugacyClasses}
Adapting the preceding two proofs easily leads to results analogous to Theorem~\ref{conj-nowhere} and Corollary~\ref{cor:conjugacyclasses} in the setting of $C_{\lambda}$ or $CP$. By Remark~\ref{rem:psihomeo} the conclusions of Theorem~\ref{conj-nowhere} and Corollary~\ref{cor:conjugacyclasses} hold also for $C_{\mu}$. 
\end{remark}

 In \cite{CO} the authors constructed a parameterized family of planar homeomorphisms arising from a residual set $\mathcal{T}$ of functions from $C_{\lambda}$. The maps from $\mathcal{T}$ share several dynamical properties (shadowing, topological exactness, $\delta$-crookedness, the same structure of the sets of periodic points, etc.), hence are  indistinguishable by numerous conjugacy invariants. In this context the question
  if $\mathcal{T}$ could be contained in a single conjugacy class naturally arose.  Corollary~\ref{cor:conjugacyclasses} and Remark~\ref{rem:LebegueConjugacyClasses} provide a negative answer to this question.

\subsection{Shadowing is generic in $\CP$}\label{subsec:shadowing}

Now let us recall the definition of shadowing and periodic shadowing. For some $\delta> 0$, a sequence $(x_n)_{n\in \N_0}\subset I$ is called \emph{$\delta$-pseudo orbit} of $f\in C(I)$ if $d(f(x_n), x_{n+1})< \delta$ for every $n\in \N_0$. A \emph{periodic $\delta$-pseudo orbit} is a $\delta$-pseudo orbit for which there exists $N\in\N_0$ such that $x_{n+N}=x_n$, for all $n\in \N_0$. We call the sequence $(x_n)_{n\in\N_0}$ an \emph{asymptotic pseudo orbit} if $\lim_{n\to\infty} d(f(x_n),x_{n+1})=0$.

\begin{definition}\label{def:shadowing}
We say that a map $f\in C(I)$ has the:
\begin{itemize}
 \item \emph{shadowing property} if for every $\eps > 0$ there exists $\delta >0$ satisfying the following condition: given a $\delta$-pseudo orbit $\mathbf{y}:=(y_n)_{n\in \N_0}$ we can find a corresponding point $x\in I$ which $\eps$-traces $\mathbf{y}$, i.e.,
$$d(f^n(x), y_n)<  \eps \text{ for every } n\in \N_0.$$
\item
\emph{periodic shadowing property} if for every $\eps>0$ there exists $\delta>0$ satisfying the following condition: given a periodic $\delta$-pseudo orbit $\mathbf{y}:=(y_n)_{n\in\N_0}$ we can find a corresponding periodic point $x \in I$, which $\eps$-traces $\mathbf{y}$.
\end{itemize}
\end{definition}

\begin{lemma}[Lemma 22 from \cite{BCOT}]\label{lem:shadowingdense}
For every $\eps>0$ and every map $f\in C_\lambda$
 there are
$\delta<\frac{\eps}{2}$ and $F\in C_{\lambda}$ such that:
\begin{enumerate}
\item $F$ is piecewise affine and $\rho(f,F)<\frac{\eps}{2}$,
\item if $g\in C_\lambda$ and $\rho(F,g)<\delta$ then every $\delta$-pseudo orbit $\mathbf{x}:=\{x_i\}_{i=0}^\infty$ for $g$ is $\eps$-traced by a point $z\in I$.
Furthermore, if $\mathbf{x}$ is a periodic sequence, then $z$ can be chosen to be a periodic point.
\end{enumerate}
\end{lemma}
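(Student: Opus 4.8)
The plan is to combine a density statement with a robustness statement. Call a piecewise affine $F\in C_\lambda$ \emph{piecewise expanding} if every branch of $F$ has slope of modulus strictly larger than $1$. I would prove: (i) piecewise expanding maps are dense in $C_\lambda$; and (ii) if $F\in C_\lambda$ is piecewise expanding, then for every $\eps>0$ there is $\delta>0$ such that every $g$ with $\rho(F,g)<\delta$ has $(\delta,\eps)$-shadowing, and moreover every periodic $\delta$-pseudo orbit of $g$ is $\eps$-traced by a periodic point. Granting (i) and (ii), the lemma follows: given $f$ and $\eps$, use (i) to pick a piecewise expanding $F$ with $\rho(f,F)<\eps/2$, then (ii) to get the corresponding $\delta$, shrinking it further so that $\delta<\eps/2$.

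For (i): first approximate $f$ by a piecewise affine $F_1\in C_\lambda$ with $\rho(f,F_1)<\eps/4$, using density of piecewise affine maps in $C_\lambda$ (see \cite[Proposition 8]{BT}). Then refine the partition of $F_1$ so that on each piece $J=[b,b']$ the map $F_1$ is affine with $\lambda(F_1(J))<\eps/4$, and on every piece with $|F_1'|\le 1$ replace $F_1|_J$ by a regular $m$-fold window perturbation (Subsection~\ref{subsec:windowperturbations}) for an odd $m$ with $m|F_1'|>1$ (note $|F_1'|>0$ on each piece, since maps in $C_\lambda$ have no flat parts). Such a window perturbation is an arrangement of affinely rescaled copies of $F_1|_J$ over $m$ equal subintervals of $J$; hence it has the same image $F_1(J)$, the same values at $b$ and $b'$ (so the global map stays continuous), it lies in $C_\lambda$, it differs from $F_1$ on $J$ by at most $\lambda(F_1(J))<\eps/4$, and all of its new branches have slope of modulus $m|F_1'|>1$. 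Carrying this out on every offending piece produces the required piecewise expanding $F\in C_\lambda$ with $\rho(f,F)<\eps/2$; set $\lambda_0:=\min|F'|>1$.

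For (ii): let $0=c_0<\dots<c_k=1$ list all turning points of $F$ and $\eta_0$ the least gap between consecutive ones. Given $\eps>0$, fix $r\in(0,\min\{\eps,\eta_0/3\})$ and then $\delta>0$ small in terms of $r$, $\lambda_0$ and $\max|F'|$ (and $<\eps/2$). For $g$ with $\rho(F,g)<\delta$ and a $\delta$-pseudo orbit $(x_i)$ of $g$ (hence a $2\delta$-pseudo orbit of $F$), set $J_i:=B(x_i,r)\cap I$. The core step is to construct, by induction on $i$, nonempty closed intervals $L_i\subseteq J_i$ with $L_0=J_0$, $L_{i+1}\subseteq g(L_i)$ and $\lambda(L_i)\ge r/2$: if $\overline{J_i}$ contains no turning point, then $F|_{\overline{J_i}}$ is affine with slope $\ge\lambda_0$ in modulus, and the intermediate value theorem applied to $g$ gives $g(L_i)\supseteq J_{i+1}$; if $\overline{J_i}$ contains a turning point (at most one, since $2r<\eta_0$), the folded branch still forces $g(L_i)$ to contain an interval of length $\ge r/2$ that meets $J_{i+1}$, and one takes this as $L_{i+1}$ (the endpoints $0,1$ of $I$ behave like the monotone case). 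Pulling a point of $L_n$ back one step at a time yields $z_n\in J_0$ with $g^i(z_n)\in J_i$ for $i\le n$; a subsequential limit $z$ then satisfies $d(g^i(z),x_i)\le r<\eps$ for all $i$, i.e. $z$ $\eps$-traces $(x_i)$. For a periodic pseudo orbit of period $N$ the tubes $J_i$ are $N$-periodic, and iterating the construction around the period gives a nested sequence of nonempty closed subintervals of $J_0$ (the valid starting points after $n$ loops) whose intersection $M$ satisfies $g^N(M)\supseteq M$; hence $g^N$ has a fixed point in $M$, which is a periodic point $\eps$-tracing $(x_i)$. Note that this whole argument uses only $\rho(F,g)<\delta$ and the expansion of $F$, not $g\in C_\lambda$.

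The main obstacle will be the turning-point case in (ii). When the pseudo orbit passes near a critical value of $F$ from the side into which $F$ does not fold --- say slightly above a local maximum $F(c)$ --- the graph of $g$ over $\overline{J_i}$ rises only a little above $F(c)$, so $g(\overline{J_i})$ need not contain all of $J_{i+1}$ and the naive invariant ``$g(J_i)\supseteq J_{i+1}$'' fails. The remedy is to propagate the weaker quantitative invariant $\lambda(L_i)\ge r/2$ through the induction, and the real work is verifying that this invariant survives both the monotone and the folding step with a single $\delta=\delta(F,\eps)$; once those thresholds are fixed, the remainder is routine bookkeeping. (The other ingredients --- that a window perturbation can simultaneously steepen a flat-ish branch and remain in $C_\lambda$, and that piecewise expanding interval maps have the shadowing property to begin with --- are standard, the former from the construction recalled in Subsection~\ref{subsec:windowperturbations}.)
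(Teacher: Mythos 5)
Your part (i) is fine (and is essentially the standard window-perturbation argument), but part (ii) is false, and this is not a repairable technicality. Taking $g=F$ in (ii), your claim asserts in particular that every piecewise expanding map has the shadowing property. That is not true in the presence of turning points: whether a pseudo-orbit that passes slightly above a critical value $F(c)$ can be traced depends on the combinatorics of the forward orbit of that critical value, which is exactly the content of Chen's linking criterion (Theorem~\ref{thm:chen}); for tent maps, for instance, shadowing fails for a dense set of slopes in $(\sqrt2,2)$. Indeed, the present paper exploits precisely this mechanism in the proof of Theorem~\ref{thm:shadowing}: a piecewise affine leo map in $C_\lambda$ is perturbed so that an endpoint becomes eventually periodic with orbit disjoint from the turning points, which destroys the linking property and hence shadowing --- and nothing in that construction is incompatible with all slopes having modulus strictly greater than $1$. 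So ``piecewise expanding'' cannot be the hypothesis that makes (ii) work, robustly or otherwise.

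The failure shows up exactly where you flagged it, but your fix does not survive scrutiny: the invariant ``$L_i\subseteq J_i$ and $\lambda(L_i)\ge r/2$'' carries no information about where $L_i$ sits inside $J_i$. Once $L_i$ is a proper subinterval of length about $r/2$, even the monotone-case claim $g(L_i)\supseteq J_{i+1}$ is false: the image has length only about $\lambda_0 r/2$, which cannot contain $J_{i+1}$ (of length $2r$) unless $\lambda_0\ge 4$, and, worse, its position relative to $x_{i+1}$ is uncontrolled up to $\max|F'|\cdot 2r$, so after a folding step the next image may miss the next tube entirely; the induction therefore does not close with a single $\delta=\delta(F,\eps)$. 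This is why the proof of Lemma 22 in \cite{BCOT} (whose key covering conditions (a)--(c) are recalled in the paper immediately after the statement) does not start from an arbitrary expanding approximation: there $F$ is built with a fine partition $\{a_p\}$ and a Markov-type ``full branch image'' structure, and the tubes are chosen so that $F(J_i)=F([a_p,a_{p+1}])$ with $x_i\in[a_p,a_{p+1}]$, requiring only $\mathrm{dist}(x_i,J_i)<\gamma$ rather than $J_i\supseteq B(x_i,r)$. It is this big-image invariant of the tubes --- not a length lower bound --- that survives passage over turning points and yields the uniform $\delta$ valid for all $g$ with $\rho(F,g)<\delta$. In short, the density statement you prove is the easy half; the substance of the lemma lies in the special structure of $F$, which your proposal replaces by bare expansion.
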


In fact, a close study of the proof  of Lemma 22 from \cite{BCOT} reveals that in statement (2) we may consider wider neighborhood of maps $g\in C(I)$, $\rho(F,g)<\delta$. We explain this in more detail in the following paragraph.

For an interval $J\subset I$ let $\diam(J):=\sup\{d(x,y): x,y\in J\}$.
Take any $g\in C_\lambda$ such that $\rho(F,g)<\delta$ and let $\mathbf{x}:=\{x_i\}_{i=0}^\infty$ be a $\delta$-pseudo orbit for $g$. In the proof we claim that there is a sequence of closed intervals $J_i$ such that
\begin{enumerate}[(a)]
\item $\diam J_i \leq \gamma$ and if $i>0$ then $J_{i}\subset g(J_{i-1})$,\label{con:s1}
\item $\mathrm{dist}(x_{i},J_i)< \gamma$,\label{con:s2}
\item\label{c:3} for every $i$ there is $p$ such that $F(J_i)=F([a_p,a_{p+1}])$ and $x_i\in [a_p,a_{p+1}]$\label{con:s3}
\end{enumerate}
provided that $g\in C_\lambda$ and $\rho(F,g)<\delta$,
where points $a_i$ are endpoints of a partition of $I$ and $\gamma>0$ is a sufficiently small constant.
The claim is proved simply by the definition of uniform metric and covering of intervals (by continuity of $F$), so in fact the conditions hold for any 
$g\in C(I)$, $\rho(F,g)<\delta$, in particular for $g\in \overline{CP}$. Now the tracing point is obtained as any $z\in \cap g^{-n}(J_n)\neq \emptyset$.

Therefore perturbing the original map $f\in \CP$ to a map $\hat{f}\in CP$, passing through conjugacy of $\hat f$ from $CP$ into $C_\lambda$, applying Lemma~\ref{lem:shadowingdense} (with sufficiently decreased constants) and then passing back  to $CP$ (which gives coverings satisfying \ref{con:s1}) we obtain the following adaptation of the result to the $CP$ case. 

\begin{lemma}\label{lem:shadowingdense2}
For every $\eps>0$ and every map $f\in \CP$
 there are
$\delta<\frac{\eps}{2}$ and $F\in CP$ such that:
\begin{enumerate}
\item $\rho(f,F)<\frac{\eps}{2}$,
\item if $g\in \CP$ and $\rho(F,g)<\delta$ then every $\delta$-pseudo orbit $\mathbf{x}:=\{x_i\}_{i=0}^\infty$ for $g$ is $\eps$-traced by a point $z\in I$.
Furthermore, if $\mathbf{x}$ is a periodic sequence, then $z$ can be chosen to be a periodic point.
\end{enumerate}
\end{lemma}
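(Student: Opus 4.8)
The plan is to transfer Lemma~\ref{lem:shadowingdense} from the $C_\lambda$ setting to the $\CP$ setting by composing conjugacy reductions with the observation (made in the paragraph preceding the statement) that the tracing conclusion in Lemma~\ref{lem:shadowingdense} actually holds for all $g\in C(I)$ sufficiently $\rho$-close to $F$, not just for $g\in C_\lambda$. First I would fix $\eps>0$ and $f\in\CP$. Since $CP$ is dense in $\CP$, choose $\hat f\in CP$ with $\rho(f,\hat f)<\eps/6$; then by \cite[Remark p.~2]{BCOT} pick an invariant non-atomic full-support measure $\mu$ for $\hat f$ and set $\psi(x)=\mu([0,x])$, so that $g_0:=\psi\circ\hat f\circ\psi^{-1}\in C_\lambda$. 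Because $\psi$ is a fixed homeomorphism of $I$, conjugation by $\psi$ and by $\psi^{-1}$ is uniformly continuous on $C(I)$ in the $\rho$ metric; hence there is $\eps'>0$ such that $\rho(G,g_0)<\eps'$ implies $\rho(\psi^{-1}\circ G\circ\psi,\hat f)<\eps/6$, and similarly a modulus of continuity controlling how $\rho$-balls shrink or grow under conjugation by $\psi^{\pm1}$.

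Next I would apply Lemma~\ref{lem:shadowingdense} to $g_0\in C_\lambda$ with a target accuracy $\eps''$ chosen small enough that an $\eps''$-tracing after pulling back through $\psi^{-1}$ becomes an $\eps/2$-tracing; this produces $\delta_0<\eps''/2$ and a piecewise affine $F_0\in C_\lambda$ with $\rho(g_0,F_0)<\eps''/2$ having the tracing property against all $g$ with $\rho(F_0,g)<\delta_0$. Crucially, by the discussion preceding the lemma the intervals $J_i$ satisfying \eqref{con:s1}--\eqref{con:s3} can be built for any $g\in C(I)$ with $\rho(F_0,g)<\delta_0$, and the tracing point is any $z\in\bigcap_n g^{-n}(J_n)$, which is nonempty by compactness and \eqref{con:s1}; when $\mathbf x$ is periodic one closes up the chain of intervals to get a periodic tracing point exactly as in \cite{BCOT}. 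I would then set $F:=\psi^{-1}\circ F_0\circ\psi$. Since $F_0$ is leo (being piecewise monotone and topologically mixing, as piecewise affine maps arising in that construction are) or at least has dense periodic points, and $CP$ is preserved under conjugation, $F\in CP$; and $\rho(f,F)\le\rho(f,\hat f)+\rho(\hat f,\psi^{-1}\circ F_0\circ\psi)<\eps/6+\eps/6<\eps/2$ after arranging the constants, while $\rho(g_0,F_0)<\eps''/2$ translates to the bound we need. Finally, given $g\in\CP$ with $\rho(F,g)<\delta$ for a suitably small $\delta$ depending on the modulus above, the conjugate $\psi\circ g\circ\psi^{-1}\in C(I)$ lies within $\delta_0$ of $F_0$, so every $\delta$-pseudo orbit of $g$ maps under $\psi$ to a $\delta_0$-pseudo orbit of $\psi\circ g\circ\psi^{-1}$ (again controlling $\delta$ via uniform continuity of $\psi$ so that $\delta<\eps/2$ and the pseudo-orbit constant is preserved), is $\eps''$-traced there by the extended Lemma~\ref{lem:shadowingdense}, and pulling the tracing point back through $\psi^{-1}$ gives an $\eps$-tracing point for $g$, periodic when the pseudo orbit is periodic.

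The main obstacle, and the only genuinely non-formal point, is bookkeeping the three different metrics and the distortion introduced by conjugating with the fixed but non-affine homeomorphism $\psi$: one must choose the intermediate accuracies $\eps'',\delta_0$ and the final $\delta$ in the right order so that (i) $\rho(f,F)<\eps/2$, (ii) a $\delta$-pseudo orbit of $g\in\CP$ becomes a bona fide $\delta_0$-pseudo orbit of the $\psi$-conjugate, and (iii) an $\eps''$-tracing upstairs becomes an $\eps$-tracing downstairs — all simultaneously. This is routine once one records a modulus of uniform continuity for $\psi$ and $\psi^{-1}$, but it is where care is needed. Everything else is a direct quotation of Lemma~\ref{lem:shadowingdense} together with the already-noted fact that its interval-covering argument depends only on $\rho$-closeness to the piecewise affine map $F_0$ and on continuity, hence applies verbatim with $g$ ranging over $C(I)\supset\CP$, and with the preservation of $CP$ under conjugation by homeomorphisms.
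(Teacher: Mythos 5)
Your proposal is correct and follows essentially the same route as the paper: approximate $f\in\CP$ by a map in $CP$, conjugate into $C_\lambda$ via the homeomorphism coming from an invariant full-support measure, apply Lemma~\ref{lem:shadowingdense} with suitably decreased constants together with the observation that its tracing conclusion holds for all $g\in C(I)$ close to the piecewise affine map, and transfer back by uniform continuity of $\psi^{\pm1}$. The paper only sketches this transfer in the paragraph preceding the lemma, and your bookkeeping of the constants matches what that sketch intends.
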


In what follows, we will need a result by Chen \cite{Ch91}, which connects the following definition with the shadowing property for interval maps.
\begin{definition} Let $f \in C(I)$ and fix any $\eps>0$. A point $x\in I$ is {\em $\eps$-linked to a point $y\in I$ by $f$} if there exists an integer $m > 1$ and a point $z \in B(x, \eps)$ such that $f^m(z) = y$ and
$d(f(x), f(z)) <\eps$ for $0< j< m$.
We say {\em $x \in I$ is linked to $y \in I$ by $f$} if x is $\eps$-linked to $y$ by $f$ for every $\eps > 0$. We say  {\em $C\subset I$ is linked by $f$} if every point $c \in  C$ is linked to some point in $C$.
We say that $f$ has the linking property if the set $C$ consisting of critical points together with endpoints of $I$ is linked by $f$.
\end{definition}

The following result is a consequence of \cite{Ch91} and \cite{Parry}.
\begin{theorem}\label{thm:chen}
For a topologically mixing piecewise monotone map of the interval, the shadowing property and the linking  property are equivalent.
\end{theorem}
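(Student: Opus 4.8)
The plan is to reduce the statement to the case of a piecewise linear map of constant slope, where the equivalence is the content of Chen's work \cite{Ch91}, the reduction being furnished by Parry's normal form theorem \cite{Parry}.

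\textbf{Step 1: passing to a constant-slope model.} A topologically mixing interval map is transitive, hence lies in $CP$ (transitive interval maps have dense periodic points), and, being mixing on a nondegenerate interval, satisfies $f^2\neq\mathrm{id}$; by Corollary~\ref{cor:turbulence} this forces $\htop(f)>0$. A topologically transitive piecewise monotone interval map has no wandering intervals and no nondegenerate interval of periodic points, so the monotone semiconjugacy produced by Parry's construction \cite{Parry} is in fact an increasing homeomorphism $h\colon I\to I$ with $g:=h\circ f\circ h^{-1}$ piecewise linear of constant slope $\pm\beta$, where $\beta=\exp(\htop(f))>1$. Since mixing is a conjugacy invariant, $g$ is again a topologically mixing piecewise monotone map.

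\textbf{Step 2: conjugacy invariance of both properties.} Shadowing is a conjugacy invariant on a compact metric space, via uniform continuity of $h$ and $h^{-1}$. The same $\eps$--$\delta$ argument shows that the relation ``$x$ is linked to $y$ by $f$'' is carried by $h$ to ``$h(x)$ is linked to $h(y)$ by $g$'': given a target accuracy for $g$, pull it back through the uniform continuity of $h$, take a linking witness for $f$ of that finer accuracy, and push it forward through $h$. Moreover $h$ maps the turning points of $f$ bijectively onto those of $g$ and, being increasing, fixes $0$ and $1$, so it carries the distinguished set $C$ of $f$ onto that of $g$. Hence $f$ has the shadowing property iff $g$ does, and $f$ has the linking property iff $g$ does.

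\textbf{Step 3: the constant-slope case and conclusion.} For a piecewise linear interval map of constant slope of absolute value greater than one, Chen \cite{Ch91} proves that the shadowing property is equivalent to the linking property of the set consisting of the critical points together with the endpoints of $I$. Applying this to $g$ and combining with Step~2 gives the theorem.

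The essentially routine but load-bearing points sit in Steps~1 and~2: one must confirm that Parry's semiconjugacy is a genuine homeomorphism in our setting (this is exactly where transitivity, hence mixing, is used) and that conjugation by an \emph{increasing} homeomorphism respects the special role of the endpoints in the definition of the linking property. I expect the only real friction to be matching the precise hypotheses of \cite{Ch91} to the constant-slope model produced above; the analytic content is imported wholesale from that reference and no new estimates are required.
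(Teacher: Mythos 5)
Your argument is correct and is exactly the route the paper intends: Theorem~\ref{thm:chen} is stated there as a consequence of \cite{Ch91} and \cite{Parry}, i.e.\ pass to Parry's constant-slope conjugate model, note that shadowing and linking are preserved under conjugation by a homeomorphism of $I$, and invoke Chen's equivalence for the constant-slope map. No substantive difference from the paper's (citation-level) proof.
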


\begin{theorem}\label{thm:shadowing}
Shadowing is generic in $CP$ and in  $\CP$ and there is a dense set of maps in $CP$  which do not have the shadowing property.
\end{theorem}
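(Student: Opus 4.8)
The plan is to establish the two halves of Theorem~\ref{thm:shadowing} separately: genericity of shadowing (in both $CP$ and $\CP$), and density of maps without shadowing in $CP$ (hence, by Lemma~\ref{l1} and Corollary~\ref{cor:resinCPresinCPclosure}, also in $\CP$).

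\textbf{Genericity of shadowing.} First I would reduce $\CP$ to $CP$: since $CP$ is residual in $\CP$ by Lemma~\ref{l1}, a residual subset of $CP$ is residual in $\CP$ by Corollary~\ref{cor:resinCPresinCPclosure}, so it suffices to produce a dense $G_\delta$ set of maps with the shadowing property inside $CP$ (and the argument there also directly applies in $\CP$ via Lemma~\ref{lem:shadowingdense2}). For each $\eps>0$, let $\mathcal{S}_\eps$ be the set of $f\in CP$ for which there exists $\delta>0$ and an open neighborhood $V$ of $f$ such that every $g\in V$ has the property that every $\delta$-pseudo orbit for $g$ is $\eps$-traced (this is a reformulation that makes openness manifest). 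Then $\mathcal{S}_\eps$ is open by construction, and it is dense: given any $f\in CP$, apply Lemma~\ref{lem:shadowingdense2} to get $F\in CP$ with $\rho(f,F)<\eps/2$ and $\delta<\eps/2$ such that every $g\in \CP$ with $\rho(F,g)<\delta$ has all its $\delta$-pseudo orbits $\eps$-traced; so $F\in\mathcal{S}_\eps$ and $F$ is within $\eps$ of $f$. Finally $\bigcap_{n\ge 1}\mathcal{S}_{1/n}$ is a dense $G_\delta$ subset of $CP$, and every map in it has the shadowing property: given $\eps>0$ pick $n$ with $1/n<\eps$ and use the witnessing $\delta$ for $\mathcal{S}_{1/n}$. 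The same computation verbatim with $CP$ replaced by $\CP$ (still using Lemma~\ref{lem:shadowingdense2}, whose conclusion is stated for $g\in\CP$) gives genericity in $\CP$ directly.

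\textbf{Density of maps without shadowing in $CP$.} Here the plan is to use Theorem~\ref{thm:leo} to reduce to perturbing leo maps, then to use Theorem~\ref{thm:chen}: a topologically mixing (in particular leo) piecewise monotone map fails shadowing precisely when it fails the linking property, i.e., when some critical point or endpoint is not linked to any critical point or endpoint. So I would start from an arbitrary $f\in CP$, first approximate it by a piecewise affine leo map $f_0$ (Lemma~\ref{l5'}), and then perform a small, controlled perturbation—using window perturbations from Subsection~\ref{subsec:windowperturbations}—that (a) keeps the map piecewise affine, topologically mixing, hence in $CP$, but (b) introduces a new critical point $c$ whose forward orbit never comes $\eps_0$-close (for some fixed small $\eps_0$, for all small enough perturbation scale) to returning near any critical point or endpoint in the sense required by the linking definition, so that the linking property fails. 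Concretely, one can add a very shallow ``bump'' creating a turning point $c$ with critical value extremely close to an endpoint, say to $0$, while arranging (by making the new turning point's image land in a region where $f_0$ is expanding away from all critical points and endpoints) that no point $z\in B(c,\eps)$ has an orbit segment staying $\eps$-close to the orbit of $c$ and hitting $C$; alternatively, one can directly engineer $c$ with $f^j(c)$ bounded away from $C$ for the relevant window of times, contradicting $\eps$-linkedness for small $\eps$. By Theorem~\ref{thm:chen} the resulting map lacks shadowing, and it is within $\eps$ of $f$ and lies in $CP$.

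\textbf{Main obstacle.} The delicate part is the last construction: producing, arbitrarily close to a given leo map, a piecewise monotone topologically mixing map in $CP$ that provably \emph{violates} the linking property. Genericity of shadowing is essentially a soft consequence of the already-available Lemma~\ref{lem:shadowingdense2}, so it presents no real difficulty. By contrast, showing the failure of linking requires carefully tracking a critical point's orbit and verifying the quantitative non-linking condition (no nearby preimage with an orbit shadowing the critical orbit and landing in $C$) survives under the window-perturbation machinery while mixing (hence membership in $CP$) is preserved—this is where the technical bookkeeping, and the main risk of a gap, lies.
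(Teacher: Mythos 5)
Your genericity argument is sound and is essentially the paper's: the paper proves this half by running the Baire-category argument of Theorem~3 of \cite{BCOT} with Lemma~\ref{lem:shadowingdense} replaced by Lemma~\ref{lem:shadowingdense2}, which is exactly the open-dense scheme (your sets $\mathcal{S}_{1/n}$) you describe, and the transfer between $CP$ and $\CP$ is handled the same way. For the second half you also pick the paper's strategy (approximate by a piecewise affine leo map, destroy the linking property, invoke Theorem~\ref{thm:chen}), but the one step you yourself flag as the ``main risk of a gap'' is precisely the step you have not closed, and as proposed it does not work. Producing a new turning point $c$ with ``critical value extremely close to an endpoint'' and landing ``in a region where $f_0$ is expanding away from all critical points and endpoints'' gives you no control beyond finitely many iterates: to contradict $\eps$-linkedness for some fixed $\eps>0$ you need the \emph{entire} forward orbit of the unlinked point of $C$ to stay a definite distance $\eta>0$ from all critical points and endpoints (then Lipschitz continuity forces $f^m(z)\notin C$ for every admissible $z$ and $m$ once $\eps<\eta/L$). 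For a leo map a haphazardly placed critical value will typically have dense orbit, hence return arbitrarily close to $C$, and the non-linking claim collapses; note also that aiming the critical value near an endpoint is aiming it near a point of $C$ itself.

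The paper's device that fills this hole is different in detail and worth recording: starting from a piecewise affine leo $G\in C_\lambda$ near the conjugated map $F$, it uses leo-ness (density of preimages of any point) to perform a small $2$-fold window perturbation \emph{at the endpoint $0$} so that $0$ becomes eventually periodic with its periodic orbit $P$ disjoint from the turning points (and away from $\{0,1\}$). Then the tail of the orbit of the point $0\in C$ is uniformly bounded away from $C$, so $0$ is not linked to any point of $C$, the linking property fails, and Theorem~\ref{thm:chen} (applicable since the perturbation is small enough to keep the map piecewise affine, leo, hence mixing and in $CP$) yields failure of shadowing; conjugating back by $\psi^{-1}$ transports this to a map in $CP$ arbitrarily close to the original $f$. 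So your outline is recoverable, but you must replace the ``shallow bump'' heuristic by an explicit arrangement — eventual periodicity of a point of $C$ onto a periodic orbit avoiding the turning points and endpoints, obtainable from density of preimages — and verify quantitatively, via the Lipschitz bound above, that this forbids $\eps$-linking for small $\eps$.
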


\begin{proof}
The proof that shadowing property is generic follows exactly the same arguments as the proof of Theorem~3 in \cite{BCOT} with Lemma~\ref{lem:shadowingdense} replaced by Lemma~\ref{lem:shadowingdense2}. 

To see that shadowing property is not dense property in $\CP$, fix any map $f\in CP$ and conjugate it by a homeomorphism $\psi$ with a map $F\in C_\lambda$. Since by Remark~\ref{rem:leolambda} $C_\lambda$ contain an open and dense subset $U$  of leo maps and piecewise affine maps form a dense subset of $C_\lambda$, we  can choose a piecewise affine leo  map $G$ in $U$ which is in  an arbitrarily small neighborhood of $F$. Now, we can perturb  map $G$ by a $2$-fold perturbation in a small neighborhood of $0$
to a piecewise affine map $\hat G$ such that $0$ is eventually periodic with periodic orbit $P$ and $P$ is disjoint from the set of turning points of  $\hat G$. That is indeed possible, because the set of preimages of any point $p$ under $G$ is dense in $I$, since $G$ is leo. If the perturbation is sufficiently small then $\hat G$ remains in $U$ and so it is leo. But it is clear that $\hat G$ does not have the linking property, so also does not have the shadowing property by Theorem~\ref{thm:chen}. If $\hat G$  is sufficiently close to $F$ then its conjugate $g$ by $\psi^{-1}$  is sufficiently close to $f$.  Observe that by definition both $\hat G$ and $g$ belong to $CP$.
This proves the density of maps without  the shadowing  property, completing the proof.
\end{proof}

\begin{remark}
Using Lemma~\ref{lem:shadowingdense} instead of Lemma~\ref{lem:shadowingdense2} we obtain with analogous arguments as in the proof of Theorem~\ref{thm:shadowing} that there is a dense set of maps in $C_\lambda$ which do not have the shadowing property.
\end{remark}

\subsection{Periodic point structure in $\CP$}\label{subsec:periodicpoints}
For $f\in C(I)$ denote 
$$\fix(f,k) := \{x: f^k(x) = x\}$$
$$\per(f,k) := \{x : f^k(x) = x \text{ and } f^i(x) \ne x  \text{ for all } 1 \le i < k
\}$$
$$k(x) := k \text{ for } x \in \per(f,k)$$
and
$$\per(f) := \bigcup_{k \ge 1} \per(f,k) = \bigcup_{k \ge 1} \fix(f,k).$$

\begin{definition}\label{transverse} 
We call a periodic point $p \in \per(f,k) $ \emph{transverse} if  there exist three adjacent intervals
$A = (a_1,a_2), B = [a_2,c_1], C = (c_1,c_2)$, with $p \in B$, $B$ possibly reduced to a point,
so that (1)  $f^k(x) = x$ for all $x \in B$ and either (2.a) $f^k(x) > x$ for all $x \in A$ and $f^k(x) < x$ for all $x \in C$ or (2.b)  $f^k(x) <x$ for all $x \in A$ and $f^k(x) > x$ for all $x \in C$.
\end{definition}

\begin{theorem}\label{thm:periodicpoints}
	For a generic map $f \in \CP$ and  for each $k \geq 1$:
	\begin{enumerate}
		\item \label{pp1} the set $\fix(f,k)$ is a Cantor set, 
		\item  \label{pp2} $\per(f,k)$ is a relatively open dense subset of $\fix(f,k)$, 
		\item \label{pp3}  the set $\fix(f,k)$ has Hausdorff dimension and  lower box dimension zero. In particular, $\per(f,k)$ has Hausdorff dimension and lower box dimension zero. As a consequence, the Hausdorff dimension of $\per(f)$ is also zero.
		 \item \label{pp4} the set $\per(f,k)$ has upper box dimension one. Therefore, \\
  $\fix(f,k)$ has upper box dimension one as well.
	\end{enumerate}
\end{theorem}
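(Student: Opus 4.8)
The strategy is to build, for each $k\ge1$, a countable collection of open dense subsets of $\CP$ whose common refinement forces the four properties simultaneously; by Lemma~\ref{l1} (or Corollary~\ref{cor:resinCPresinCPclosure}) it suffices to work in $CP$ and in fact on the dense set of piecewise affine leo maps supplied by Lemma~\ref{l5'}, then pass to a residual set. The key perturbation tool is the window perturbation from Subsection~\ref{subsec:windowperturbations}: inside any chosen subinterval on which $f^k$ currently has a fixed segment or crosses the diagonal transversally, we can insert a regular $m$-fold window perturbation to replace a ``fat'' piece of $\fix(f,k)$ by many tiny well-separated pieces, while keeping the map leo and $CP$, and while keeping all previously installed transverse periodic behavior intact (transversality of periodic points is an open condition, so finitely many earlier requirements survive small perturbations). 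The whole argument is the interval-map analogue of the periodic-point genericity results in \cite{BCOT}, and I would follow those proofs closely.

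For \eqref{pp1} and \eqref{pp2}: first I would show that for a dense open set of maps every periodic point of period dividing $k$ is transverse in the sense of Definition~\ref{transverse} --- this uses a window perturbation to straighten any non-transverse crossing of $f^k$ with the diagonal, and transversality being open gives the open part. For such a map $\fix(f,k)$ is a finite union of points and closed intervals; a second family of dense open conditions (indexed by rational subintervals $J_m$) demands that $\fix(f,k)$ meet $J_m$ in a set of diameter $<1/m$ and with a gap inside $J_m$ --- again achieved by an $m'$-fold window perturbation on the relevant piece. Intersecting over all $m$ yields that $\fix(f,k)$ is closed, nowhere dense in itself... rather, has empty interior, and is perfect (no isolated points: any isolated fixed point of $f^k$ can be split into two by a window perturbation, and ``having another fixed point of $f^k$ within $1/m$ of each rational $J_m$-located fixed point'' is dense open), hence a Cantor set. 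The relative density and openness of $\per(f,k)$ inside $\fix(f,k)$ follows because the points of exact period $<k$ dividing $k$ form a closed subset and genericity (splitting) makes its complement dense; openness in $\fix(f,k)$ is automatic since $\per(f,j)\cap\fix(f,k)$ is closed for each proper divisor $j$.

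For \eqref{pp3} and \eqref{pp4}: zero Hausdorff and lower box dimension comes from imposing, for each $m$ and each $n$, the dense open condition that $\fix(f,k)$ can be covered by at most $n^{1/m}$ (say) intervals of length $\le n^{-1}$ --- equivalently, after straightening, $\fix(f,k)$ lives in a very small neighborhood built from very few, very short intervals, arranged by window perturbations whose pieces are chosen with rapidly shrinking total length; taking the conjunction over a suitable sequence forces lower box dimension $0$, and Hausdorff dimension $\le$ lower box dimension gives the rest, the union over $k$ being a countable union of Hausdorff-dimension-zero sets. For upper box dimension one of $\per(f,k)$ one instead imposes, for each $m$, the dense open requirement that $\per(f,k)$ be $(1/m)$-dense in $I$ (a single sufficiently fine regular window perturbation scatters period-$k$ transverse points throughout $I$); a set that is $\eps$-dense for all $\eps$ has upper box dimension $1$, and since $\per(f,k)\subset\fix(f,k)\subset I$ the same holds for $\fix(f,k)$. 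Finally one checks that all the listed dense open conditions are compatible, i.e., a single sequence of window perturbations with carefully chosen fold-numbers and lengths can meet ``few short covering intervals'' (for lower box) and ``$\eps$-dense'' (for upper box) at once --- this is exactly the phenomenon that box dimension need not equal Hausdorff dimension, and the construction in \cite{BCOT} already realizes it.

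The main obstacle I expect is the bookkeeping in \eqref{pp3}--\eqref{pp4}: arranging a \emph{single} residual set on which, for \emph{every} $k$ simultaneously, $\fix(f,k)$ is both box-dimension-one and Hausdorff/lower-box-dimension-zero requires the window perturbations installed for parameter $k$ not to destroy the delicate covering estimates previously secured for $k'<k$ (and vice versa, since later perturbations are smaller but act on finitely many scales already fixed). Handling this needs the perturbations to be localized away from the relevant pieces of $\fix(f,k')$ and small enough that the finitely many ``few short intervals cover $\fix(f,k')$'' estimates, being open conditions, persist --- precisely as in \cite{BCOT}, so I would cite that argument rather than redo it.
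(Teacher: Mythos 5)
Your overall framework (perturb a dense family of piecewise affine maps by regular window perturbations around transverse periodic points, then take a residual set, as in \cite{BCOT}) is the same as the paper's, and your treatment of \eqref{pp1}, \eqref{pp2} and the lower-box/Hausdorff part of \eqref{pp3} could be made to work along those lines. But your mechanism for \eqref{pp4} is genuinely wrong. You propose to impose, for each $m$, the ``dense open requirement that $\per(f,k)$ be $(1/m)$-dense in $I$''. First, this condition is not dense in $CP$: for $k=1$, any map uniformly close to $1-\mathrm{id}$ (which lies in $C_\lambda\subset CP$) has $f(x)-x$ close to $1-2x$, so all its fixed points lie near $1/2$; no small perturbation makes $\fix(f,1)$ even $1/10$-dense, so the Baire-category argument collapses. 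Second, the condition is incompatible with \eqref{pp3}: upper and lower box dimensions are unchanged under taking closures, so if $\per(f,k)$ were dense in $I$ its \emph{lower} box dimension would be $1$, contradicting the lower-box-dimension-zero statement you (correctly) aim to prove for $\fix(f,k)\supset\per(f,k)$. The coexistence of lower box dimension $0$ and upper box dimension $1$ cannot be obtained by spreading periodic points over $I$; it must come from a two-scale phenomenon. This is exactly what the paper does: each perturbed map $h_i$ has its period-$k$ points confined to $\ell_i$ orbits of tiny windows of half-length $a_i$ with $2a_i\le (k\ell_i)^{-i}$ (few intervals at the coarse scale $a_i$, giving lower box dimension $0$ along that subsequence of scales), while inside each window the regular $(2n_i+1)$-fold perturbation creates on the order of $(2n_i+1)^{k}$ fixed points of $h_i^k$ separated at the much finer scale $b_{l,i}=2a_i/(2n_i+1)^{k/k(x_{l,i})}$; choosing $n_i$ to grow fast relative to $1/a_i$ makes $\log N(2b_i)/\log(1/2b_i)\to 1$, which is what yields upper box dimension $1$ for maps in $B(h_i,\delta_i)$. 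Nothing in your proposal supplies this fine-scale multiplicity estimate.

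A secondary, repairable issue: you assert that ``transversality of periodic points is an open condition'' and build your argument from open dense sets such as ``all periodic points of period dividing $k$ are transverse''. In the $C^0$ topology this set is not open (a small perturbation near a transverse crossing of $f^k$ with the diagonal can create a tangency), and conditions like ``each fixed point has another fixed point within $1/m$'' are likewise not open, since non-transverse fixed points can be destroyed. What is stable is only the persistence of \emph{at least one} nearby periodic point for each transverse one, together with the inclusion $\fix(g,k)\subset B(\fix(h_i,k),a_i)$ for $g$ close to $h_i$. This is why the paper does not phrase the argument via abstract open dense conditions but works with the explicit dense sequence $(h_i)$ and the residual set $\bigcap_{j}\bigcup_{i\ge j}B(h_i,\delta_i)$, extracting all four properties (including perfectness of $\fix(f,k)$ and relative density of $\per(f,k)$) from quantitative counting and inclusion estimates valid uniformly on each ball $B(h_i,\delta_i)$. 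You would need to recast your ``open dense conditions'' in this form for the category argument to go through.
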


\begin{proof}
First note that  the last part of (\ref{pp3}) follows from the first part of (\ref{pp3}) since 
 $$\mathrm{dim}_H(\per(f)) \le \sup_{k \ge 1} \mathrm{dim}_H(\fix(f,k)) \le \sup_{k \ge 1} \underline{\mathrm{dim}}_{Box}(\fix(f,k)) = 0.$$

For the proofs of \ref{pp1}), \ref{pp2}), \ref{pp3}) and \ref{pp4}) it suffices to prove the result for a  fixed $k \in \N$.
 Fix a countable dense set of maps $\{f_i\}$ in $CP$. 
Let $\psi_i$ be a homeomorphism of $I$ such that 
$\hat{f}_i = \psi_i^{-1} \circ f_i \circ \psi_i$ belongs to $C_{\lambda}$. Let $\mathrm{PA}_{\lambda}$ denote the set of piecewise affine Lebesgue measure-preserving interval maps.
The set of maps in $\mathrm{PA}_{\lambda}$ having  
\begin{itemize}
\item[{a)}] no interval with slope $\pm 1$ and  
\item[{b)}]  having at least one periodic point of period $k$, and 
\item[{c)}]  all periodic points of period $k$ transverse
\end{itemize}
is dense in $C_{\lambda}$  (Definition 10 and Lemma 12 in \cite{BCOT}).
The set $\fix(\hat{g},k)$ is finite for such a map $\hat g$, since $\hat g$ does not have slope $\pm 1$.  
Thus we can choose $\hat{g_i} \in \mathrm{PA}_{\lambda}$  satisfying  a), b) and c) close enough to $\hat f_i$
and piecewise  affine homeomorphisms $\hat \psi_i$ of $I$ sufficiently close to  $\psi_i$ such that
\begin{itemize}
\item[(i)] each periodic point of $g_i$ is a point where  $\psi_i$ is differentiable  and 
\item[(ii)] the set of piecewise affine maps $\{g_i :=\hat  \psi_i \circ \hat g_i \circ \hat \psi_i^{-1}\}$ is 
a dense subset of $CP$. 
\end{itemize}
Notice that all points in  $\fix(g_i,k)$ are also transverse.
The advantage of transversality is that for each point in $\fix(g_i,k)$, there is at least one  corresponding periodic
point in $\fix(g,k)$ if the perturbed map $g$ is sufficiently close to $g_i$.

  After these modifications the rest of the proof  follows exactly the same arguments as the proof of Theorem~1 in \cite{BCOT}. For convenience we repeat and streamline the proof, correcting an error in the proof of \eqref{pp2} of Theorem~1 in \cite{BCOT} along the way.

Since $\gh_i \in  \mathrm{PA}_{\lambda}$ does not have slope $\pm 1$ the set $\fix(\hat{g}_i,k)$ is finite, and thus the set $\fix(g_i,k)$ is finite as well. Suppose it consists of $\ell_i$ disjoint orbits and  the set $\per(g_i,k)$ consists of $\bar \ell_i \le \ell_i$ distinct orbits.
In particular 
\begin{equation}\label{e-count}
\ell_i \le \# \fix(g_i,k) \le k \ell_i \text{  and  } \per(g_i,k) = k \bar \ell_i.
\end{equation}
Choosing one point  from each of the orbits  in $\fix(g_i,k)$ defines the set
 $\{x_{l,i}: 1 \le l \le \ell_i \} \subset \fix(g_i,k)$.  Let $k(x_{l,i})$ denote the minimal period of $x_{l,i}$.
Let $\hat{x}_{l,i} := \psi_i^{-1}(x_{l,i})$.

The construction in the proof depends on integers $n_i \ge 1$ which will be defined in the proof, for most of the estimates it suffices to have
$n_i =1$, but for the upper box dimension estimates we will need $n_i$ growing sufficiently quickly.
We consider a very small number $\hat a_i$ (several restrictions will be introduced progressively) and define new maps
$h_i := \psi_i \circ \hat{h}_i \circ \psi_i^{-1}$ where $\hat{\psi}_ i\in  \mathrm{PA}_{\lambda}$ is obtained by 
applying a regular $2n_i + 1$-fold window 
perturbation of $\hat{g}_i$ 
with respect to $\hat{I}_{l,i} := 
(\hat{x}_{l,i} - \hat a_i,\hat{x}_{l,i} + \hat a_i)$.
Finally set $I_{l,i} := \psi_i(\hat{I}_{l,i})$, and let $a_i$ be half the length of this interval.

We choose  $\hat a_i$  so small that $\{h_i\}$ satisfies the following properties:
\begin{enumerate}[i)]
\item $\psi_i|_{I_{l,i}}$ is affine.

\item  $2a_{i} \le (k \ell_{i})^{-i}$.\label{p5}
This implies $a_i \to 0$ and thus the collection $\{h_i\}_{i\geq 1}$ is dense in $CP$.

\item (Disjointness) 
$I_{l,i} \cap I_{l',i} = \emptyset$ if $l \ne l'$
and for all $1 \le l < \ell_i $ and all $0 \le j < k$ the
images $h_i^{j} (I_{l,i})$ are mutually disjoint,\label{p1} 

\item \label{p2}   (Regularity) For each $1 \le j \le k(x_{l,i})$ the map
$h^j_i$ restricted to the interval $I_{l,i}$ is composed of $2n_i+1$ full monotone branches
with widths equal to $2a_i/(2n_i + 1)$. This implies
\begin{enumerate}[a)]
\item \label{p2a}
 The map $h_i^{k(x_{l,i})}$  has exactly  $2n_i + 1$ fixed points
in the interval  $I_{l,i}$ and these points have period $k(x_{l,i})$,
\item \label{p2b} The map $h_i^k$ has
 $(2n_i+1)^{k/k(x_{l,i})}$ fixed points in this interval.
 \item \label{p2c}  The full branches of $h_i^{k/k(x_{l,i})}$ have width $b_{l,i} := 2a_i/((2n_i+1)^{k/k(x_{l,i})})$, thus
each subinterval of $I_{l,i}$ of length $2b_{l,i}$ contain at least one full branch and at most parts of three branches,  and thus at least one fixed point and at most 3 fixed points of $h_i^{k/k(x_{l,i})}$.
 \item\label{p2d}  The map $h_i$ has a point of period $k$ in each interval $I_{l,i}$.

 \item \label{p2e} The total number $N_{l,i}$ of fixed points of $h_i^k$ arising from the orbit of $x_{l,i}$  satisfies
$$ N_{l,i}   =  (2n_i + 1)^{k/k(x_{l,i})} k(x_{l,i}).$$
Summing over the points $x_{l,i}$  and using $1 \le k(x_{l,i}) \le k$ yields
$$\#  \fix(h_i,k) = \sum_{l=1}^{\ell_i} N_{l,i}   \le (2n_i + 1)^{k} k \ell_i.$$
Since we have $\ell_i$ different points $x_{l,i}$ and among them there is a fixed point of $g_i$, so there is at least one $l$ with $k(x_{l,i})=1$, yielding
$$\max( (2n_i + 1) \ell_i ,  (2n_i + 1)^k)
 \le \#  \fix(h_i,k).$$
 \end{enumerate}
\item new periodic points obtained by perturbations of each $x_{l,i} \in \per(g_i,k)$ have to
visit all $k$ disjoint intervals $h_i^j (I_{l_0,i})$  $(0 \le j < k)$.
Thus for each $x_{l,i} \in \per(g_i,k)$ the $N_{l,i} = (2n_i + 1)k$ points are
not only in $\fix(h_i,k)$ but also in  $\per(h_i,k)$; and so
 $\# \per(h_i,k) \ge (2n_i + 1)k \bar \ell_i$. 
\end{enumerate}

Consider $\delta_i > 0$ and
 $$\mathcal{G}:= \bigcap_{j \in\N} \bigcup_{i \ge j} B(h_i ,\delta_i),$$
 where the ball is taken in $\CP$.
 By construction the set $\mathcal{G}$ is a dense $G_{\delta}$ subset of $\CP$,  by Lemma~\ref{l1} the set $\mathcal{G} \cap CP$ is a $G_\delta$ subset of $CP$.
 
 {\it (\ref{pp1})} First we claim that if $\delta_i > 0$ goes to zero sufficiently quickly then $\fix(f,k)$ is a Cantor set for each $f \in \mathcal{G}$.  
 
By its definition the set $\fix(f,k)$ is 
closed for any continuous map $f$.
The set $\fix(h_i,k)$ is finite.
We choose $\delta_i$ so small that if $g \in B(h_i,\delta_i)$ then $\fix(g,k) \subset  B(\fix(h_i,k),a_i)$;  in particular the set
$\fix(g,k)$ can not contain an interval whose length is longer than $2a_i$.
Suppose $f \in \mathcal{G}$,  thus $f \in B(h_{i_j},\delta_{i_j})$  for some subsequence $i_j$.
Since $a_{i _j}\to 0$ the set $\fix(f,k)$ can not contain an interval.

It remains to show that 
there are no isolated points in $\fix(f,k)$. 
By (ii\ref{p2b} each interval $I_{l,i}$ contains exactly
$ (2n_i+1)^{k/k(x_{l,i})}$ points of $\fix(h_i,k)$.
 We choose $\delta_i$ so small
that  
 for each $g \in B(h_i,\delta_i)$
each interval $I_{l,i}$ contains at least $ (2n_i+1)^{k/k(x_{l,i})}$ points of $\fix(g,k)$  and
 $\fix(g,k) \subset \cup_{l,0 \le j \le k(x_{l,i})} h_i^j(I_{l,i})$.
Now we apply  this to $f \in \mathcal{G}$ and $x \in \fix(f,k)$. Let $l_j$ be such that $x \in I_{{l_j},i_j}$. Each of these intervals contains other points of $\fix(f,k)$, and their
lengths tend to zero, which show that $x$ is not isolated and 
 completes the proof of \eqref{pp1}.

{\it (\ref{pp2})} Fix $f\in \mathcal{G}$  
and an $i$ such that 
$f \in B(h_i,\delta_i)$. 
As we already saw in the proof of  \eqref{pp1}  we have  $\fix(f,k) \subset  B(\fix(h_i,k),a_i)$. 

But (ii\eqref{p2d} implies that
$\fix(h_i,k)\subset B(\per(h_i,k),2a_i)$.
Combining these two facts yields 
$\fix(f,k) \subset  B(\fix(h_i,k),a_i) \subset B(\per(h_i,k),3a_i)$.

Next we claim that $\delta_i$ can be chosen so that $\per(h_i,k) \subset B(\per(f,k),a_i)$. Once the claim is proven we will have shown that $\per(f,k)$ is dense in $\fix(f,k)$.

Condition  (ii\ref{p2a} tells us that  if $x_{l,i} \in \per(h_i,k)$  then the $(2n_i+1)$ fixed points of $h_i^k$ in the interval $I_{l,i}$   all have period $k$.
We have chosen $\delta_i$ so small
that each $I_{l,i}$ contains at least as many points of $\fix(g,k)$  for any $g \in B(h_i,\delta_i)$ and
 $\fix(g,k) \subset \cup_{l,0 \le j \le k(x_{l,i})} h_i^j(I_{l,i})$.
 We additionally require that $\delta_i$ is so small that  a similar statements hold for periodic points, namely that 
 if $k(x_{l,i}=k$ then each $I_{l,i}$ contains at least as many points of $\per(g,k)$  for any $g \in B(h_i,\delta_i)$ and 
  $\per(g,k) \subset \cup_{l : k(l,i) = k ,0 \le j \le k} h_i^j(I_{l,i})$.
This implies $\per(h_i,k) \subset  B(\per(g,a_i)$ as well, and finishes the proof of the claim.

Finally notice that $\per(f,k)= \fix(f,k)\setminus \cup_{1 \le \ell < k: \ell | k} \fix(f,\ell)$.
But this finite union is closed, thus $\per(f,k)$ is relatively open subset of $\fix(f,k)$. This completes the proof of \eqref{pp2}.

{\it \ref{pp3})}  It suffices to prove the lower box dimension statement for $\fix(f,k)$, the other statements follow since  $\per(f,k) \subset \fix(f,k)$ and  since the Hausdorff dimension
of a set is smaller than its lower box dimension.
We will show  that if  $\delta_i > 0$ goes to zero sufficiently quickly   then
the lower  box dimension of $\fix(f,k)$ is zero for any $f \in  \mathcal{G}$.

Consider the open cover $\mathcal{C}_i := \{(x-a_i,x+a_i): x \in \fix(h_i,k)\}$ of $\fix(h_i,k)$, notice that
the intervals are pairwise disjoint by assumption (\ref{p1} and that $\mathcal{C}_i$  covers the domain $\{I_{l,i}: 1 \le l \le l_i\}$ of our perturbations.
Choose $\delta_{i}$ sufficiently small so that  for each $f \in B(h_i,\delta_{i})$ the collection $\mathcal{C}_i$ covers the  set
 $\fix(f,k)$.

Fix $f \in \mathcal{G}$, thus $f \in B(h_{i_j},\delta_{i_j})$  for some subsequence $i_j$.
Let $N(\eps)$ denote the number of intervals of length $\eps > 0$ needed to cover $\fix(f,k)$, combining the previous discussion with   
Equation \eqref{e-count} yields
$N(2a_{i_j}) \le k \ell_{i_j} = \# \mathcal{C}_i$.  Applying (\ref{p5} yields
$$\frac{\log (N(2a_{i_j})) }{\log(1/2a_{i_j})}  \le  \frac{ \log( k\ell_{i_j})}{\log(1/2a_{i_j})} \le \frac{1}{i_j}$$
and thus the lower box dimension of $\fix(f,k)$
defined by
$$\liminf_{\eps \to 0} \frac{\log (N(\eps)) }{\log(1/\eps)}$$
is 0.

{\it (\ref{pp4})}  Instead of covering $\fix(h_i,k)$ by intervals of length $a_i$  we cover it by intervals of length $2b_i$.  
By (ii\ref{p2c} each such interval covers at most three points of $\fix(h_i,k)$.
Thus we need at least $(\# \fix(h_i,k))/3$ such intervals to cover $\fix(h_i,k)$; so by (ii\ref{p2e} we need
 at least $(2n_i+1)^{k}/3$ such intervals to cover $\fix(h_i,k)$.
Fix such a covering and  choose $\delta_i$ sufficiently small so that all periodic points  of period $k$ of any $f \in B(h_{i},\delta_{i})$ are contained in  the covering intervals.

Thus 
\begin{equation}\label{ubd}
\frac{\log (N(2b_{i})) }{\log(1/2b_{i})} \ge  \frac{ \log((2n_i+1)^k/3)}{\log(1/2b_{i})} 
=   \frac{ \log((2n_i+1)^k) - \log(3)}{\log((2n_i+1)^k) - \log(2a_i)}.
\end{equation}

The sequence $a_i$ has been fixed above, 
if $n_i$ grows sufficiently quickly the last term in \eqref{ubd} approaches one. 
We can not cover  $\fix(h_i,k) $ by fewer intervals, and thus we can not cover $\fix(f,k)$ by fewer intervals for any $f \in B(h_{i},\delta_{ i})$. 

By the above discussion, if we fix any $f\in \mathcal{G}$ then $f\in B(h_{i_j},\delta_{i_j})$ for some subsequence $i_j$ and therefore,
 the upper box dimension of $\fix(f,k)$
defined as 
$$\limsup_{\eps \to 0} \frac{\log (N(\eps)) }{\log(1/\eps)}$$
is 1.
\end{proof}

\begin{remark}
The statements of Theorem~\ref{thm:periodicpoints} also hold for generic maps in $CP$ by Lemma~\ref{l1}.
\end{remark}

\begin{theorem}\label{t-PP}
	The set of leo maps in $CP$
	whose periodic points have full Lebesgue measure and whose periodic points of period $k$ have positive measure for each $k \ge 1$
	is dense in $CP$.
\end{theorem}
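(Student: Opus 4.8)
The plan is to produce, for each $f\in CP$ and each $\eps>0$, a single leo map $g\in CP$ with $\rho(f,g)<\eps$, $\lambda(\per(g))=1$ and $\lambda(\per(g,k))>0$ for every $k\ge 1$; density of the desired set follows at once. Note that, unlike in the measure preserving setting, Lebesgue measure is not a conjugacy invariant, so one cannot transport such a statement from $C_\lambda$ through a conjugacy $\psi$ as in Remark~\ref{rem:psihomeo}; we therefore argue directly. By Lemma~\ref{l5'} we may first replace $f$ by a piecewise affine leo map, so from now on $f$ is piecewise affine and leo; in particular $f$ is Lipschitz, onto, topologically mixing, and it has periodic points of every period and dense sets of preimages.

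The construction of $g$ is recursive and produces a pairwise disjoint family $\{A_r\}_{r\ge 1}$ of ``frozen'' sets, each $A_r$ a closed nowhere dense set which is a periodic orbit of a fat Cantor set, i.e. $A_r=C_r\cup g(C_r)\cup\ldots\cup g^{p_r-1}(C_r)$ with the pieces disjoint, $\diam C_r$ tiny, and $g^{p_r}|_{C_r}=\id$, so that $A_r\subset\per(g,p_r)$ and $\lambda(A_r)>0$. At step $r$ one fixes a period $p_r$ (choosing the sequence $(p_r)$ so that every $k\ge 1$ occurs, possibly many times) and a period-$p_r$ point of $f$, takes for $C_r$ a fat Cantor subset of a Bowen-type neighbourhood of that point which is thin enough that its first $p_r$ iterates under $f$ stay within $\eps/2$ of closing up and remain disjoint from all previously frozen sets and their relevant backward iterates; one then lets $g$ follow $f$ to within $\eps$ along the first $p_r-1$ pieces and closes the orbit by a correction of size $<\eps$ on the last piece, while on the complementary intervals of $C_r$ inside the neighbourhood $g$ is made strongly expanding and overshooting so that the leo property survives. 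The measures $\lambda(A_r)$ are arranged so that $\sum_r\lambda(A_r)=1$: a period-$p$ frozen orbit can be given measure of order $1/p$, $\sum_p 1/p$ diverges, and several orbits of each period are permitted, so there is ample room. The complement $\Omega:=[0,1]\setminus\bigcup_r A_r$ is then a dense $G_\delta$ of measure zero on which $g$ is already determined by continuity, so $g$ is defined on all of $[0,1]$.

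It remains to verify the required properties. That $\lambda(\per(g))=1$ and $\lambda(\per(g,k))>0$ for each $k$ is built in, since $\bigcup_r A_r$ has full measure and each $A_r\subset\per(g,p_r)$ has positive measure. That $\rho(f,g)<\eps$ holds because $g=f$ on the ``following'' pieces, $|g-f|<\eps$ on the correction pieces, and hence, $\bigcup_r A_r$ being dense, $|g-f|\le\eps$ on $\Omega$ as well by continuity of $f$ and $g$. Finally $g$ is leo: since a leo map has no proper forward invariant interval, each set $C_r$, hence each $\fix(g,k)$, is forced to have empty interior, so every interval $J$ meets the complement of $\bigcup_{r\le N}A_r$ in an open interval on which $g$ was built to overshoot; as the boundary points of each $C_r$ are one-sided repelling by construction, no interval is absorbed into a frozen orbit, and one checks that the controlled overshooting present at all scales forces $g^n(J)=[0,1]$ for some $n$. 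Being leo, $g$ has dense periodic points, so $g\in CP$. The main obstacle is exactly this last point: reconciling the leo property, which demands expansion on every interval, with periodic points of full Lebesgue measure, which demands rigid (identity-on-an-iterate) behaviour on a full measure set. This is possible only because each individual $\fix(g,k)$ can be kept nowhere dense while their union $\per(g)$ has full measure, and making it work requires the frozen Cantor orbits to be placed with vanishing ``dynamical thickness'', so that closing them up and interleaving the expansion needed for leo costs less than $\eps$ in the uniform metric.
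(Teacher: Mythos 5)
There is a genuine gap, and it sits exactly where you park it at the end: the leo property of the limit map. Your construction freezes countably many fat Cantor orbits $A_r$ whose union has full measure, and then asserts that ``controlled overshooting present at all scales forces $g^n(J)=[0,1]$''. This is the entire difficulty, not a verification to be waved at: a uniform limit of leo maps need not be leo, and here every interval $J$ eventually meets frozen sets on a set of full relative measure, so all expansion must be extracted from the gaps of Cantor sets whose structure becomes arbitrarily fine as $r\to\infty$. One needs a quantitative mechanism built into the construction (e.g.\ every interval either covers a member of a fixed finite family of cells under boundedly many iterates or grows by a definite factor, with full branches from gap intervals onto macroscopic cells), and this has to be proved for the limit map, not just for each finite stage. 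Related unproved points: the limit $g$ is only ``determined by continuity'' on the measure-zero residual set if the recursive modifications converge uniformly and the partial maps are equicontinuous, which requires summable control of the successive perturbations (and the same control is needed to keep $\rho(f,g)<\eps$ after infinitely many corrections); and the measure bookkeeping is off --- the frozen measure of a single orbit is limited by the tiny diameters forced by disjointness from all previously frozen sets and by the closing estimate, not by anything of order $1/p$, so you need infinitely many orbits per stage and an argument that the remaining measure can really be driven to $0$ while all other constraints are maintained. As written, the proposal is a plausible programme (essentially a non-measure-preserving re-proof of Theorem~2 of \cite{BCOT}), not a proof.

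Moreover, the premise that launches the direct construction is mistaken. You are right that full Lebesgue measure of $\per(\cdot)$ is not invariant under conjugation by the homeomorphism $\psi$ of Remark~\ref{rem:psihomeo}, but the paper's proof transports the known $C_\lambda$ statement anyway: given $\alpha\in CP$, conjugate it into $C_\lambda$ by $\psi(x)=\mu([0,x])$, invoke \cite[Theorem 2]{BCOT} to get a leo $\gamma\in C_\lambda$ close to the conjugate with $\lambda(\per(\gamma))=1$ and $\lambda(\per(\gamma,k))>0$ for all $k$, and then conjugate back not by $\psi$ but by a \emph{piecewise affine} homeomorphism $\ell$ with $\rho(\psi,\ell)+\rho(\psi^{-1},\ell^{-1})$ small. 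Since $\ell$ is piecewise affine it preserves the classes of full-measure and positive-measure sets, $\per(\ell^{-1}\circ\gamma\circ\ell,k)=\ell^{-1}(\per(\gamma,k))$, leo is a conjugacy invariant, and $\ell^{-1}\circ\gamma\circ\ell$ is close to $\alpha$ when the approximations are fine enough. This two-line fix (approximate the conjugacy by a piecewise affine one) is what your argument overlooks, and it replaces the entire delicate limit construction.
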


\begin{proof}Fix a map $\alpha\in CP$. By \cite[Remark, p. 2536]{BCOT} the map  $\alpha$ is from $C_{\mu}$ for some nonatomic probability measure $\mu$ with $\supp~\mu=I$. Moreover, the map $\psi\colon~I\to I$ defined by $\psi(x)=\mu([0,x])$ is a homeomorphism for which $\beta=\psi\circ\alpha\circ \psi^{-1}\in C_{\lambda}$. By \cite[Theorem 2]{BCOT} the statement of Theorem~\ref{t-PP} is true in $C_{\lambda}$.  So there exists a map $\gamma\in C_{\lambda}$ for which  \begin{align}\label{a:11}&\rho(\beta,\gamma)<\varepsilon_1,~\lambda(\per(\gamma,k))>0~\text{ for each }k,\\&\lambda(\per(\gamma))=\lambda(\bigcup_{k\ge 1}\per(\gamma,k))=1.\nonumber
\end{align}
Obviously there is a piecewise affine homeomorphism $\ell\colon~I\to I$ such that \begin{equation}\label{e:11}\rho(\psi,\ell)+\rho(\psi^{-1},\ell^{-1})<\varepsilon_1.
\end{equation}

Choose $\varepsilon_2>0$ arbitrary. If $\varepsilon_1$ is sufficiently small, from (\ref{a:11}) and (\ref{e:11}) one has for $\delta=\ell^{-1}\circ \gamma\circ \ell$, $\per(\delta,k)=\ell^{-1}(\per(\gamma,k))$ for each $k$ hence also $\per(\delta)=\ell^{-1}(\per(\gamma))$,
\begin{align*}\rho(\alpha,\delta)=\rho(\psi^{-1}\circ\beta\circ \psi,\ell^{-1}\circ\gamma\circ\ell)<\varepsilon_2,\end{align*}
and since $\ell$ is piecewise affine, 
$$
\lambda(\per(\delta,k))>0\text{ for each}~k  \text{ and } \lambda(\per(\delta))=1. 
$$
\par \vspace{-2.2\baselineskip}  \qedhere
\end{proof}

For completeness we also show the following proposition which is well known in many situations. Let 
 $CP_{entr=\infty}$ denote the set of maps from $CP$ with topological entropy $\infty$.

\begin{proposition}\label{prop:entropyinfty}
The set of maps $CP_{entr=\infty}$ is a residual set in $CP$ and thus in $\CP$.
\end{proposition}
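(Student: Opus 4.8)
The plan is to realize $CP_{entr=\infty}$ as a set containing a dense $G_\delta$ subset of $CP$, assembled from countably many ``large horseshoe'' conditions, and then to transfer residuality to $\CP$ via Corollary~\ref{cor:resinCPresinCPclosure}. For $N\in\N$ say that $f\in C(I)$ has a \emph{robust $N$-horseshoe} if there are closed intervals $J_1<\cdots<J_N$ in $(0,1)$ with pairwise disjoint interiors such that $\min_{J_i}f<\min J_1$ and $\max_{J_i}f>\max J_N$ for every $i$; equivalently, $f(J_i)\supsetneq[\min J_1,\max J_N]\supseteq\bigcup_j J_j$ for all $i$. Two facts are immediate: by the classical horseshoe lemma for interval maps such an $f$ has $h_{\mathrm{top}}(f)\ge\log N$ (the $N$ intervals yield a subsystem semiconjugate onto the full $N$-shift); and, since the defining inequalities are strict and finite in number, $G_N:=\{f\in CP: f \text{ has a robust }N\text{-horseshoe}\}$ is open in $CP$. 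Hence $\bigcap_{N\ge1}G_N$ is a $G_\delta$ set contained in $CP_{entr=\infty}$, and it suffices to prove that each $G_N$ is dense in $CP$: then $\bigcap_N G_N$ is a dense $G_\delta$ in the Baire space $CP$, so $CP_{entr=\infty}$ is residual in $CP$, and in $\CP$ by Corollary~\ref{cor:resinCPresinCPclosure}.

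To prove density of $G_N$, fix $f\in CP$ and $\eps>0$ and follow the usual reduction: pass to the conjugate $\bar f=\psi^{-1}\circ f\circ\psi\in C_{\lambda}$ with $\psi(x)=\mu([0,x])$, $\mu$ an invariant non-atomic full-support measure (\cite[Remark p.~2]{BCOT}), and approximate $\bar f$ uniformly by a piecewise affine $\hat f\in C_{\lambda}$ with no interval of slope $\pm1$ (such maps are dense in $C_{\lambda}$; see Definition~10 and Lemma~12 of \cite{BCOT}). Since $\hat f$ preserves $\lambda$, almost every point is recurrent, which excludes $\hat f(x)>x$ throughout $(0,1)$ and $\hat f(x)<x$ throughout $(0,1)$; therefore $\hat f$ has a fixed point $p\in(0,1)$, and after an arbitrarily small further perturbation within this class we may assume $p$ lies in the interior of an affine branch of $\hat f$ of slope $s$. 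Measure preservation forces $|s|\ge1$, hence $|s|>1$, so for every sufficiently small $\delta>0$ the interval $J:=[p-\delta,p+\delta]\subset(0,1)$ satisfies $\hat f(J)\supsetneq J$, the containment being strict. Producing this self-covering interval --- equivalently, an interior fixed point whose local slope has modulus exceeding $1$ --- is the one genuinely delicate point; the rest is bookkeeping.

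Finally, let $\hat g$ be an $m$-fold window perturbation of $\hat f$ on $J$ (Subsection~\ref{subsec:windowperturbations}) with $m=2N-1$ and any partition $\{J_1,\dots,J_m\}$ of $J$. Then $\hat g\in C_{\lambda}\subseteq CP$, and since each $\hat g|_{J_i}$ is an affinely scaled copy of $\hat f|_J$ we have $\hat g(J_i)=\hat f(J)\supsetneq J\supseteq\bigcup_j J_j$ for every $i$; thus $\{J_i\}$ is a robust $m$-horseshoe for $\hat g$, whence $h_{\mathrm{top}}(\hat g)\ge\log m\ge\log N$. Because $\rho(\hat g,\hat f)\le\diam\hat f(J)\to0$ as $\delta\to0$ and conjugation by the fixed (uniformly continuous) homeomorphism $\psi$ is $\rho$-continuous, the map $g:=\psi\circ\hat g\circ\psi^{-1}\in CP$ satisfies $\rho(g,f)<\eps$ once $\hat f$ is chosen close enough to $\bar f$ and $\delta$ is small; and $g$ inherits the robust $N$-horseshoe $\{\psi(J_i)\}_{i\le N}$, since strict over-covering is preserved by conjugation with a homeomorphism. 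Hence $g\in G_N$, so $G_N$ is dense in $CP$, completing the proof.
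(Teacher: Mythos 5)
Your proposal follows essentially the same route as the paper: create, by a window perturbation near a suitable fixed point of a $C_\lambda$-conjugate, a horseshoe whose over-covering is strict and hence persists on an open neighbourhood, conclude that the sets $G_N$ of maps with entropy at least $\log N$ contain open dense subsets of $CP$, intersect over $N$, and transfer to $\CP$ via Lemma~\ref{l1}/Corollary~\ref{cor:resinCPresinCPclosure}. The paper phrases the openness as ``horseshoes are stable under perturbation'' at a \emph{transverse} fixed point, while you make it explicit through the strict inequalities defining your robust $N$-horseshoe and obtain the expansion from slopes of modulus greater than one of a piecewise affine approximation; these are cosmetic differences, and your bookkeeping (recurrence forces an interior fixed point, measure preservation forces $|s|\ge 1$, hence $|s|>1$ in the chosen class) is correct.

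The one step you assert without proof is exactly the one you yourself call delicate: ``after an arbitrarily small further perturbation within this class we may assume $p$ lies in the interior of an affine branch.'' This cannot be skipped, because a fixed point sitting at a breakpoint which is a local extremum of $\hat f$ (left slope $>1$, right slope $<-1$, say) gives $\hat f([p-\delta,p+\delta])=[p-\max(|s_1|,|s_2|)\delta,\,p]$, which does \emph{not} cover $[p-\delta,p+\delta]$, and nothing you have said rules out that every interior fixed point of $\hat f$ is of this type (a unimodal-type $C_\lambda$ map whose apex lies on the diagonal is a legitimate approximant). The gap is short to fill: either invoke, with $k=1$, the density in $C_\lambda$ of piecewise affine maps all of whose fixed points are transverse (Definition 10 and Lemma 12 of \cite{BCOT}, already quoted in the proof of Theorem~\ref{thm:periodicpoints}), noting that at a transverse fixed point the two-sided strict over-covering of a small neighbourhood holds regardless of where the breakpoints are; or perform one additional small $3$-fold window perturbation on a tiny interval around the bad apex, which produces a fixed point in the interior of a branch of slope $3s_1>1$. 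With either repair your argument is complete and matches the paper's.
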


\begin{proof}
 Every map $f\in CP\setminus \{\mathrm{id}\}$ has a fixed point $b$ where the graph of $f$ is transverse to the diagonal at $b$. This is indeed true since it holds for maps in $C_{\lambda}\setminus \{\mathrm{id}\}$ and any map $f\in CP$ is conjugate to a map $\hat f=\psi^{-1}\circ f\circ \psi\in C_{\lambda}$; therefore also $f\in CP\setminus \{\mathrm{id}\}$ have a transverse fixed point. 
 Using an $(n + 2)$-fold window perturbation on a neighborhood of $b$, and again passing through $C_{\lambda}$, we can create a map $g\in CP$ arbitrarily close to $f$ with a horseshoe with entropy $\log(n)$ in the window. Since horseshoes are stable under perturbations around stable fixed points, there is an open ball $B(g, \delta)$ in $CP$ such that each $h$ in this ball has topological entropy at least $\log(n)$ for any $n\geq 1$. The result also hold for $\CP$ by Lemma~\ref{l1}.
\end{proof}

\section{Acknowledgments}
We would like to thank Udayan B. Darji for his comments and suggestions which helped improving the exposition of the article.
 J. \v Cin\v c was partially supported by the Slovenian research agency ARIS grant J1-4632 and by the European Union's Horizon Europe Marie Sk\l odowska-Curie grant HE-MSCA-PF- PFSAIL - 101063512. J. Bobok was supported by the European Regional Development Fund, project No.~CZ 02.1.01/0.0/0.0/16\_019/0000778.
 P. Oprocha was supported by National Science Centre, Poland (NCN), grant no. 2019/35/B/ST1/02239.
 
 \vspace{0.7cm}

\begin{table}[ht]
\begin{tabular}[t]{p{1.5cm}  p{12cm} }
\includegraphics [width=.09\textwidth]{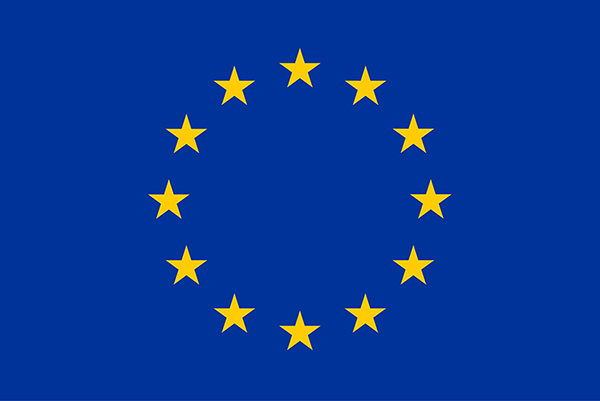} &
\vspace{-1.3cm}
This research is part of a project that has received funding from the European Union's Horizon Europe research and innovation programme under the Marie Sk\l odowska-Curie grant agreement No 101063512.\\
\end{tabular}
\end{table}


\begin{thebibliography}{99}

 \bibitem{ABL} S. J. Agronsky, A. M. Bruckner, M. Laczkovich,
\textit{Dynamics of typical continuous functions}
J. London Math. Soc. {\bf 40} (1989), 227--243.

\bibitem{BM}M.\ Barge, J.\ Martin, \textit{Dense periodicity on the interval}, Proc. Amer. Math. Soc. \textbf{94(4)} (1985), 731--735.
\bibitem{BC} L. Block, E. M. Coven, \textit{Maps of the interval with every point chain recurrent},  Proc. Amer. Math. Soc.  {\bf 98(3)} (1986),
513--515.

\bibitem{BD} N. C. Jr. Bernardes, U. B. Darji, \textit{Graph theoretic structure of maps of the Cantor space.} Adv. Math. {\bf 231(3)} (2012), 1655--1680.


\bibitem{Blokh} A.\ M.\ Blokh, \textit{Decomposition of dynamical systems on an interval}, (Russian) Uspekhi Mat. Nauk {\bf 38} (1983), 179--180.

\bibitem{BT} J.\ Bobok, S.\ Troubetzkoy,  {\it Typical properties of interval maps preserving the Lebesgue measure} Nonlinearity \textbf{33} (2020), 6461--6501.

\bibitem{BCOT} Bobok, J.\, \v Cin\v c, J.\, Oprocha, P.\, Troubetzkoy, S.\, {\it Periodic points and shadowing for generic Lebesgue measure preserving interval maps}, Nonlinearity \textbf{35} (2022) 2535--2558.


\bibitem{Buzzi} J. Buzzi,  \textit{Specification on the interval.} Trans. Amer. Math. Soc. \textbf{349(7)} (1997), 2737--2754.

\bibitem{Ch91} L.\ Chen, \textit{Linking and the shadowing property for piecewise monotone maps.} Proc.\ Amer.\ Math.\ Soc.\ {\bf 113(1)} (1991), 251--263. 

\bibitem{CO} J. \v Cin\v c, P. Oprocha,  {\it Pseudo-arc as the attractor in the disc - topological and measure-theoretic aspects}, Proc. Lond. Math. Soc. \textbf{125}, (2022), 318-357.

\bibitem{CoHe80}E. M. Coven, G. A. Hedlund, \textit{$\overline{P}=\overline{R}$ for maps of the interval}, Proc. Amer. Math. Soc. \textbf{79} (1980), 316--318.

\bibitem{CM} E.\ Coven, I.\ Mulvey, {\it  Transitivity and the centre for maps of the circle},
Ergod.\ Th.\ Dynam.\ Sys.\ \textbf{6(1)} (1986), 1--8.


\bibitem{DGS} M. Denker, C. Grillenberger, K. Sigmund, Ergodic theory on compact spaces. 
Lecture Notes in Mathematics, Vol. 527. Springer-Verlag, Berlin-New York, 1976. 

\bibitem{Devaney} R. L. Devaney, An Introduction to Chaotic Dynamical Systems, 2nd ed., Addison- Wesley, 1989.

 \bibitem{Ete} M. Etedadialiabadi, {\it Generic behavior of a measure preserving transformation} Ergodic
Theory Dyn. Syst. {\bf 40}, 904--922 (2020).

\bibitem{FW} M. Foreman, B. Weiss, {\it An anti-classification theorem for ergodic measure preserving
transformations.} J. Eur. Math. Soc. 6, 277--292 (2004).

\bibitem{SG} S. Guo, Invariant Descriptive Set Theory, 1st ed., Chapman \& Hall, 2008.

\bibitem{HM} R. C. Haworth,  R. A. McCoy, Baire Spaces, Warszawa: Instytut Matematyczny Polskiej Akademi Nauk, 1977.

 \bibitem{HLY} Z. He, J. Li, Z. Yang, {\it The topological structure of function space of transitive maps}, Top. Appl. {\bf 275} (2020) 107009, 15 pp.

\bibitem{HY} J. G. Hocking, G. S. Young, Topology, 2nd edition, Dover Publications, Inc., New York, 1988.


\bibitem{Kechris} A. S. Kechris, Classical descriptive set theory, Grad. Texts in Math., 156
Springer-Verlag, New York, 1995, xviii+402 pp.

 \bibitem{Kech} A. S. Kechris,  Global Aspects of Ergodic Group Actions, Mathematical Surveys and Monographs, vol. 160. American Mathematical Society (2010).

\bibitem{KR} A.S. Kechris, C. Rosendal, \textit{Turbulence, analgamation, and generic automorphisms of homogeneous structures}, Proc. Lond.
Math. Soc. \textbf{94(2)} (2007), 302--350.

\bibitem{KMS} S. Kolyada, M. Misiurewicz, L'. Snoha, {\it Spaces of transitive interval maps}, Ergodic Theory Dynam. Systems {\bf 35(7)} (2015),  2151--2170.

\bibitem{KO} M. Kulczycki, P. Oprocha, {\it Properties of dynamical systems with the asymptotic average shadowing property}, Fund. Math. {\bf 212} (2011), 35--52.

\bibitem{Nadler} S. B., Jr., Nadler, Continuum theory, Monogr. Textbooks Pure Appl. Math., {\bf 158}
Marcel Dekker, Inc., New York, 1992, xiv+328 pp.

\bibitem{Parry}
 W.\ Parry, {\it Symbolic dynamics and transformations of the unit interval}, Trans.\ Amer.\ Math Soc.\ \textbf{112} (1966), 368--378.

\bibitem{R} S.\ Ruette, {\it  Chaos on the interval}, University Lecture Series, 67. American Mathematical Society, Providence, RI, 2017. xii+215 pp.

\bibitem{Sol} S. Solecki,  {\it Generic measure preserving transformations and the closed groups they generate} Invent. Math. (2023) {\bf 231} 805--850.

\bibitem{sriva} S. M. Srivastava, A course on Borel sets. Graduate Texts in Mathematics, 180. Springer-Verlag, New York, 1998. 



\end{thebibliography}
\end{document}